\theoremstyle{definition}
\theoremstyle{theorem}
\newtheorem{theoremalpha}{Theorem}
\theoremstyle{corollary}
\newtheorem{theorem}{Theorem}[section]
\newtheorem{main theorem}{Main Theorem}
\newtheorem{proposition}[theorem]{Proposition}
\newtheorem{corollary}[theorem]{Corollary}
\newtheorem{lemma}[theorem]{Lemma}
\newtheorem{theorem*}{Theorem}
\newtheorem{corollary*}[theorem*]{Corollary}
\newtheorem{conjecture*}[theorem*]{Conjecture}
\theoremstyle{definition}
\newtheorem{definition}[theorem]{Definition}
\newtheorem{definition-lemma}[theorem]{Definition-Lemma}
\newtheorem{remark}[theorem]{Remark}
\newcommand\R{\mathbb{R}}
\newcommand\Q{\mathbb{Q}}
\newcommand\Z{\mathbb{Z}}
\newcommand\eps{\varepsilon}
\renewcommand\epsilon{\varepsilon}
\newcommand{\mc}{\mathcal}
\DeclareMathOperator{\codim}{codim}
\DeclareMathOperator{\N^1}{N^1}
\DeclareMathOperator{\ord}{ord}
\DeclareMathOperator{\mult}{mult}
\DeclareMathOperator{\Supp}{Supp}
\DeclareMathOperator{\Cent}{Cent}
\DeclareMathOperator{\vol}{vol}
\DeclareMathOperator{\val}{val}
\DeclareMathOperator{\SB}{SB}
\newcommand{\bm}{\mathbf B_-}  %B-minus
\newcommand{\bp}{\mathbf B_+}  %B-plus
\newcommand{\okbd}{\Delta}
\newcommand{\okval}{\Delta^{\val}}
\newcommand{\oklim}{\Delta^{\lim}}
\newcommand{\oknum}{\Delta^{\text{num}}}
\def\Cent{\operatorname{Cent}}
\newcommand{\kappanu}{\kappa_\nu}
\begin{document}

\title{Okounkov bodies associated to pseudoeffective divisors II}

\author{Sung Rak Choi}
\address{Department of Mathematics, Yonsei University, Seoul, Korea}
\email{sungrakc@yonsei.ac.kr}

\author{Jinhyung Park}
\address{School of Mathematics, Korea Institute for Advanced Study, Seoul, Korea}
\email{parkjh13@kias.re.kr}

\author{Joonyeong Won}
\address{Center for Geometry and Physics, Institute for Basic Science, Pohang, Korea}
\email{leonwon@ibs.re.kr}

%\subjclass[2010]{}
\date{\today}
\keywords{Okounkov body, pseudoeffective divisor, asymptotic invariant, Zariski decomposition}
\thanks{S. Choi and J. Park were partially supported by NRF-2016R1C1B2011446. J. Won was partially supported by IBS-R003-D1, Institute for Basic Science in Korea. }

\begin{abstract}
We first prove some basic properties of Okounkov bodies, and give a characterization of Nakayama and positive volume subvarieties of a pseudoeffective divisor in terms of Okounkov bodies. Next, we show that each valuative and limiting Okounkov bodies of a pseudoeffective divisor which admits the birational good Zariski decomposition is a rational polytope with respect to some admissible flag. This is an extension of the result of Anderson-K\"{u}ronya-Lozovanu about the rational polyhedrality of Okounkov bodies of big divisors with finitely generated section rings.
\end{abstract}

\maketitle

%\tableofcontents

\section{Introduction}

This paper is a continuation of our investigation on Okounkov bodies associated to pseudoeffective divisors (\cite{CHPW1}, \cite{CHPW2}, \cite{CPW}).
Let $X$ be a smooth projective variety of dimension $n$, and $D$ be a divisor on $X$. Fix an admissible flag $Y_\bullet$ on $X$, that is, a sequence of irreducible subvarieties
$$
Y_\bullet: X=Y_0\supseteq Y_1\supseteq \cdots\supseteq Y_{n-1}\supseteq Y_n=\{ x\}
$$
where each $Y_i$ is of codimension $i$ in $X$ and is smooth at $x$. The Okounkov body $\okbd_{Y_\bullet}(D)$ of a big divisor $D$ with respect to $Y_\bullet$ is a convex body in the Euclidean space $\R^n$ which carries rich information of $D$.
Okounkov first defined the Okounkov body associated to an ample divisor in \cite{O1}, \cite{O2}. After this pioneering work, Lazarsfeld-Musta\c{t}\u{a} \cite{lm-nobody} and Kaveh-Khovanskii \cite{KK} independently generalized Okounkov's work to big divisors (see \cite{B2} for a survey). We then further extended the study of Okounkov bodies to pseudoeffective divisors in \cite{CHPW1}. More precisely, we have introduced and studied two convex bodies, called the \emph{valuative Okounkov body} $\okval_{Y_\bullet}(D)$ and the \emph{limiting Okounkov body} $\oklim_{Y_\bullet}(D)$ associated to a pseudoeffective divisor $D$. See Sections \ref{okbdsubsec} and \ref{nakpvssec} for definitions and basics on Okounkov bodies.

In this paper, we first prove supplementary results to \cite{CHPW1}.
Main theorems of \cite{CHPW1} and the subsequent results in this paper depend on the following property of the Okounkov body. This theorem is a generalization of \cite[Theorem 4.26]{lm-nobody} and \cite[Theorem 3.4]{Jow}.

\begin{theoremalpha}[=Theorem \ref{newtheorem}]\label{newthrm}
Let $X$ be a smooth projective variety of dimension $n$, and $D$ be a big divisor on $X$. Fix an admissible flag $Y_\bullet$ such that $Y_{n-k} \not\subseteq \bp(D)$. Then we have
$$
\okbd_{Y_{n-    k\bullet}}(D) = \okbd_{Y_\bullet}(D) \cap (\{ 0\}^{n-k} \times \R_{\geq 0}^{k}).
$$
\end{theoremalpha}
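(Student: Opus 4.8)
The plan is to argue by induction on the codimension $c := n-k$ of $Y_{n-k}$, the case $c=0$ being trivial (the Okounkov body already lies in $\R_{\geq 0}^n$) and the case $c=1$ being the heart. The reduction to $c=1$, i.e.\ to a single prime divisor $V := Y_1 \not\subseteq \bp(D)$ for which the assertion reads $\okbd_{Y_{1\bullet}}(D) = \okbd_{Y_\bullet}(D) \cap (\{0\}\times\R_{\geq 0}^{n-1})$, rests on two points. First, restriction of complete linear series factors,
\[
H^0\!\bigl(X,\mathcal{O}_X(mD)\bigr) \longrightarrow H^0\!\bigl(Y_1,\mathcal{O}_{Y_1}(mD)\bigr) \longrightarrow H^0\!\bigl(Y_{n-k},\mathcal{O}_{Y_{n-k}}(mD)\bigr),
\]
so the restricted graded linear series of $D$ along $Y_{n-k}$, hence $\okbd_{Y_{n-k\bullet}}(D)$, is unchanged whether $Y_{n-k}$ is regarded as sitting inside $X$ or inside $Y_1$ (with $D$ replaced by $D|_{Y_1}$). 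Second, the general inclusion $\B(L|_{Y_1}) \subseteq \B(L)\cap Y_1$ of stable base loci yields $\bp(D|_{Y_1}) \subseteq \bp(D)\cap Y_1$ once $Y_1 \not\subseteq \bp(D)$, so $Y_{n-k} \not\subseteq \bp(D)$ forces $Y_{n-k} \not\subseteq \bp(D|_{Y_1})$ and $D|_{Y_1}$ stays big. Thus, granting the case $c=1$ applied to $(X,D,Y_1)$, which identifies the slice of $\okbd_{Y_\bullet}(D)$ at first coordinate $0$ with the restricted Okounkov body $\okbd_{Y_{1\bullet}}(D)$ on $Y_1$ relative to the induced flag $Y_1 \supseteq Y_2 \supseteq \cdots \supseteq Y_n$, the induction hypothesis applied to $(Y_1, D|_{Y_1}, Y_{n-k})$ finishes the proof.

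For $c=1$, one inclusion is pure bookkeeping. A nonzero $s \in H^0(X,\mathcal{O}_X(mD))$ satisfies $\nu_1(s) = \ord_V(s) = 0$ precisely when $s|_V \neq 0$, and then $\nu_{Y_\bullet}(s) = (0,\nu_{Y_{1\bullet}}(s|_V))$; hence, with $W_m := \im\!\bigl(H^0(X,\mathcal{O}_X(mD)) \to H^0(V,\mathcal{O}_V(mD|_V))\bigr)$ the restricted linear series, $\tfrac{1}{m}\,\nu_{Y_{1\bullet}}(W_m\setminus\{0\})$ is exactly the set of last $(n-1)$ coordinates of $\tfrac{1}{m}\,\nu_{Y_\bullet}(s)$ over sections $s$ with vanishing first coordinate. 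Passing to closures over all $m$ gives $\okbd_{Y_{1\bullet}}(D) \subseteq \okbd_{Y_\bullet}(D) \cap (\{0\}\times\R_{\geq 0}^{n-1})$ for free, the restricted Okounkov body being the closed convex hull of a subset of the set defining the slice.

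The reverse inclusion is the crux, and is where the bigness of $D$ and the hypothesis $V \not\subseteq \bp(D)$ are genuinely used. Both sides are closed convex bodies in $\R_{\geq 0}^{n-1}$, the left contained in the right and (since $V\not\subseteq\bp(D)$ gives $\vol_{X|V}(D)>0$) of full dimension $n-1$, so it suffices to show they have equal Euclidean volume. On one hand $(n-1)!\,\vol_{n-1}(\okbd_{Y_{1\bullet}}(D)) = \vol_{X|V}(D)$, because $V\not\subseteq\bp(D)$ makes $W_\bullet$ large enough — it contains an ample series — for the Okounkov-body volume formula to apply. On the other hand, the slicing theory for Okounkov bodies of big divisors (the slicing formula for $\vol_X(D)$, together with Brunn--Minkowski concavity of $t \mapsto \vol_{n-1}(\okbd_{Y_\bullet}(D)_{\nu_1=t})^{1/(n-1)}$ and the identification of $\lim_{t\to 0^+}\vol_{n-1}(\okbd_{Y_\bullet}(D)_{\nu_1=t})$ with $\vol_{X|V}(D)/(n-1)!$) forces $\vol_{n-1}$ of the slice at $0$ to be at most $\vol_{X|V}(D)/(n-1)!$. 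Comparing the two bounds gives equality of volumes, hence equality of the bodies. I expect this volume comparison — essentially the content of \cite[Theorem 4.26]{lm-nobody} and \cite[Theorem 3.4]{Jow} in codimension one — to be the main obstacle, and it is exactly the step that fails for merely pseudoeffective $D$, which is why bigness of $D$ cannot be dropped here.
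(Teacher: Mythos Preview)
Your codimension-one case is essentially \cite[Theorem~4.26]{lm-nobody}, and the volume-comparison sketch you give for it is sound. The problem is the induction. After the $c=1$ step you have identified $\okbd_{Y_\bullet}(D)\cap(\{0\}\times\R_{\ge0}^{n-1})$ with $\okbd_{Y_{1\bullet}}(D)$, the Okounkov body of the \emph{restricted} graded series $W^{(1)}_m=\im\bigl(H^0(X,mD)\to H^0(Y_1,mD|_{Y_1})\bigr)$. But the induction hypothesis you then invoke is for the big divisor $D|_{Y_1}$ on $Y_1$: it relates the full body $\okbd(D|_{Y_1})$ on $Y_1$ to the body of the series $\im\bigl(H^0(Y_1,mD|_{Y_1})\to H^0(Y_{n-k},mD|_{Y_{n-k}})\bigr)$ restricted \emph{from} $Y_1$. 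In general $W^{(1)}_m\subsetneq H^0(Y_1,mD|_{Y_1})$, so both $\okbd_{Y_{1\bullet}}(D)\subsetneq\okbd(D|_{Y_1})$ and the corresponding strict inclusion of restricted bodies on $Y_{n-k}$ can occur; your assertion that the restricted series along $Y_{n-k}$ ``is unchanged \ldots\ with $D$ replaced by $D|_{Y_1}$'' is precisely this false identification. To run the induction one would need a version of the theorem for graded linear subseries on $Y_1$ (with a workable analogue of the hypothesis $Y_{n-k}\not\subseteq\bp$), which you have not formulated and which is not obviously easier than the goal.

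The paper does not induct. It argues directly in codimension $n-k$ by contradiction: assuming a point of the slice lies outside $\okbd_{Y_{n-k\bullet}}(D)$, it perturbs by a small ample $\epsilon A$ and uses continuity of $\vol_{X|Y_{n-k}}$ to keep the point outside $\okbd_{Y_{n-k\bullet}}(D+\epsilon A)$ as well. The contradiction comes from constructing an effective $D''\sim_{\Q}D+\epsilon A$ whose first $n-k$ valuative coordinates vanish and whose last $k$ coordinates land near the forbidden point. This lifting is carried out on a log resolution of the base ideal by approximating the positive part of the divisorial Zariski decomposition of $f^*(D+\tfrac{1}{2}\epsilon A)$ by an ample divisor (via moving Seshadri constants), so that the restriction map from $\widetilde{X}$ to $\widetilde{Y}_{n-k-1}$ becomes surjective after twisting. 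That surjectivity-after-perturbation is exactly what compensates for the failure of $W^{(1)}_m$ to exhaust $H^0(Y_1,mD|_{Y_1})$ in your approach.
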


In \cite{CHPW1}, we proved that the Okounkov bodies $\okval_{Y_\bullet}(D)$ and $\oklim_{Y_\bullet}(D)$ encode nice properties of the divisor $D$ if the given admissible flag $Y_\bullet$ contains a Nakayama subvariety of $D$ or a positive volume subvariety of $D$ (see Theorem \ref{chpwmain}).
We show the following characterization of those special subvarieties in terms of Okounkov bodies.

\begin{theoremalpha}[=Theorem \ref{geomcrit}]\label{critintro}
Let $X$ be a smooth projective variety of dimension $n$, and $D$ be an $\R$-divisor on $X$. Fix an admissible flag $Y_\bullet$ such that $Y_n$ is a general point in $X$. Then we have the following:
\begin{enumerate}[leftmargin=0cm,itemindent=.6cm]
\item[$(1)$] If $D$ is effective, then $Y_\bullet$ contains a Nakayama subvariety of $D$ if and only if $\okval_{Y_\bullet}(D) \subseteq \{0 \}^{n-\kappa(D)} \times \R^{\kappa(D)}$.
\item[$(2)$] If $D$ is pseudoeffective, then $Y_\bullet$ contains a positive volume subvariety of $D$ if and only if $\oklim_{Y_\bullet}(D) \subseteq \{0 \}^{n-\kappanu(D)} \times \R^{\kappanu(D)}$ and $\dim \oklim_{Y_\bullet}(D)=\kappanu(D)$.
\end{enumerate}
\end{theoremalpha}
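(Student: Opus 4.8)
The plan is to reduce each biconditional to a statement about linear series, using the flag valuation $\nu_{Y_\bullet}$ together with the restriction property of Theorem \ref{newthrm}. For part $(1)$, both implications come from one chain of equivalences. Since the set $\{0\}^{n-\kappa(D)}\times\R_{\ge0}^{\kappa(D)}$ is closed and convex and $\okval_{Y_\bullet}(D)$ is generated, as a closed convex set, by the vectors $\tfrac1m\,\nu_{Y_\bullet}(s)$ for $0\ne s\in H^0(X,\lfloor mD\rfloor)$, the containment $\okval_{Y_\bullet}(D)\subseteq\{0\}^{n-\kappa(D)}\times\R^{\kappa(D)}$ holds if and only if $\nu_i(s)=0$ for all $1\le i\le n-\kappa(D)$, all sufficiently divisible $m$, and all nonzero $s$. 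Unwinding the definition of $\nu_{Y_\bullet}$, this is in turn equivalent to the restriction map $H^0(X,\lfloor mD\rfloor)\to H^0\!\big(Y_{n-\kappa(D)},\lfloor mD\rfloor|_{Y_{n-\kappa(D)}}\big)$ being injective for all sufficiently divisible $m$, since a nonzero section with vanishing first $n-\kappa(D)$ valuation coordinates is exactly one that restricts, along the tower $Y_1\supseteq\cdots\supseteq Y_{n-\kappa(D)}$, to a nonzero section on $Y_{n-\kappa(D)}$. As $\dim Y_{n-\kappa(D)}=\kappa(D)$, this injectivity for all sufficiently divisible $m$ is precisely the condition that $Y_{n-\kappa(D)}$ be a Nakayama subvariety of $D$; and since a Nakayama subvariety has dimension $\kappa(D)$, it is the only flag member that can be one. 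So ``$Y_\bullet$ contains a Nakayama subvariety of $D$'' is equivalent to the displayed containment. The hypothesis that $Y_n$ is general is used only to ensure that $Y_\bullet$ is admissible and that the restricted linear series along the $Y_i$ behave as expected.

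For part $(2)$, the ``only if'' direction is immediate from the structure theorem of \cite{CHPW1} (Theorem \ref{chpwmain}): if $Y_\bullet$ contains a positive volume subvariety $U$ of $D$, then $U=Y_{n-\kappanu(D)}$ and $\oklim_{Y_\bullet}(D)=\{0\}^{n-\kappanu(D)}\times\Delta'$, where $\Delta'$ is the Okounkov body of the restricted graded series on $U$; this lies in $\{0\}^{n-\kappanu(D)}\times\R^{\kappanu(D)}$ and is $\kappanu(D)$--dimensional because $U$ has positive limiting restricted volume. The substance is the ``if'' direction. Fix an ample divisor $A$, so that $\oklim_{Y_\bullet}(D)=\bigcap_{\epsilon>0}\okbd_{Y_\bullet}(D+\epsilon A)$ is a decreasing intersection of Okounkov bodies of big divisors, and put $k=\kappanu(D)$. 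The first and main step is to show, from the two hypotheses, that $Y_{n-k}\not\subseteq\bm(D)$. Suppose not. Using $\bm(D)=\bigcup_{\epsilon>0}\bp(D+\epsilon A)$ and the irreducibility of $Y_{n-k}$, we get $Y_{n-k}\subseteq\bp(D+\epsilon A)$ for all small $\epsilon>0$; hence $\vol_{X|Y_{n-k}}(D+\epsilon A)=0$, and therefore the $k$--dimensional coordinate slice $\okbd_{Y_\bullet}(D+\epsilon A)\cap(\{0\}^{n-k}\times\R_{\ge0}^{k})$ has $k$--dimensional volume zero — a full--dimensional slice would, via interior lattice points of the Okounkov body, produce $\gtrsim m^{k}$ independent sections in the restricted series on $Y_{n-k}$ and so force $\vol_{X|Y_{n-k}}(D+\epsilon A)>0$. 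But the support hypothesis gives $\oklim_{Y_\bullet}(D)\subseteq\okbd_{Y_\bullet}(D+\epsilon A)\cap(\{0\}^{n-k}\times\R_{\ge0}^{k})$, so $\vol_{\R^{k}}\big(\oklim_{Y_\bullet}(D)\big)=0$, contradicting $\dim\oklim_{Y_\bullet}(D)=k$.

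Once $Y_{n-k}\not\subseteq\bm(D)$ is known, we in fact have $Y_{n-k}\not\subseteq\bp(D+\epsilon A)$ for every $\epsilon>0$, so Theorem \ref{newthrm} applies to each big divisor $D+\epsilon A$ and yields
\[
\vol_{\R^{k}}\!\Big(\okbd_{Y_\bullet}(D+\epsilon A)\cap\big(\{0\}^{n-k}\times\R_{\ge0}^{k}\big)\Big)=\tfrac{1}{k!}\,\vol_{X|Y_{n-k}}(D+\epsilon A).
\]
Letting $\epsilon\to0$, the left side converges to $\vol_{\R^{k}}\!\big(\oklim_{Y_\bullet}(D)\cap(\{0\}^{n-k}\times\R_{\ge0}^{k})\big)$, which by the support hypothesis equals $\vol_{\R^{k}}\!\big(\oklim_{Y_\bullet}(D)\big)$, and this is strictly positive by the dimension hypothesis; the right side converges to $\tfrac{1}{k!}$ times the limiting restricted volume of $D$ along $Y_{n-k}$. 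Hence that limiting restricted volume is positive, which together with $\dim Y_{n-k}=k=\kappanu(D)$ is exactly the assertion that $Y_{n-k}$ is a positive volume subvariety of $D$, so $Y_\bullet$ contains one. (The limit interchanges are routine: $\okbd_{Y_\bullet}(D+\epsilon A)$ decreases to $\oklim_{Y_\bullet}(D)$ in the Hausdorff sense by super-additivity of Okounkov bodies, and both intersecting with a fixed coordinate subspace and the volume functional are continuous along decreasing families of convex bodies; $\vol_{X|Y_{n-k}}(D+\epsilon A)$ is monotone in $\epsilon$.)

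The main obstacle is the first step of part $(2)$ — proving $Y_{n-k}\not\subseteq\bm(D)$ — because the diminished base locus is not well behaved under restriction, so one cannot simply descend the flag. The argument circumvents this by passing to the ample perturbations $D+\epsilon A$, where $\bm(D)=\bigcup_\epsilon\bp(D+\epsilon A)$ converts a $\bm$--statement into an augmented--base--locus statement amenable to the dictionary $\bp(L)\supseteq V\Leftrightarrow\vol_{X|V}(L)=0$ (for $L$ big) and to Theorem \ref{newthrm}; the one genuinely delicate point is that a coordinate slice of the Okounkov body of a big divisor $L$ is full--dimensional only if $Y_{n-k}\not\subseteq\bp(L)$, which needs the ``interior lattice point'' argument sketched above rather than a direct appeal to Theorem \ref{newthrm} (whose hypothesis we are precisely trying to verify). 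A secondary, purely bookkeeping issue is the presence of floors $\lfloor mD\rfloor$ when $D$ is an $\R$--divisor, which does not affect any asymptotic statement.
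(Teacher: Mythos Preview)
Your argument for part~(1) is correct and essentially the paper's: both unwind the containment $\okval_{Y_\bullet}(D)\subseteq\{0\}^{n-\kappa(D)}\times\R^{\kappa(D)}$ as vanishing of the first $n-\kappa(D)$ valuation coordinates for every effective $D'\sim_{\R}D$, which is exactly injectivity of restriction to $Y_{n-\kappa(D)}$.

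For part~(2)($\Leftarrow$) you take a much harder route than the paper and leave a genuine gap. Your ``first and main step'' attempts to prove $Y_{n-k}\not\subseteq\bm(D)$ by contradiction, and its crux---that a full-dimensional coordinate slice of $\okbd_{Y_\bullet}(D+\epsilon A)$ forces $\vol_{X|Y_{n-k}}(D+\epsilon A)>0$---is not established. You correctly note that Theorem~\ref{newthrm} cannot be invoked (its hypothesis is precisely what you are trying to verify), but the ``interior lattice point'' sketch does not close the gap: points of the slice $\{0\}^{n-k}\times\R^k$ lie on the \emph{boundary} of the $n$-dimensional body, so density of valuation points in $\okbd_{Y_\bullet}(D+\epsilon A)$ gives no control over which points of the slice are actual valuation vectors. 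A point of the slice may well be a limit of valuation vectors with strictly positive first coordinates, and such vectors contribute nothing to the restricted series on $Y_{n-k}$. In effect you are asserting the hard inclusion of Theorem~\ref{newthrm} without its hypothesis.

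The paper bypasses all of this with a one-line observation you overlooked. By the paper's convention, ``$Y_n$ general'' for the limiting body means $Y_n\not\in\bm(D)$; since $Y_n\in Y_{n-k}$, this gives $Y_{n-k}\not\subseteq\bm(D)$ immediately. Then for any ample $A$ one has $Y_{n-k}\not\subseteq\bp(D+A)$, Theorem~\ref{newthrm} applies directly, and the single inequality
\[
\vol_{X|Y_{n-k}}(D+A)=k!\cdot\vol_{\R^k}\okbd_{Y_{n-k\bullet}}(D+A)\;\ge\;k!\cdot\vol_{\R^k}\oklim_{Y_\bullet}(D)>0
\]
yields $\vol_{X|Y_{n-k}}^+(D)>0$---no limit interchanges, no countable-union/irreducibility step, no lattice-point argument. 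The generality hypothesis on $Y_n$ is thus not merely a regularity condition as you say in part~(1); in part~(2) it is the actual source of $Y_{n-k}\not\subseteq\bm(D)$.
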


One of the most important properties one can probably expect a convex set in $\R^n$ to satisfy is rational polyhedrality.
However, the geometric structure of Okounkov body is rather wild.
It can be non-polyhedral even if the variety $X$ is a Mori dream space and a divisor $D$ is ample (see \cite[Subsection 6.3]{lm-nobody}, \cite[Section 3]{KLM}). However, Anderson-K\"{u}ronya-Lozovanu proved that if a big divisor $D$ has a finitely generated section ring $R(X, D):=\bigoplus_{m \geq 0} H^0(X, mD)$, then there exists an admissible flag $Y_\bullet$ such that the Okounkov body $\okbd_{Y_\bullet}(D)$ is a rational polytope (\cite[Theorem 1]{AKL}).
We also refer to \cite[Theorems 1.1 and 4.17]{CPW} and \cite[Corollary 4.5]{S} for more related results.

Our next aim is to generalize \cite[Theorem 1]{AKL} to the valuative and limiting Okounkov bodies.
We recall that when a divisor $D$ is big, it has a finitely generated section ring if and only if it admits the birational good Zariski decomposition (see \cite[III.1.17.Remark]{nakayama}). However, for a pseudoeffective divisor $D$, such equivalence no longer holds in general; $D$ admits the birational good Zariski decomposition if and only if $D$ has a finitely generated section ring and is abundant (see Proposition \ref{zdabfg}).
For the rational polyhedrality of the Okounkov bodies of pseudoeffective divisors, we assume the existence of good Zariski decomposition on some birational model instead of the finite generation condition.
See Subsection \ref{zdsubsec} for our definition of (good) Zariski decomposition.

\begin{theoremalpha}[=Corollary \ref{ratpolval} and Theorem \ref{ratsimlim}]\label{main1}
Let $X$ be a smooth projective variety, and $D$ be a pseudoeffective $\Q$-divisor on $X$ which admits the good birational Zariski decomposition.
Then each  Okounkov bodies  $\okval_{Y_\bullet}(D)$ and $\oklim_{Y_\bullet}(D)$ is rational polyhedral with respect to some admissible flag $Y_\bullet$.
\end{theoremalpha}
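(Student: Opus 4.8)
The plan is to reduce the assertion to the rational polyhedrality of \emph{some} Okounkov body of an \emph{ample} divisor on a smooth projective variety of dimension $\kappa(D)$, which is exactly \cite[Theorem~1]{AKL}. Two facts will be used repeatedly. First, the class of rational polytopes in Euclidean space is stable under direct products, under translation by a rational vector, and under the elementary identifications that occur below. Second, by \cite{CHPW1} the valuative and limiting Okounkov bodies of a divisor are unchanged if one replaces $X$ by a smooth birational model $X'$ and the flag $Y_\bullet$ by its strict transform, provided the members of the flag lie over the locus where $X'\to X$ is an isomorphism; this lets us work freely on birational models, the flag we finally produce descending to an admissible flag on $X$. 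Set $\kappa:=\kappa(D)$.

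I would first pass to a good Zariski model. By hypothesis there is a projective birational morphism $\pi\colon X'\to X$ with $X'$ smooth and $\pi^*D=P+N$, where $P$ is semiample and $N=N_\sigma(\pi^*D)\ge 0$; by Proposition~\ref{zdabfg} the divisor $D$ is abundant, so $\kappa(D)=\kappanu(D)=\kappa(P)=\kappa$. For $m$ sufficiently divisible, $|mP|$ is base point free, and its Stein factorization is a fibration $g\colon X'\to Z$ onto a normal projective variety of dimension $\kappa$ with $P=g^*A$ for an ample $\Q$-divisor $A$ on $Z$. Replacing $Z$ by a resolution of singularities and $X'$ by a smooth model dominating it — harmless by the reductions above, and absorbing the resulting $g$-exceptional effective error term of $P$ into $N$, which only translates the Okounkov bodies by a rational vector — we may assume $Z$ is smooth and $g$ has connected fibers. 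Since $A$ is ample on the smooth variety $Z$, its section ring is finitely generated, so by \cite[Theorem~1]{AKL} there is an admissible flag $Z_\bullet$ on $Z$ for which $\okbd_{Z_\bullet}(A)$ is a rational polytope.

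Next I would build the flag on $X'$ adapted to $g$: put $Y'_i:=g^{-1}(Z_i)$ for $0\le i\le\kappa$ (for general $Z_i$ these are irreducible of codimension $i$), and complete $Y'_\bullet$ by a general complete-intersection flag inside the general fiber $Y'_\kappa$. A direct computation of the Okounkov valuation along this flag — the order of vanishing of $g^*\sigma$ along $g^{-1}(Z_i)$ equals $\ord_{Z_i}(\sigma)$ for $i\le\kappa$, while on the general fiber $g^*\sigma$ restricts to a nonzero constant once the base part has been removed — identifies $\okval_{Y'_\bullet}(P)$ with $\okbd_{Z_\bullet}(A)\times\{0\}^{n-\kappa}$. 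Combining this with the description, from \cite{CHPW1}, of the limiting Okounkov body of $\pi^*D$ as $\okval_{Y'_\bullet}(P)$ translated by the rational vector $\nu_{Y'_\bullet}(N)$ shows that $\oklim_{Y'_\bullet}(\pi^*D)$ is a rational polytope; descending the flag to $X$ gives Theorem~\ref{ratsimlim}. For the valuative body (Corollary~\ref{ratpolval}) I would then use the abundance of $D$: on the good Zariski model the semiampleness of $P$ forces $\okval_{Y'_\bullet}(\pi^*D)=\oklim_{Y'_\bullet}(\pi^*D)$ (cf.\ \cite{CHPW1}), so the same flag works. Alternatively one argues directly: for $V\subseteq X'$ a general complete intersection of $n-\kappa$ very ample divisors, $\dim V=\kappa$ and $P|_V$ is big, so $V$ is both a Nakayama and a positive volume subvariety of $\pi^*D$; by Theorem~\ref{chpwmain}, $\okval_{Y'_\bullet}(\pi^*D)$ for a flag with $Y'_{n-\kappa}=V$ is $\{0\}^{n-\kappa}$ times the Okounkov body of the restricted graded linear series of $\pi^*D$ on $V$, which is finitely generated (an image of $R(X',\pi^*D)$) and of maximal Iitaka dimension, hence a rational polytope for a suitable sub-flag by the argument of \cite{AKL} applied through the $\mathrm{Proj}$ of that series.

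The main obstacle is the technical core of the third paragraph: carrying out the valuative bookkeeping along the flag $g^{-1}(Z_\bullet)$ completed inside a general fiber, so as to recover \emph{exactly} $\okbd_{Z_\bullet}(A)$ from the base directions while seeing the fiber directions collapse to a single rational point, and verifying that the passage from $P$ to $\pi^*D$, together with all intermediate birational modifications and the $g$-exceptional correction, contributes only a rational translation. A secondary point of care, needed only in the alternative argument for the valuative body, is the extension of \cite[Theorem~1]{AKL} from big divisors to finitely generated big graded linear series, which I expect to settle by reduction to the divisorial case via a resolution of $\mathrm{Proj}$ of the series. Finally, one must keep every flag over the relevant isomorphism loci throughout the chain of birational modifications, so that the end product is an admissible flag on $X$ itself.
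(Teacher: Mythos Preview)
Your overall strategy---pass to a model where $P$ is semiample, fiber over the Iitaka base $Z$, and invoke \cite{AKL} on $Z$---is natural, but the flag you build is \emph{reversed} relative to the paper's, and this creates a genuine gap in the limiting case. The paper takes $Y_1,\ldots,Y_{n-\kappa}$ to be general ample cuts (so that $Y_{n-\kappa}=V$ is a Nakayama, resp.\ positive volume, subvariety) and only then uses members of the linear series on $V$; this guarantees, via Lemma~\ref{oklimnef}, that $\oklim_{Y'_\bullet}(P)=\okbd_{Y'_{n-\kappa\bullet}}(P|_V)$ with $P|_V$ big and semiample, to which \cite[Theorem~7]{AKL} applies directly. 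Your flag instead has $Y'_1,\ldots,Y'_\kappa$ pulled back from $Z$ and the remaining $n-\kappa$ subvarieties inside a fiber $F$. Its $(n-\kappa)$-th member is \emph{not} a positive volume subvariety of $P$: if $n-\kappa>\kappa$ it lies in $F$ where $P|_F\equiv 0$, and if $n-\kappa\le\kappa$ it is $g^{-1}(Z_{n-\kappa})$, on which $P$ restricts to a pullback from the $(2\kappa-n)$-dimensional $Z_{n-\kappa}$ and hence has volume zero. Thus neither Theorem~\ref{chpwmain}(2) nor Lemma~\ref{oklimnef} applies. The step ``$\oklim_{Y'_\bullet}(\pi^*D)=\okval_{Y'_\bullet}(P)+\nu_{Y'_\bullet}(N)$'' is not in \cite{CHPW1}: what Lemma~\ref{okbdzd} gives for $Y_n$ general is $\oklim_{Y'_\bullet}(\pi^*D)=\oklim_{Y'_\bullet}(P)$, with no translation, and the missing equality $\oklim_{Y'_\bullet}(P)=\okval_{Y'_\bullet}(P)$ for your particular flag is exactly the nontrivial content that must be supplied. (It is plausible---your fiber coordinates should collapse to $0$ in the limit---but it is a proof obligation, not a citation.) A minor secondary point: after resolving $Z$ the pullback of $A$ is only big and nef, so ``absorbing a $g$-exceptional error into $N$'' no longer gives a Zariski decomposition; in fact no resolution of $Z$ is needed, since \cite{AKL} requires only finite generation.

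Your \emph{alternative} argument for $\okval$---take $V$ a general $(n-\kappa)$-fold ample intersection, so that $V$ is a Nakayama subvariety, and apply an \cite{AKL}-style argument to the finitely generated restricted series on $V$---is essentially the paper's proof of Theorem~\ref{ratsimval} (with Lemma~\ref{simplex} playing the role of your ``\cite{AKL} applied through $\mathrm{Proj}$''). The paper treats $\oklim$ the same way: a flag through a positive volume subvariety $V$, then Lemma~\ref{oklimnef} and Lemma~\ref{okbdzd} reduce $\oklim_{Y_\bullet}(D)$ to $\okbd_{Y_{n-\kappa\bullet}}(P|_V)$, which is a rational simplex by \cite[Theorem~7]{AKL}.
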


We expect that the rational polyhedrality of Okounkov body holds in more general situations.
There are examples of divisors which do not admit birational good Zariski decompositions, but whose associated Okounkov bodies are rational polyhedral (see Remark \ref{ratrem}).

To prove Theorem \ref{main1} for the case of valuative Okounkov bodies, we use the same idea as \cite[Proposition 4]{AKL}.
%The finite generation of section ring is sufficient for the rational polyhedrality of valuative Okounkov bodies.
Using only the finite generation of section ring, we show the rational polyhedrality of the valuative Okounkov body with respect to an admissible flag taken by the intersections of general members of the linear series (see Theorem \ref{ratsimval}).
%\comment{Do we use abundant condition for $\okval(D)$? no}
For the case of limiting Okounkov bodies, under the given assumption, we prove the statement by reducing to the rationality problem of the limiting Okounkov body on some high model $f \colon Y\to X$ where the good Zariski decomposition of $f^*D$ exists (see Theorem \ref{ratsimlim}).

\medskip

The organization of the paper is as follows. In Section \ref{prelimsec}, we collect basic facts on various notions that are used in the proofs. Next, in Section \ref{okbdsubsec}, we recall basic properties of Okounkov bodies, and prove  Theorem \ref{newthrm}. Then we study some properties of Nakayama subvarieties and positive volume subvarieties to show Theorem \ref{critintro} in Section \ref{nakpvssec}. Section \ref{ratsec} is devoted to showing Theorem \ref{main1}.

\subsection*{Acknowledgment}
We would like to thank the referee for providing numerous  helpful suggestions and comments and for pointing out several gaps in earlier versions of this manuscript.

\section{Preliminaries}\label{prelimsec}

In this section, we collect relevant facts which will be used later.
Throughout the paper, $X$ is a smooth projective variety of dimension $n$, and we always work over an algebraically closed field of characteristic zero.

\subsection{Asymptotic invariants}
We review basic asymptotic invariants of divisors, namely, the asymptotic base loci and volume functions.
The \emph{stable base locus} of an $\R$-divisor $D$ is defined as $\SB(D):= \bigcap_{D \sim_{\R} D' \geq 0} \Supp(D')$.
The \emph{augmented base locus} of an $\R$-divisor $D$ is defined as $\bp(D):=\bigcap_A\text{SB}(D-A)$
where the intersection is taken over all ample divisors $A$.
The \emph{restricted base locus} of an $\R$-divisor $D$ is defined as $\bm(D):=\bigcup_{A}\SB(D+A)$
where the union is taken over all ample divisors $A$.
Note that $\bp(D)$ and $\bm(D)$ depend only on the numerical class of $D$.
For details, we refer to \cite{elmnp-asymptotic inv of base} and \cite{lehmann-red}.

Now, let $V$ be an irreducible subvariety of $X$ of dimension $v$.
The \emph{restricted volume} of a $\Z$-divisor $D$ along $V$ is defined as
$
\vol_{X|V}(D):=\limsup_{m \to \infty} \frac{h^0(X|V,mD)}{m^v/v!}
$
where $h^0(X|V,mD)$ is the dimension of the image of the natural restriction map $\varphi \colon H^0(S,\mc O_X(D))\to H^0(V,\mc O_V(D))$.
The restricted volume $\vol_{X|V}(D)$ depends only on the numerical class of $D$, and one can uniquely extend it to a continuous function
$$
\vol_{X|V} \colon \text{Big}^V (X) \to \R
$$
where $\text{Big}^V(X)$ is the set of all $\R$-divisor classes $\xi$ such that $V$ is not properly contained in any irreducible component of $\bp(\xi)$. When $V=X$, we simply let $\vol_X(D):=\vol_{X|X}(D)$, and we call it the \emph{volume} of an $\R$-divisor $D$.
For more details on volumes and restricted volumes, see \cite{pos} and \cite{elmnp-restricted vol and base loci}.
Now assume that $V\not\subseteq\bm(D)$ for an $\R$-divisor $D$.
The \emph{augmented restricted volume} of $D$ along $V$ is defined as
$\vol_{X|V}^+(D):=\lim_{\eps\to 0+} \vol_{X|V}(D+\eps A)$
where $A$ is an ample divisor on $X$. The definition is independent of the choice of $A$.
Note that $\vol_{X|V}^+(D)=\vol_{X|V}(D)$ for $D \in \text{Big}^V (X)$.
This also extends uniquely to a continuous function
$$
\vol_{X|V}^+ \colon \overline{\text{Eff}}^V(X) \to \R
$$
where $\overline{\text{Eff}}^V(X) := \text{Big}^V(X) \cup \{ \xi \in \overline{\text{Eff}}(X) \setminus \text{Big}(X) \mid V \not\subseteq \bm(\xi)   \}$.
For $D\in \overline{\text{Eff}}^V(X)$, we have $\vol_{X|V}(D) \leq \vol_{X|V}^+(D) \leq \vol_{V}(D|_V)$, and both inequalities can be strict in general.
See \cite{CHPW1} for more details on augmented restricted volumes.

\subsection{Iitaka dimension}

Let $D$ be an $\R$-divisor on $X$.
Let $\mathbb N(D)=\{m\in\Z_{>0}|\; |\lfloor mD\rfloor|\neq\emptyset\}$.
For $m\in\mathbb N(D)$, we consider the rational map $\phi_{mD} \colon X \dashrightarrow Z_m \subseteq \mathbb P^{\dim|\lfloor mD\rfloor|}$ defined by the linear system $|\lfloor mD\rfloor |$.
The \emph{Iitaka dimension} of $D$ is defined as
$$
\kappa(D):=\left\{
\begin{array}{ll}
\max\{\dim\text{Im}(\phi_{mD}) \mid \;m\in\mathbb N(D)\}& \text{if }\mathbb N(D)\neq\emptyset\\
-\infty&\text{if }\mathbb N(D)=\emptyset.
\end{array}
\right.
$$
We remark that  the Iitaka dimension $\kappa(D)$ is not really an invariant of the $\R$-linear equivalence class of $D$.
Nonetheless, it satisfies the property that $\kappa(D)=\kappa(D')$ for effective divisors $D,D'$ such that $D\sim_{\R}D'$.

For another important invariant, we fix a sufficiently ample $\Z$-divisor $A$ on $X$. The \emph{numerical Iitaka dimension} of $D$ is defined as
$$
\kappanu(D):= \max\left\{k \in \Z_{\geq0} \left|\; \limsup_{m \to \infty} \frac{h^0(X, \lfloor mD \rfloor + A)}{m^k}>0 \right.\right\}
$$
if $h^0(X, \lfloor mD \rfloor + A)\neq\emptyset$ for infinitely many $m>0$, and we let $\kappanu(D):=-\infty$ otherwise.
The numerical Iitaka dimension $\kappanu(D)$ depends only on the numerical class $[D]\in\N^1(X)_{\R}$.

\begin{definition}
An $\R$-divisor $D$ is said to be \emph{abundant} if $\kappa(D)=\kappanu(D)$.
\end{definition}

By definition, $\kappa(D) \leq \kappanu(D)$ holds and the inequality can be strict in general.
However, $\kappanu(D)=\dim X$ if and only if $\kappa(D)=\dim X$.
We refer to  \cite{E}, \cite{lehmann-nu}, \cite{nakayama} for more detailed properties of $\kappa$ and $\kappanu$.

Recall that the \emph{section ring of an $\R$-divisor $D$} is defined as $R(X, D):=\bigoplus_{m \geq 0} H^0(X, \lfloor mD \rfloor)$.

\begin{proposition}[{\cite[Corollary 1]{MR}}]\label{semiampleabundant}
A $\Q$-divisor $D$ on $X$ is semiample if and only if it is nef, abundant, and its section ring is finitely generated.
\end{proposition}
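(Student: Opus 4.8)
The plan is to prove the two implications separately, the forward one being elementary and the reverse one containing the real content.

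For the forward implication, suppose $D$ is semiample. I would choose $m$ large and divisible so that $|mD|$ is base-point free and, after Stein factorization, defines a surjective morphism $f\colon X\to Z$ with connected fibres onto a normal projective variety, with $mD=f^*A$ for an ample $\Q$-divisor $A$ and $\dim Z=\kappa(D)$. Then $D$ is nef as the pullback of an ample class; $R(X,D)=R(Z,A)$ is finitely generated because $A$ is ample; and since $f_*\mathcal O_X=\mathcal O_Z$ one has $\kappa(D)=\dim Z$, while $(f^*A)^{\dim Z}$ is a nonzero effective class of dimension $n-\dim Z$ and $(f^*A)^{\dim Z+1}=0$, so $\kappanu(D)=\dim Z=\kappa(D)$ by Nakayama's description of $\kappanu$ for nef divisors (\cite{nakayama}). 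Hence $D$ is abundant.

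For the reverse implication, assume $D$ is nef and abundant with $R(X,D)$ finitely generated. After replacing $D$ by a divisible multiple I would assume $D$ is a $\Z$-divisor whose section ring is generated in degree one, and resolve the base ideal of $|D|$ to obtain a birational morphism $\pi\colon X'\to X$ with $X'$ smooth and $\pi^*D=M+F$, where $M$ is the base-point free mobile part and $F\ge 0$ the fixed part. Here $M=g^*H$ for the Iitaka fibration $g\colon X'\to Z:=\operatorname{Proj} R(X,D)$, with $H$ ample on $Z$ and $\dim Z=\kappa(D)$, and generation in degree one gives $\operatorname{Fix}|\pi^*(kD)|=kF$ and $H^0(X',k\pi^*D)=H^0(Z,kH)$ for all $k\ge 1$. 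I would first note that semiampleness descends along $\pi$---if $\pi^*D$ is semiample via a morphism $h$, then $h$ is trivial on every $\pi$-exceptional curve, hence factors through $\pi$ by rigidity, so $D$ is semiample---so it suffices to prove $F=0$. Since $\pi^*D$ is again nef and abundant, with $\kappa(\pi^*D)=\kappanu(\pi^*D)=\dim Z$, I would invoke Nakayama's structure theory for nef abundant divisors (\cite{nakayama}) to get $\pi^*D\equiv g^*\alpha$ for a big class $\alpha$ on $Z$; then $F\equiv g^*(\alpha-H)$, and restricting to a general fibre $X'_z$ of $g$ gives $F|_{X'_z}\equiv 0$, so the effective divisor $F|_{X'_z}$ vanishes and $F$ is $g$-vertical. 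A $g$-vertical effective divisor numerically equivalent to a pullback is itself a pullback, so $F=g^*\beta$ with $\beta\ge 0$ and $\pi^*D=g^*(H+\beta)$. Finally, the section-ring identity above becomes $H^0(Z,k(H+\beta))=H^0(Z,kH)$ for all $k\ge 1$, hence $\vol_Z(H+\beta)=\vol_Z(H)=H^{\dim Z}$; strict monotonicity of the volume then forces $\beta=0$, so $F=0$ and $\pi^*D=g^*H$ is semiample, whence so is $D$.

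The main obstacle is the middle of the reverse implication: converting abundance, a purely numerical hypothesis, into information about the restriction of $\pi^*D$ to a general fibre of its Iitaka fibration (to kill the part of the fixed locus dominating $Z$), and then combining this with finite generation to handle the vertical part. This is where both hypotheses are genuinely used and where I would have to lean on Nakayama's fine analysis of nef abundant divisors; once $F$ is known to be a pullback from $Z$, the concluding volume comparison is routine.
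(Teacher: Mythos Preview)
The paper does not prove this proposition at all; it is quoted directly from \cite[Corollary~1]{MR} and used as a black box. There is therefore no argument in the paper to compare yours against.

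Evaluating your sketch on its own merits: the forward implication is unproblematic. In the reverse implication the architecture is right (resolve the base locus, write $\pi^*D=g^*H+F$ with $g$ the Iitaka fibration, then kill $F$), but two steps are underjustified. First, in step~5 the structure results of Kawamata and Nakayama for nef abundant divisors produce their own model and fibration, which you must reconcile with the $X'$ and $g$ you built from $|D|$; moreover, those results give a $\Q$-linear equivalence $\mu^*D\sim_{\Q}h^*\alpha$, not merely a numerical one, and you should use that stronger form. Second, and more seriously, your step~7 assertion that ``a $g$-vertical effective divisor numerically equivalent to a pullback is itself a pullback'' is not a standard lemma and is far from obvious once $\dim Z\ge 2$: for $\dim Z=1$ it follows from Zariski's lemma on fibre components, but in higher dimension components of $F$ can lie over loci of codimension $\ge 2$ in $Z$ and the argument breaks down. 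You would need something like a relative negativity lemma together with the $g$-nefness of $F$ (which you have, since $\pi^*D$ is nef) to force this, and that is real work you have not done.

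A cleaner route that avoids step~7 entirely: use the $\Q$-linear form of Kawamata's theorem in step~5 to get $\pi^*D\sim_{\Q}g^*\alpha$ with $\alpha$ nef and big on $Z$; then $R(Z,\alpha)\simeq R(X,D)\simeq R(Z,H)$, so $\vol_Z(\alpha)=\vol_Z(H)$, while $g^*(\alpha-H)\sim_{\Q}F\ge 0$ forces $\alpha-H$ to be pseudoeffective. Differentiating the volume along $H+t(\alpha-H)$ (the derivative at $t=0$ is $n\,H^{\dim Z-1}\cdot(\alpha-H)>0$ unless $\alpha\equiv H$) gives $\alpha\equiv H$, hence $F\equiv 0$, hence $F=0$.
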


\subsection{Zariski decomposition}\label{zdsubsec}
We now briefly recall several notions related to Zariski decompositions in higher dimension.
For more details, we refer to \cite{B1}, \cite{nakayama}, \cite{P}.

To define the divisorial Zariski decomposition, we first consider a divisorial valuation $\sigma$ on $X$ with the center $V:=\Cent_X \sigma$ on $X$.
If $D$ is a big $\R$-divisor on $X$, we define \emph{the asymptotic valuation} of $\sigma$ at $D$ as
$\ord_V(||D||):=\inf\{\sigma(D')\mid D\equiv D'\geq 0\}$.
If $D$ is only a pseudoeffective $\R$-divisor on $X$, we define
$\ord_V(||D||):=\lim_{\epsilon\to 0+}\ord_V(||D+\eps A||)$
for some ample divisor $A$ on $X$. This definition is independent of the choice of $A$.
The \emph{divisorial Zariski decomposition} of  a pseudoeffective $\R$-divisor $D$ is the decomposition
$$
D=P_{\sigma}+N_{\sigma}
$$
into the \emph{negative part}  $N_{\sigma}:=\sum_{\codim E=1}  \ord_E(||D||)E$
where the summation is over the codimension one irreducible subvarieties $E$ of $X$ such that $ \ord_E(||D||)>0$ and the \emph{positive part} $P_{\sigma}:=D-N_{\sigma}$.
%We remark that $P_\sigma$ can be an $\R$-divisor.
%If $P_{\sigma}$ is nef, then the decomposition $D=P_{\sigma}+N_{\sigma}$ is called the \emph{Nakayama-Zariski decomposition}.
%It is well known that the summation for the negative part $N$ is finite and the components of $N$ are linearly independent in $N^1(X)_\R$. Furthermore, the positive part $P_\sigma$ is movable, that is, $\bm(D)$ has no divisorial components.

%Next, we consider the $s$-decomposition.
%Let $D$ be an effective $\Z$-divisor on $X$. By \cite[Lemma 1.10]{P}, there is a unique effective $\Z$-divisor $M$ such that (1) $M \leq D$, (2) $H^0(\mathcal{O}_X(M))=H^0(\mathcal{O}_X(D))$, and (3) if for a divisor $L \leq D$ with $H^0(\mathcal{O}_X(L))=H^0(\mathcal{O}_X(D))$, then $L \geq M$. We denote by $M_{\Z}(D)$ for such a divisor.
%Now let $D$ be an effective $\Q$-divisor on $X$.
%We define $M(D):=M_{\Z}(\lfloor D \rfloor)$.

Let $D$ be an $\R$-divisor on $X$ which is effective up to $\sim_{\R}$.
The \emph{$s$-decomposition} of $D$ is the decomposition
$$
D=P_s+N_s
$$
into the \emph{negative part} $N_s:=\inf\{L \mid L \sim_{\R} D, L \geq 0\}$ and the \emph{positive part} $P_s:=D-N_s$.
%$P_s:=\limsup_{n \to \infty} \frac{M(nD)}{n}$ and $N_s:=D-P_s$.
% If $P_s$ is nef, then the decomposition $D=P_s+N_s$ is called the \emph{$s$-Zariski decomposition}.
The positive part $P_s$ is also characterized as the smallest divisor such that $P_s \leq D$ and $R(X, P_s) \simeq R(X, D)$ (\cite[Proposition 4.8]{P}).
Note that $P_s \leq P_\sigma$ and $P_s, P_\sigma$ do not coincide in general.

\begin{lemma}\label{abundantdiv=s}
Let $D$ be an abundant $\R$-divisor on $X$ with the divisorial Zariski decomposition $D=P_\sigma + N_\sigma$ and the $s$-decomposition $D=P_s+N_s$. Then $P_\sigma=P_s$.
\end{lemma}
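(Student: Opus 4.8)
The plan is to compare the two positive parts by using the characterization of $P_s$ recalled just above the lemma: $P_s$ is the smallest effective divisor with $P_s \le D$ and $R(X,P_s)\simeq R(X,D)$. Since $P_s \le P_\sigma \le D$ always holds, it suffices to show that $R(X,P_\sigma)\simeq R(X,D)$; then minimality of $P_s$ forces $P_s \ge P_\sigma$... wait, let me re-read — it forces $P_\sigma$ to be at least as large as the smallest such divisor, i.e. $P_s \le P_\sigma$, which we already know, so we must instead argue in the other direction. The correct route: show $P_\sigma \le P_s$. For this, I would show $R(X, P_\sigma) \simeq R(X,D)$, equivalently that $N_\sigma$ is "removable" in the section ring, and then invoke the fact that $P_s$ is the \emph{smallest} divisor with this property to conclude $P_s \le P_\sigma$; combined with $P_s \le P_\sigma$ being automatic this is not yet equality, so the real content must be an inequality $P_\sigma \le P_s$ coming from abundance.

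Here is the step I actually expect to carry the argument. First, recall that for any pseudoeffective $D$ one has $H^0(X, \lfloor mD\rfloor) = H^0(X, \lfloor mP_\sigma\rfloor + (\text{part of }\lfloor mN_\sigma\rfloor))$, and more precisely the inclusion $H^0(X,\lfloor mP_\sigma\rfloor)\hookrightarrow H^0(X,\lfloor mD\rfloor)$ induced by adding $\lfloor mN_\sigma\rfloor$ — but this requires $N_\sigma$ to be the fixed part, which is exactly $N_s$, not $N_\sigma$, in general. So the key input is: \emph{for abundant $D$, the positive part $P_\sigma$ is itself abundant with $\kappa(P_\sigma)=\kappa_\nu(P_\sigma)=\kappa_\nu(D)=\kappa(D)$}, and an abundant divisor with nef-ish positive part has its divisorial Zariski decomposition agreeing with the $s$-decomposition because the negative part $N_\sigma$ is genuinely the stable fixed divisor. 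Concretely I would: (1) pass to a birational model $f\colon X'\to X$ on which $f^*D$ has a good Zariski decomposition $f^*D = P' + N'$ with $P'$ semiample — this exists for abundant divisors by Nakayama's theory; (2) observe $f_*P' = P_\sigma$ and that semiampleness of $P'$ gives $R(X',P') = R(X', f^*D)\cdot(\text{twist})$ with $N'$ exactly the fixed part, hence $R(X,P_\sigma)\simeq R(X,D)$ and moreover $N_\sigma = N_s$ after pushforward; (3) conclude $P_\sigma = P_s$.

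The main obstacle I anticipate is step (2): controlling how the good Zariski decomposition on the high model $X'$ descends to $X$, specifically showing that $f_* N' = N_\sigma$ (so that the exceptional divisors of $f$ do not contribute extra fixed components on $X$) and that the identification $R(X', P') \simeq R(X', f^*D)$ — which holds because $N'$ is $f$-fixed — actually implies $R(X, P_\sigma)\simeq R(X,D)$ on the base. This is where abundance is essential: without it there need not be a model with $P'$ semiample, and $N_\sigma$ can fail to be the fixed part (the standard example being a nef divisor with $\kappa = -\infty$ but $\kappa_\nu \ge 0$, where $P_\sigma = D$ but $P_s$ could differ). Once the descent is in place, the equality $P_\sigma = P_s$ follows from the minimality characterization of $P_s$ together with $P_s \le P_\sigma$, squeezing the two divisors together.
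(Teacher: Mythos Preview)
Your plan has a genuine gap at step~(1). Abundance alone does \emph{not} guarantee the existence of a birational model on which $f^*D$ has a decomposition with semiample positive part. Indeed, the paper's own Proposition~\ref{zdabfg} shows that $D$ admits a birational good Zariski decomposition if and only if $D$ is abundant \emph{and} $R(X,D)$ is finitely generated; any big divisor with non-finitely-generated section ring (e.g.\ Zariski's classical example) is abundant but has no such model. What Nakayama's theory gives for an abundant divisor is a model where $P_\sigma(f^*D)$ is nef and abundant, but nef $+$ abundant does not imply semiample without finite generation (Proposition~\ref{semiampleabundant}). So the model you want to invoke simply need not exist, and your argument cannot proceed. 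There is also a circularity issue relative to this paper: Lemma~\ref{abundantdiv=s} is used in the proof of Proposition~\ref{goodzd}, which underlies the very notion of good Zariski decomposition you want to appeal to.

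The paper's proof avoids all of this by a one-line appeal to \cite[Proposition 6.4]{lehmann-red}: for an abundant divisor the numerical asymptotic order of vanishing $\ord_V(\|D\|)$ coincides with the section-theoretic one $\inf_{m>0,\,D'\in|\lfloor mD\rfloor|}\frac{1}{m}\sigma(D')$. The latter is $\sigma(N_s)$ by definition of the $s$-decomposition, and the former is $\sigma(N_\sigma)$ by definition of the divisorial Zariski decomposition, so $N_s=N_\sigma$ componentwise. This is precisely the ``$P_\sigma\le P_s$'' direction you correctly identified as the real content --- but it comes directly from Lehmann's comparison of the two asymptotic valuations, not from constructing an auxiliary model.
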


\begin{proof}
Let  $\sigma$ be a divisorial valuation on $X$ with $V=\Cent_X \sigma$.
\cite[Proposition 6.4]{lehmann-red} implies that $\inf_{m \in \Z_{>0}, D' \in |\lfloor mD\rfloor|} \frac{1}{m}\sigma(D) = \ord_V(||D||)$ holds.
Since $\inf_{m \in \Z_{>0}, D' \in |\lfloor mD\rfloor|} \frac{1}{m}\sigma(D)=\sigma(N_s)$, we see that $D=P_s + N_s$ is the divisorial Zariski decomposition.
\end{proof}

The \emph{Fujita-Zariski decomposition} of a pseudoeffective $\R$-divisor $D$ is the decompositon
$$
D=P_f+N_f
$$
into the effective \emph{negative part} $N_f$ and the nef \emph{positive part} $P_f$ such that if $f \colon Y \to X$ is a birational morphism from a smooth projective variety and $f^*D=P'+N'$ with $P'$ nef and $N' \geq 0$, then $P' \leq f^*P$.
By definition, the divisorial Zariski decomposition and $s$-decomposition uniquely exist, and the Fujita-Zariski decomposition is also unique if it exists.
Recall that the Fujita-Zariski decomposition does not exist in general even if we take the pullback on a sufficiently high model $f \colon \widetilde{X} \to X$ (see \cite[Chapter IV]{nakayama}).

It is unclear in general whether the Fujita-Zariski decomposition is the divisorial Zariski decomposition (cf. \cite[III.1.17.Remark (2)]{nakayama}). However, this holds when the divisor is abundant and the positive part is semiample.

\begin{proposition}\label{goodzd}
Let $D$ be an abundant $\Q$-divisor on $X$ having a decomposition $D=P+N$ into a nef divisor $P$ and an effective divisor $N$. Then the following are equivalent:
\begin{enumerate}[leftmargin=0cm,itemindent=.6cm]
\item[$(1)$] It is the divisorial Zariski decomposition with $P=P_\sigma$ semiample.
\item[$(2)$] It is the Fujita-Zariski decomposition with $P=P_f$ semiample.
\item[$(3)$] It is the $s$-decomposition with $P=P_s$ semiample.
\end{enumerate}
\end{proposition}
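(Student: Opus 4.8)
The plan is to prove $(1)\Leftrightarrow(3)$, $(1)\Rightarrow(2)$ and $(2)\Rightarrow(3)$, which together give all the equivalences, building on Lemma \ref{abundantdiv=s} and the subadditivity of the asymptotic orders $\ord_E(\|\cdot\|)$. The equivalence $(1)\Leftrightarrow(3)$ is immediate: $D$ is abundant, so Lemma \ref{abundantdiv=s} gives $P_\sigma=P_s$ and hence $N_\sigma=N_s$, so a decomposition $D=P+N$ is the divisorial Zariski decomposition exactly when it is the $s$-decomposition, and the semiampleness of the common positive part $P$ is imposed identically in $(1)$ and $(3)$. For the rest I use the elementary observation that any decomposition $D=P+N$ with $P$ nef and $N\geq0$ satisfies $N_\sigma(D)\leq N$: nefness of $P$ gives $\ord_E(\|P\|)=0$ for every prime divisor $E$ (it holds for ample classes and is preserved in the limit defining $\ord_E$ on the pseudoeffective cone), so $\ord_E(\|D\|)\leq\ord_E(\|P\|)+\ord_E(\|N\|)\leq\mult_E(N)$, and summing over $E$ gives the claim. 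The same computation on any birational model $f\colon Y\to X$ shows $N_\sigma(f^*D)\leq N'$ whenever $f^*D=P'+N'$ with $P'$ nef and $N'\geq0$; taking $f=\operatorname{id}$ it gives $N_\sigma(D)\leq N_f$ once the Fujita--Zariski decomposition exists.

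For $(1)\Rightarrow(2)$, assume $D=P_\sigma+N_\sigma$ with $P_\sigma$ semiample and let $f\colon Y\to X$ be birational with $f^*D=P'+N'$, $P'$ nef, $N'\geq0$. The observation gives $N_\sigma(f^*D)\leq N'$. Since $f^*P_\sigma$ is again semiample, hence nef, applying the observation to the decomposition $f^*D=f^*P_\sigma+f^*N_\sigma$ gives $N_\sigma(f^*D)\leq f^*N_\sigma$; combined with the standard monotonicity $f^*N_\sigma(D)\leq N_\sigma(f^*D)$ of the negative part under pullback (\cite{nakayama}) we obtain $N_\sigma(f^*D)=f^*N_\sigma$. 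Hence $f^*N_\sigma\leq N'$, i.e.\ $P'\leq f^*P_\sigma$, which is precisely the defining property of the Fujita--Zariski decomposition; thus $D=P_\sigma+N_\sigma$ is the Fujita--Zariski decomposition and $P_f=P_\sigma$ is semiample.

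For $(2)\Rightarrow(3)$, assume $D=P_f+N_f$ is the Fujita--Zariski decomposition with $P_f$ semiample. We already have $N_\sigma(D)\leq N_f$, so by Lemma \ref{abundantdiv=s} it is enough to prove $N_f\leq N_\sigma(D)$, equivalently that $P_\sigma(D)=P_s(D)$ is nef. The route I would take: (i) use the semiampleness of $P_f$ to show $R(X,D)$ is finitely generated, the expected mechanism being the identification $R(X,D)\cong R(X,P_f)$, which says that every effective divisor linearly equivalent to $\lfloor mD\rfloor$ must contain $\lfloor mN_f\rfloor$ (forced by the Fujita minimality applied on a resolution of the linear system); (ii) deduce that $P_\sigma(D)$ is abundant, since $\kappa(P_\sigma(D))=\kappa(P_s(D))=\kappa(D)=\kappanu(D)=\kappanu(P_\sigma(D))$ using $R(X,P_s)\cong R(X,D)$ and $\kappanu(D)=\kappanu(P_\sigma(D))$; (iii) apply Proposition \ref{zdabfg} to $P_\sigma(D)$ --- being abundant with finitely generated section ring, it admits a birational good Zariski decomposition, so on some high model $g\colon Y\to X$ we may write $g^*P_\sigma(D)=M+F$ with $M$ semiample and $F\geq0$, and $F$ is $g$-exceptional because $P_\sigma(D)$ is movable; (iv) then $g^*D=M+(F+g^*N_\sigma(D))$ is a decomposition of $g^*D$ into a nef divisor and an effective divisor, so the Fujita property yields $M\leq g^*P_f$, and pushing forward (with $g_*F=0$) gives $P_\sigma(D)=g_*M\leq P_f$, whence $P_\sigma(D)=P_f$ is nef. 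The main obstacle is step (i): extracting finite generation of $R(X,D)$ --- equivalently the equality $N_f=N_\sigma(D)$ --- from the mere semiampleness of $P_f$; this is where the pullback-minimality of the Fujita--Zariski decomposition and the results of \cite{nakayama}, \cite{P} (and Proposition \ref{zdabfg}) must be used in an essential way, whereas the remaining steps are formal.
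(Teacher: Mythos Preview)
Your treatment of $(1)\Leftrightarrow(3)$ via Lemma~\ref{abundantdiv=s} and your argument for $(1)\Rightarrow(2)$ are correct; the paper proceeds the same way, simply citing \cite[III.1.17.Remark]{nakayama} for the latter.

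The problem is your $(2)\Rightarrow(3)$. You yourself flag step~(i) as an unresolved obstacle, and step~(iii) invokes Proposition~\ref{zdabfg}, whose statement and proof depend on Definition~\ref{defbirgoodzd} and hence on the present proposition, so the reference is circular as written. More importantly, the whole detour through (ii)--(iv) is unnecessary: the idea you gesture at in step~(i), ``Fujita minimality applied on a resolution of the linear system'', already finishes the proof once you notice that no resolution is needed. For any effective $L\sim_{\R}D$, the decomposition $D=(D-L)+L$ has $D-L\equiv 0$ nef and $L\geq 0$; the defining maximality of the Fujita--Zariski decomposition (with $f=\operatorname{id}_X$) then gives $D-L\leq P_f$, i.e.\ $L\geq N_f$. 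Taking the componentwise infimum over all such $L$ yields $N_s\geq N_f$, hence $P_f\geq P_s$. For the reverse inequality one uses semiampleness of $P_f$: for each prime divisor $E$ choose an effective $P_f'\sim_{\Q}P_f$ with $\mult_E P_f'=0$; then $P_f'+N_f\sim_{\Q}D$ is effective and witnesses $\mult_E N_s\leq\mult_E N_f$, so $N_s\leq N_f$. Thus $P_f=P_s$ and $(3)$ holds. This is exactly the paper's two-line argument; once you carry out the Fujita-maximality trick that you allude to in step~(i), you are already done, and steps (ii)--(iv) never enter.
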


\begin{proof}
$(1) \Rightarrow (2)$: It is easy to check that the divisorial Zariski decomposition with the nef positive part is the Fujita-Zariski decomposition (see \cite[III.1.17.Remark]{nakayama}).\\
$(2) \Rightarrow (3)$:
%Note that $\kappanu(P_f)  = \kappa(P_f) = \kappa(D) \leq \kappanu(D)$. By \cite[Theorem 1.1 (5)]{lehmann-nu} and the maximal property of the positive part of the Fujita-Zariski decomposition, we see that $\kappanu(D) \leq \kappanu(P_f)$. Thus $D$ is abundant.
Let $D=P_s+N_s$ be the $s$-decomposition. Then $P_f \geq P_s$ by definition. Since $P_f$ is semiample, we also have $P_f \leq P_s$. Therefore $P_f=P_s$. \\
$(3) \Rightarrow (1)$: It follows from Lemma \ref{abundantdiv=s}.
\end{proof}

%By Proposition \ref{goodzd}, we can obtain the following definition.

\begin{definition}\label{defbirgoodzd}
If one of the conditions in Proposition \ref{goodzd} holds for an abundant $\Q$-divisor $D$, then we say that $D$ \emph{admits the good Zariski decomposition}, and denote it by $D=P+N$.
We say that $D$ \emph{admits the birational good Zariski decomposition} if there exists a birational morphism $f \colon \widetilde{X} \to X$ from a smooth projective variety  such that $f^*D$ admits the good Zariski decomposition.
\end{definition}

\begin{proposition}\label{qgoodzd}
Let $D$ be a pseudoeffective $\Q$-divisor with the good Zariski decomposition $D=P+N$.
Then $P, N$ are also $\Q$-divisors.
\end{proposition}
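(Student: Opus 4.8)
The plan is to reduce everything to the $s$-decomposition and then to use finite generation of the section ring to force rationality of the fixed loci. Recall that, by Definition~\ref{defbirgoodzd}, the hypothesis includes that $D$ is abundant, so Proposition~\ref{goodzd} applies; by its part $(3)$ we may assume $D=P+N$ is the $s$-decomposition of $D$ with $P=P_s$ semiample and $N=N_s$. Hence it suffices to prove that $N_s$ is a $\Q$-divisor, since then $P_s=D-N_s$ is automatically one as well.

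The first step is to see that $R(X,D)$ is finitely generated. Since $P_s$ is semiample, its section ring $R(X,P_s)$ is finitely generated, and by the characterization of $P_s$ recalled in the excerpt one has $R(X,P_s)\simeq R(X,D)$ (\cite[Proposition~4.8]{P}); hence $R(X,D)$ is finitely generated. (Equivalently, this finite generation is contained in Proposition~\ref{zdabfg}.) Therefore we may choose a positive integer $m_0$, divisible by the denominator of $D$ so that $m_0D$ is a $\Z$-divisor, for which the Veronese subalgebra $\bigoplus_{k\ge 0}H^0(X,km_0D)$ is generated in degree one.

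Generation in degree one forces the fixed divisor of $|km_0D|$ to equal $k$ times the fixed divisor of $|m_0D|$ for every $k\ge 1$: every section of $km_0D$ is a sum of $k$-fold products of sections of $m_0D$, the divisor of a product is the sum of the divisors, and the divisor of a sum dominates the minimum of the divisors. On the other hand, as established in the proof of Lemma~\ref{abundantdiv=s} via \cite[Proposition~6.4]{lehmann-red}, for every prime divisor $E$ one has
\[
\mathrm{coeff}_E(N_s)=\ord_E(||D||)=\lim_{m\to\infty}\frac1m\,\mathrm{coeff}_E\bigl(\text{fixed divisor of }|\lfloor mD\rfloor|\bigr).
\]
Evaluating this limit along the subsequence $m=km_0$ gives $\mathrm{coeff}_E(N_s)=\tfrac1{m_0}\,\mathrm{coeff}_E(\text{fixed divisor of }|m_0D|)\in\Q$. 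Since only finitely many prime divisors occur in $N_s$, we conclude that $N_s$, and hence $P_s=D-N_s$, is a $\Q$-divisor.

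The step I expect to require the most care is the finite generation of $R(X,D)$: one must deduce it from the semiampleness of the \emph{$\R$-divisor} $P_s$, which means unwinding the definition of a semiample $\R$-divisor — writing $P_s$ as a positive real combination of semiample $\Q$-divisors, equivalently as the pullback of an ample $\R$-divisor under a fibration $\phi\colon X\to Y$ — and reducing the finite generation of $R(X,P_s)$ to the classical case of ample divisors on $Y$, for instance by realizing $R(X,P_s)$ inside a finitely generated multisection ring on $Y$; if Proposition~\ref{zdabfg} is available at this point one can quote it instead. The remaining verifications (the exact multiplicativity of the fixed divisors along $m_0$-divisible degrees, and the bookkeeping with floors relating the $s$-decomposition, defined via $\sim_{\R}$, to the integral linear systems $|km_0D|$) are routine.
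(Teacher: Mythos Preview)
Your strategy is sound once finite generation of $R(X,D)$ is in hand, and you correctly flag this as the delicate step; unfortunately neither of your proposed routes closes the gap. Quoting Proposition~\ref{zdabfg} is circular: in the paper it appears \emph{after} Proposition~\ref{qgoodzd}, and its forward direction is proved precisely by first invoking Proposition~\ref{qgoodzd} to conclude that $P$ is a $\Q$-divisor and only then applying Proposition~\ref{semiampleabundant}. The direct route fails too: embedding $R(X,P_s)$ in a finitely generated multisection ring does not make it finitely generated (subrings of finitely generated rings need not be), and in fact section rings of semiample $\R$-divisors are \emph{not} finitely generated in general. Already on $\P^1$ with $P=\alpha\cdot p$ for irrational $\alpha>0$, the ring $\bigoplus_m H^0(\P^1,\lfloor m\alpha\rfloor p)$ is not finitely generated: for any bound $N$ on the generator degrees and all $m\gg 0$, every product of sections from degrees $\le N$ lands in the proper subspace of sections vanishing at $p$. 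So the implication ``$P_s$ semiample $\Rightarrow R(X,P_s)$ finitely generated'' is false for $\R$-divisors unless one already knows $P_s$ is a $\Q$-divisor, which is the conclusion you are after.

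The paper's proof sidesteps section rings entirely and argues numerically. Write $P\sim_{\R}\sum a_iP_i$ with $P_i$ semiample Cartier and $N=\sum b_jN_j$ with $N_j$ prime; by \cite[III.1.10]{nakayama} the classes $[N_j]$ are linearly independent in $N^1(X)_{\R}$. The crux is that the rational subspaces $V_P=\operatorname{span}\{[P_i]\}$ and $V_N=\operatorname{span}\{[N_j]\}$ meet only in $0$: if a nonzero class $\eta\equiv P'\equiv N'$ lay in both, then $D\equiv(P-rP')+(N+rN')$ would still have nef first summand and effective second summand for all small $|r|$, and the maximality in \cite[III.1.14(2)]{nakayama} would force $rN'\ge 0$ for both signs of $r$, a contradiction. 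Since $[D]$ is rational and the projection $V_P\oplus V_N\to V_N$ is defined over $\Q$, the class $[N]$ is rational, and linear independence of the $[N_j]$ then gives each $b_j\in\Q$.
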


\begin{proof}
Since $P$ is semiample, there exists a morphism $f \colon X \to Y$ such that $P \sim_{\R} f^* A$ where $A$ is an ample divisor on $Y$. The ample divisor $A$ can be written as a finite sum of ample Cartier divisors on $Y$ with positive real coefficients. Thus we can write $P \sim_{\R} \sum_{i=1}^k a_i P_i$ for some semiample Cartier divisors $P_i$ and some positive real numbers $a_i$.
%There exists a morphism $f \colon X \to Y$ such that $P \sim_{\R} f^* A$ where $A$ is an ample $\R$-divisor on $Y$. We can write $A \equiv \sum_{i=1}^k a_i' A_i'$ for some positive real numbers $a_i$ where  $A_i'$ are ample Cartier divisors. Now by taking general members $A_i \in |m_i A_i'|$ for $m_i \gg 0$, we may assume that $A_i$ are prime ample divisors such that $f^* A_i$ are also prime divisors. Let $P_i := f^*A_i$ and $a_i:=\frac{a_i'}{m_i}$ so that $P \equiv \sum_{i=1}^k a_i P_i$.
Now we write $N = \sum_{j=1}^m b_j N_j$ for prime divisors $N_1, \ldots, N_m$ and positive real numbers $b_j$. Then $N_1, \ldots, N_m$ are  linearly independent in $\N^1(X)_{\R}$ by \cite[III.1.10.Proposition]{nakayama}.
Let $V_P$ and $V_N$ be the subspaces of $\N^1(X)_\R$ spanned by $\{P_i\}_{i=1}^k$ and $\{N_j\}_{j=1}^m$, respectively.
We now claim that $V_P\cap V_N=\{0\}$.
%$P_1, \ldots, P_k, E_1, \ldots, E_m$ are linearly independent in $\N^1(X)_{\R}$.
Suppose that the claim does not hold.
Then there exists a nonzero class $\eta\in V_P\cap V_N$ such that $\eta\equiv P'\equiv N'$
where $P'\in \bigoplus_{i=1}^k \R\cdot P_i$ and $N'\in \bigoplus_{j=1}^m \R\cdot N_j$.
Note that there exists a positive number $\epsilon>0$ such that for any real number $r$ satisfying $|r|<\epsilon$,
the divisor $P-r P'$ is nef and $N+r N'$ is effective.
%for any real number $\epsilon$ with sufficiently small $|\epsilon|>0$, the divisor $P-\epsilon P'$ is semiample, hence nef.
%We may also assume that $N-\epsilon N'$ is effective.
Thus \cite[Proposition III.1.14 (2)]{nakayama} implies that in the following decompositions
$$
\begin{array}{rl}
D&=P+N\\
&\equiv (P-r P')+(N+r N'),
\end{array}
$$
we have $N\leq N+r N'$, hence $0\leq r N'$ for any $r$ such that $|r|<\epsilon$.
However, since $N'$ is a nonzero divisor, this is a contradiction.
The claim implies that if $D$ is a $\Q$-divisor, then so is $N$ in the decomposition $D = P+N$.
Therefore $P, N$ are both $\Q$-divisors.
\end{proof}

%\footnote{
%In the big case, we can use Thrm 5.5 \& Coro 5.7 of Boucksom's paper [Divisorial.....].
%
%
%Similarly in the pseudoeffective case, the following should be enough
%
%\begin{lemma}\label{lem-H^0(kP)=H^0(kD)}
%Let $D$ be an abundant integral pseudoeffective divisor and suppose that $D=P+N$ is the good Zariski decomposition.
%Then it is the unique decomposition into a semiample divisor $P$ and an effective divisor $N$ such that the canonical map
%$H^0(\lfloor kP\rfloor)\to H^0(kD)$ is an isomorphism for each $k>0$.
%\end{lemma}
%\begin{proof}
%The isomorphism $H^0(\lfloor kP\rfloor)\to H^0(kD)$ can be proven similarly as in Boucksom's paper.
%The uniqueness is by (1)$\Leftarrow$(3) of Proposition \ref{goodzd}  since the given isomorphism also implies $R(X,D)\cong R(X,P)$.
%\end{proof}
%
%\begin{corollary}
%Let $D$ be a pseudoeffective $\Q$-divisor with the good Zariski decomposition $D=P+N$.
%Then $P, N$ are also $\Q$-divisors.
%\end{corollary}
%\begin{proof}
%Note that if $kD$ is an integral divisor for some $k>0$, then $kD=kP+kN$ is also the good Zariski decomposition with $kP$ semiample.
%Thus we may assume that $D$ is an integral divisor.
%Then the statement follows from Lemma \ref{lem-H^0(kP)=H^0(kD)}.
%\end{proof}
%}

Now, we characterize when a divisor admits the birational good Zariski decomposition.

\begin{proposition}\label{zdabfg}
Let $D$ be a pseudoeffective $\Q$-divisor on $X$. Then
$D$ admits the birational good Zariski decomposition if and only if $D$ is abundant and $R(X, D)$ is finitely generated.
\end{proposition}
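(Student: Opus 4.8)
The plan is to prove the two implications separately, using Proposition~\ref{goodzd} to pass between the various forms of Zariski decomposition and Proposition~\ref{semiampleabundant} (the characterization of semiampleness) as the key technical input.

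\textbf{The ``only if'' direction.} Suppose $D$ admits the birational good Zariski decomposition, so there is a birational morphism $f\colon \widetilde X\to X$ with $f^*D=P+N$ the good Zariski decomposition on $\widetilde X$; here $P$ is semiample and, by Proposition~\ref{qgoodzd}, $P$ and $N$ are $\Q$-divisors. First, abundance: since $P$ is semiample it is abundant, and $\kappa(P)=\kappanu(P)$ is unchanged by adding the $f$-exceptional-plus-effective part $N$ (the negative part does not change either invariant, as $H^0(\widetilde X,\lfloor mP\rfloor)=H^0(\widetilde X,\lfloor m f^*D\rfloor)$ and $\kappanu$ is numerical and insensitive to the negative part by the defining property of the divisorial Zariski decomposition), so $f^*D$ is abundant; since $\kappa$ and $\kappanu$ are birational invariants (pulling back by $f$ and pushing forward), $D$ is abundant. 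Second, finite generation: $R(\widetilde X, f^*D)\simeq R(\widetilde X, P)$ because $P=P_s$ is the $s$-part, and $R(\widetilde X, P)$ is finitely generated since $P$ is semiample (the section ring of a semiample divisor is finitely generated — this is the ``only if'' half of Proposition~\ref{semiampleabundant}, or can be seen directly from $P\sim_{\Q}f'^*A$ for a morphism $f'$ and ample $A$). Finally $R(X,D)\simeq R(\widetilde X, f^*D)$ since $f$ is birational and $X$ is normal (pullback induces an isomorphism on sections), so $R(X,D)$ is finitely generated.

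\textbf{The ``if'' direction.} Assume $D$ is abundant and $R(X,D)$ is finitely generated. The strategy is to resolve the finitely generated ring: there is a birational morphism $f\colon \widetilde X\to X$ such that on $\widetilde X$ the moving part of $\lfloor mf^*D\rfloor$ becomes free for all sufficiently divisible $m$, i.e.\ the $s$-decomposition $f^*D=P_s+N_s$ has $P_s$ semiample. Concretely, $\mathrm{Proj}\,R(X,D)$ gives a rational map $X\dashrightarrow Z$; taking $\widetilde X$ to be a resolution of its graph, the pullback $f^*D$ has $s$-positive part $P_s$ which is the pullback of an ample divisor on $Z$ up to $\Q$-linear equivalence (using finite generation to know the algebra is generated in bounded degree), hence $P_s$ is semiample. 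Since $\kappa$ and $\kappanu$ are birational invariants, $f^*D$ is still abundant. Now $f^*D=P_s+N_s$ is the $s$-decomposition with $P_s$ semiample and $f^*D$ abundant, so condition $(3)$ of Proposition~\ref{goodzd} holds, meaning $f^*D$ admits the good Zariski decomposition; therefore $D$ admits the birational good Zariski decomposition.

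\textbf{Main obstacle.} The delicate point is the ``if'' direction: producing, from the abstract finite generation of $R(X,D)$, an explicit birational model $\widetilde X$ on which the $s$-positive part of $f^*D$ is genuinely semiample (not merely with free moving part along one linear system, but asymptotically). This is exactly the content of the classical fact that a finitely generated section ring becomes a ``birationally semiample'' situation after a suitable blow-up — one resolves the base locus of the system defining $\mathrm{Proj}\,R(X,D)$ and checks, using boundedness of generators, that all higher multiples are pulled back from the base. Once this resolution step is in hand, abundance is preserved freely and Proposition~\ref{goodzd}$(3)\Rightarrow$ the good Zariski decomposition does the rest. I would also take care that $D$ being merely $\Q$-effective (not necessarily effective) causes no trouble: $\lfloor mD\rfloor$ is eventually effective since $D$ is pseudoeffective with finitely generated section ring, and one works with sufficiently divisible $m$ throughout.
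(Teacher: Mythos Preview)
Your proposal is correct and follows essentially the same approach as the paper. The only differences are cosmetic: the paper gets abundance in the ``only if'' direction for free from Definition~\ref{defbirgoodzd} (which already requires abundance), and in the ``if'' direction the paper phrases the resolution step as a log resolution of the base ideal of $|mD|$ for sufficiently divisible $m$ rather than via $\mathrm{Proj}\,R(X,D)$, but this is the same construction and both conclude through Proposition~\ref{goodzd}(3).
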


\begin{proof}
Suppose that there exists a birational morphism $f \colon \widetilde{X} \to X$ from a smooth projective variety such that $f^*D = P+N$ is the good Zariski decomposition. By definition, $D$ is abundant. Note that  $R(X, D) \simeq R(\widetilde{X}, f^*D) \simeq R(\widetilde{X}, P)$. Since $P$ is a semiample $\Q$-divisor by Proposition \ref{qgoodzd}, it follows from Proposition \ref{semiampleabundant} that $R(X, D)$ is finitely generated. Conversely, suppose that $D$ is abundant and $R(X, D)$ is finitely generated. For a sufficiently large and divisible integer $m>0$, we take a resolution $f \colon \widetilde{X} \to X$ of the base locus of $|mD|$ and consider the decomposition $f^*(mD)=M+F$ into the base point free $M$ and the fixed part $F$ of $|f^*mD|$
By the finite generation of $R(X, D)$, we see that $f^*D=\frac{1}{m}M+\frac{1}{m}F$ is the $s$-decomposition with semiample positive part. By Proposition \ref{goodzd}, $f^*D$ admits the good Zariski decomposition.
\end{proof}

\section{Okounkov bodies}\label{okbdsubsec}
In this section, we recall the construction of Okounkov bodies associated to pseudoeffective divisors in \cite{lm-nobody}, \cite{KK}, and \cite{CHPW1} and basic results. In the end, we prove Theorem \ref{newthrm} (=Theorem \ref{newtheorem}).

%Fix an admissible flag on $X$
%$$
%Y_\bullet: X=Y_0\supseteq Y_1\supseteq\cdots \supseteq Y_{n-1}\supseteq Y_n=\{x\}
%$$
%where each $Y_i$ is an irreducible subvariety of codimension $i$ in $X$ and is smooth at $x$.
%For a Cartier divisor $D$ on $X$ and a section $s \in H^0(X, D) \setminus \{0\}$, we define a valuative-like function
%$$\nu_{Y_{\bullet}}(s):=(\nu_1(s), \cdots, \nu_n(s)) \in \Z_{\geq 0}^n$$
%as follows.
%Set $\nu_1(s):=\ord_{Y_1} (s)$. Using a local equation $f$ for $Y_1$ in $X$, we define a section
%$s'_1=s \otimes f^{-\nu_1(s)} \in H^0(X, D-\nu_1(s)Y_1))$. Since $s'_1$ does not vanish identically along $Y_1$, its restriction $s'_1|_{Y_1}$ defines a nonzero section $s_1:=s'_1|_{Y_1} \in H^0(Y_1,(D-\nu_1(s)Y_1)|_{Y_1})\setminus\{0\}$. Now set $\nu_2(s):=\ord_{Y_2}(s_1)$, which is independent of the choiceof the local equation $f$. Continuing this process inductively, we obtain set of the numbers $\nu_1(s),\nu_2(s),\cdots,\nu_n(s)$ which define the function $\nu_{Y_\bullet}(s)\in \Z_{\geq 0}^n$.

First, fix an admissible flag on $X$
$$
Y_\bullet: X=Y_0\supseteq Y_1\supseteq\cdots \supseteq Y_{n-1}\supseteq Y_n=\{x\}
$$
where each $Y_i$ is an irreducible subvariety of codimension $i$ in $X$ and is smooth at $x$.
Let $D$ be an $\R$-divisor on $X$ with $|D|_{\R}:=\{ D' \mid D \sim_{\R} D' \geq 0 \}\neq\emptyset$.
We define a valuation-like function
$$
\nu_{Y_\bullet}:|D|_{\R}\to \R_{\geq0}^n
$$
as follows.
For $D'\in |D|_\R$, let
$$\nu_1=\nu_1(D'):=\ord_{Y_1}(D').$$
Since $D'-\nu_1(D')Y_1$ is effective, we can define
$$\nu_2=\nu_2(D'):=\ord_{Y_2}((D'-\nu_1Y_1)|_{Y_1}).$$
If $\nu_i=\nu_i(D')$ is defined, then we define $\nu_{i+1}=\nu_{i+1}(D')$ inductively as
$$\nu_{i+1}(D'):=\ord_{Y_{i+1}}((\cdots((D'-\nu_1Y_1)|_{Y_1}-\nu_2Y_2)|_{Y_2}-\cdots-\nu_iY_i)|_{Y_{i}}).$$
The values $\nu_i(D')$ for $1 \leq i$ obtained as above define $\nu_{Y_\bullet}(D')=(\nu_1(D'),\nu_2(D'),\cdots,\nu_n(D'))$.

\begin{definition}
The \emph{Okounkov body} $\okbd_{Y_\bullet}(D)$ of a big $\R$-divisor $D$ with respect to an admissible flag $Y_\bullet$ is defined as the closure of the convex hull of $\nu_{Y_\bullet}(|D|_{\R})$ in $\R^n_{\geq 0}$.
%where we set $|D|_{\R}:=\{ D' \mid D \sim_{\R} D' \geq 0 \}$.
\end{definition}

More generally, a similar construction can be applied to a graded linear series $W_\bullet$ on $X$ to construct the Okounkov body $\okbd_{Y_\bullet}(W_\bullet)$ of $W_\bullet$. For more details, we refer to \cite{lm-nobody}.

When $D$ is not big, we have the following extension introduced in \cite{CHPW1}.

\begin{definition}[{\cite[Definitions 1.1 and 1.2]{CHPW1}}]
Let $D$ be an $\R$-divisor on $X$.
\begin{enumerate}[leftmargin=0cm,itemindent=.6cm]
\item[(1)] When $D$ is effective up to $\sim_\R$, i.e., $|D|_{\R}\neq \emptyset$, the \emph{valuative Okounkov body} $\okval_{Y_\bullet}(D)$ of $D$ with respect to an admissible flag $Y_\bullet$ is defined as the closure of the convex hull of $\nu_{Y_\bullet}(|D|_{\R})$ in $\R^n_{\geq 0}$.
If $|D|_\R=\emptyset$, then we set $\okval_{Y_\bullet}(D):=\emptyset$.
\item[(2)] When $D$ is pseudoeffective, the \emph{limiting Okounkov body} $\oklim_{Y_\bullet}(D)$ of $D$ with respect to an admissible flag $Y_\bullet$ is defined as
$$\oklim_{Y_\bullet}(D):=\lim_{\epsilon \to 0+}\okbd_{Y_\bullet}(D+\epsilon A) = \bigcap_{\epsilon >0} \okbd_{Y_\bullet}(D+\epsilon A),$$
 where $A$ is an ample divisor on $X$.
(Note that $\oklim_{Y_\bullet}(D)$ is independent of the choice of $A$.)
If $D$ is not pseudoeffective, we set $\oklim_{Y_\bullet}(D)
:=\emptyset$.
\end{enumerate}\end{definition}

\begin{remark}
Boucksom's numerical Okounkov body $\oknum_{Y_\bullet}(D)$ in \cite{B2} is the same as our limiting Okounkov body $\oklim_{Y_\bullet}(D)$.
\end{remark}

Suppose that $D$ is effective.
By definition, $\okval_{Y_\bullet}(D) \subseteq \oklim_{Y_\bullet}(D)$, and the inclusion can be strict in general (see \cite[Examples 4.2 and 4.3]{CHPW1}).
Moreover, by \cite[Proposition 3.3 and Lemma 4.8]{B2}, we have
$$\dim \okval_{Y_\bullet}(D) = \kappa (D) \leq \dim \oklim_{Y_\bullet}(D) \leq \kappanu(D).$$

The following lemmas will be useful for computing Okounkov bodies.

\begin{lemma}\label{okbdbir}
Let $D$ be an $\R$-divisor on $X$. Consider a birational morphism $f : \widetilde{X} \to X$ with $\widetilde{X}$ smooth and an admissible flag
$$
\widetilde{Y}_\bullet : \widetilde{X}=\widetilde{Y}_0 \supseteq \widetilde{Y}_1 \supseteq \cdots \supseteq \widetilde{Y}_{n-1} \supseteq \widetilde{Y}_n=\{ x' \}.
$$
on $\widetilde{X}$. Suppose that $Y_n$ is a general point in $X$ and
$$
Y_\bullet:=f(\widetilde{Y}_\bullet) : X=Y_0 \supseteq Y_1=f(\widetilde{Y}_1) \supseteq \cdots \supseteq Y_{n-1}=f(\widetilde{Y}_{n-1}) \supseteq Y_n=f(\widetilde{Y}_n)=\{ f(x') \}.
$$ is an admissible flag on $X$. Then we have
$\okval_{\widetilde{Y}_\bullet}(f^*D)=\okval_{Y_\bullet}(D)$ and
$\oklim_{\widetilde{Y}_\bullet}(f^*D) = \oklim_{Y_\bullet}(D)$.
\end{lemma}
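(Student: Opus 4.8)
The plan is to reduce the statement for $f^*D$ on $\widetilde X$ to the corresponding statement for $D$ on $X$ by comparing valuation vectors termwise. The key point is that $f$ is an isomorphism over a neighborhood of the general point $Y_n = f(\widetilde Y_n)$, so the admissible flags $\widetilde Y_\bullet$ and $Y_\bullet$ agree locally near the chosen point. First I would treat the valuative case. Given $D' \in |D|_{\R}$, we have $f^*D' \in |f^*D|_{\R}$, and conversely every element of $|f^*D|_{\R}$ is of the form $f^*D' + E$ where $E$ is an $f$-exceptional effective divisor and $D' \in |D|_{\R}$ (push forward to $X$). Since $Y_n$ is a general point, none of the $Y_i$ for $i \geq 1$ is contained in the (lower-dimensional) image $f(\operatorname{Exc}(f))$, so the exceptional locus does not meet a neighborhood of the flag; hence near $x'$ the divisor $E$ vanishes and contributes nothing to any $\nu_j$. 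Thus $\nu_{\widetilde Y_\bullet}(f^*D' + E) = \nu_{\widetilde Y_\bullet}(f^*D')$, and it remains to show $\nu_{\widetilde Y_\bullet}(f^*D') = \nu_{Y_\bullet}(D')$. This last equality is proved by induction on $i$: the order of vanishing $\operatorname{ord}_{\widetilde Y_i}$ of a pullback along an isomorphism near the generic point of $\widetilde Y_i$ equals $\operatorname{ord}_{Y_i}$ of the original, and the successive restrictions $(\cdots)|_{\widetilde Y_i}$ correspond under $f$ to the restrictions $(\cdots)|_{Y_i}$ because $f|_{\widetilde Y_i} \colon \widetilde Y_i \to Y_i$ is birational and an isomorphism near the relevant point. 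Taking closures of convex hulls of the resulting (equal) sets of valuation vectors gives $\okval_{\widetilde Y_\bullet}(f^*D) = \okval_{Y_\bullet}(D)$.

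For the limiting case I would first record the special case of big divisors: if $D$ is big then the argument above (or the standard fact, e.g. \cite[Theorem 4.26]{lm-nobody} type reasoning together with $\okval = \okbd$ for big divisors) gives $\okbd_{\widetilde Y_\bullet}(f^*D) = \okbd_{Y_\bullet}(D)$. Actually, for big $D$ the valuative Okounkov body coincides with the classical Okounkov body, so the previous paragraph already yields $\okbd_{\widetilde Y_\bullet}(f^*D) = \okbd_{Y_\bullet}(D)$. Now fix an ample divisor $A$ on $X$ and an ample divisor $\widetilde A$ on $\widetilde X$. For the definition of the limiting body we may use any ample class, and $f^*A + \eps \widetilde A$ is ample on $\widetilde X$ for all $\eps > 0$; moreover $f^*(D + tA) = f^*D + t f^*A$ is big for $t > 0$, so
$$
\okbd_{\widetilde Y_\bullet}(f^*D + t f^*A) = \okbd_{Y_\bullet}(D + tA)
$$
by the big case. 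Letting $t \to 0+$ on the left gives a nested intersection computing $\oklim_{\widetilde Y_\bullet}(f^*D)$ with respect to the ample class $f^*A$ perturbed slightly, or more cleanly: since $\oklim$ is independent of the choice of ample divisor, I can compute $\oklim_{\widetilde Y_\bullet}(f^*D) = \bigcap_{t > 0} \okbd_{\widetilde Y_\bullet}(f^*D + t f^*A + t^2 \widetilde A)$ and compare with $\bigcap_{t>0}\okbd_{Y_\bullet}(D + tA)$ using that the pullback of $D + tA$ plus a small exceptional-supported perturbation does not affect the Okounkov body (by the same local-triviality-near-$x'$ argument applied to the big divisors $f^*D + tf^*A + t^2\widetilde A$ whose pushforward is $D + tA$). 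In the limit both intersections equal the respective limiting bodies, so $\oklim_{\widetilde Y_\bullet}(f^*D) = \oklim_{Y_\bullet}(D)$.

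The main obstacle, and the place that needs genuine care rather than routine bookkeeping, is the two-step perturbation in the limiting case: one must perturb by $f^*A$ to make $f^*(D+tA)$ big \emph{and} absorb the extra flexibility coming from $f$-exceptional divisors, since an arbitrary ample class on $\widetilde X$ is not pulled back from $X$. The clean way around this is to exploit that $\oklim$ does not depend on the ample divisor, so it suffices to verify the equality $\okbd_{\widetilde Y_\bullet}(f^*D + \xi) = \okbd_{Y_\bullet}(D + f_*\xi)$ for a cofinal family of ample perturbations $\xi$ on $\widetilde X$ whose pushforwards are ample on $X$; establishing that such a cofinal family exists and that pushforward behaves well on it is the real content. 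Everything else — the inductive termwise comparison of $\nu$, the vanishing of exceptional contributions near the general point $Y_n$, passing to closures of convex hulls, and commuting the limit with the intersection — follows the template of \cite[Proposition 4]{AKL} and standard Okounkov body manipulations, and I would present it compactly.
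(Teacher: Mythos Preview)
Your outline is correct and follows the standard route; the paper itself gives no argument beyond citing \cite[Lemma 3.3]{CHPW2} for the limiting case and declaring the valuative case an exercise. Two points deserve tightening.

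First, the exceptional remainder $E$ in your valuative argument is in fact always zero: since principal divisors on $\widetilde X$ are exactly pullbacks of principal divisors on $X$, one has $|f^*D|_\R = f^*|D|_\R$ as sets (equivalently, an $f$-exceptional $\R$-divisor that is $\R$-linearly trivial must vanish, by the negativity lemma). So no discussion of exceptional support is needed there; the bijection $D'\leftrightarrow f^*D'$ together with your inductive comparison of the $\nu_i$ already gives $\nu_{\widetilde Y_\bullet}(|f^*D|_\R)=\nu_{Y_\bullet}(|D|_\R)$.

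Second, in the limiting case the phrase ``a small exceptional-supported perturbation does not affect the Okounkov body'' is not the right mechanism, since $\widetilde A$ is not exceptional. The clean sandwich is to observe that $\{\epsilon f^*A + \delta\widetilde A : \epsilon,\delta>0\}$ is a cofinal family of ample classes on $\widetilde X$, so
\[
\oklim_{\widetilde Y_\bullet}(f^*D) \;=\; \bigcap_{\epsilon,\delta>0}\okbd_{\widetilde Y_\bullet}\bigl(f^*D + \epsilon f^*A + \delta\widetilde A\bigr),
\]
and then use $\okbd_{\widetilde Y_\bullet}(f^*D+\epsilon f^*A)\subseteq \okbd_{\widetilde Y_\bullet}(f^*D+\epsilon f^*A+\delta\widetilde A)$ together with continuity of Okounkov bodies on the big cone (as $\delta\to 0^+$, since $f^*(D+\epsilon A)$ is big) to identify the double intersection with $\bigcap_{\epsilon>0}\okbd_{\widetilde Y_\bullet}(f^*(D+\epsilon A)) = \bigcap_{\epsilon>0}\okbd_{Y_\bullet}(D+\epsilon A) = \oklim_{Y_\bullet}(D)$, the middle equality being your big case.
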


\begin{proof}
The limiting Okounkov body case is shown in \cite[Lemma 3.3]{CHPW2}. The proof for the valuative Okounkov body case is almost identical and we leave the details to the readers as an exercise.
\end{proof}

\begin{lemma}\label{okbdzd}
Let $D$ be an $\R$-divisor on $X$ with the $s$-decomposition $D=P_s+N_s$ and the divisorial Zariski decomposition $D=P_\sigma+N_\sigma$. Fix an admissible flag $Y_\bullet$ on $X$ such that $Y_n$ is a general point in $X$. Then we have $\okval_{Y_\bullet}(D)=\okval_{Y_\bullet}(P_s)$ and $\oklim_{Y_\bullet}(D)=\oklim_{Y_\bullet}(P_\sigma)$, respectively.
\end{lemma}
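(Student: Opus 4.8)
The plan is to prove the two equalities separately, in each case by reducing the divisor to its positive part and checking that the relevant valuations are unaffected. For the valuative statement $\okval_{Y_\bullet}(D)=\okval_{Y_\bullet}(P_s)$, recall that the $s$-decomposition is characterized by the property that $P_s$ is the smallest divisor with $P_s\le D$ and $R(X,P_s)\simeq R(X,D)$ (\cite[Proposition 4.8]{P}); in particular, for every $m>0$ the natural inclusion gives an identification $|\lfloor mP_s\rfloor|=|\lfloor mD\rfloor|-\lfloor mN_s\rfloor$, i.e. every effective divisor $\R$-linearly equivalent to $D$ is of the form $D'=D''+N_s$ with $D''\in|P_s|_\R$. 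Thus the graded linear series of $D$ and of $P_s$ differ only by the fixed part $N_s$. First I would show that the valuation vector is simply shifted by $\nu_{Y_\bullet}(N_s)$; however, since $Y_n$ is a \emph{general} point of $X$, and more to the point since the flag can be chosen generically, the components $Y_1,\dots,Y_{n-1}$ of the flag are not among the prime components of $N_s$, and a general point $x$ avoids $\Supp N_s$, so $\nu_{Y_\bullet}(N_s)=0$ and hence $\nu_{Y_\bullet}(D')=\nu_{Y_\bullet}(D'-N_s)$ for all $D'\in|D|_\R$. Wait — the flag is \emph{given}, not chosen, so I must instead argue using that $Y_n$ general forces $x\notin\Supp N_s$, giving $\ord_{Y_1}(N_s)$ contributes but then restricting to $Y_1$ and noting $x$ general still works; more carefully, one uses that $\nu_{Y_\bullet}$ is additive on sums $D''+N_s$ and that $\nu_{Y_\bullet}(N_s)$ has the same value for the fixed divisor $N_s$, so the two Okounkov bodies differ by translation by the constant vector $\nu_{Y_\bullet}(N_s)$, which lies in the image; comparing dimensions $\dim\okval_{Y_\bullet}(D)=\kappa(D)=\kappa(P_s)=\dim\okval_{Y_\bullet}(P_s)$ and using that $\okval_{Y_\bullet}(P_s)$ already contains the origin (as $P_s$ is the positive part) forces $\nu_{Y_\bullet}(N_s)=0$ and the translation to be trivial.

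For the limiting statement $\oklim_{Y_\bullet}(D)=\oklim_{Y_\bullet}(P_\sigma)$, I would first record the analogous numerical fact: by \cite[Proposition 6.4]{lehmann-red} (used in the proof of Lemma \ref{abundantdiv=s}) and the very definition of $N_\sigma$, for any ample $A$ and $\eps>0$ one has $\ord_{Y_1}(\|D+\eps A\|)\to\ord_{Y_1}(\|D\|)=\mathrm{coeff}_{Y_1}(N_\sigma)$, so perturbing $D$ by $\eps A$ and passing to the positive part commutes with the limit defining $\oklim$. Concretely, $P_\sigma+\eps A$ and $D+\eps A$ are big $\R$-divisors whose divisorial Zariski negative parts converge to $N_\sigma$ and $0$ respectively, and a general point $x$ (resp. general flag position) lies outside $\mathbf{B}_-(D)\supseteq\Supp N_\sigma$; hence $\okbd_{Y_\bullet}(D+\eps A)$ and $\okbd_{Y_\bullet}(P_\sigma+\eps A)$ differ by translation by $\nu_{Y_\bullet}(N_\sigma^{(\eps)})$ where $N_\sigma^{(\eps)}\to N_\sigma$, and taking $\eps\to0$ together with the general choice of $x$ kills this translation. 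Thus $\oklim_{Y_\bullet}(D)=\lim_\eps\okbd_{Y_\bullet}(D+\eps A)=\lim_\eps\okbd_{Y_\bullet}(P_\sigma+\eps A)=\oklim_{Y_\bullet}(P_\sigma)$.

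I expect the main obstacle to be the bookkeeping around the \emph{given} (not freely chosen) admissible flag: one must leverage only the hypothesis that $Y_n$ is a general point to conclude that the negative parts $N_s$, $N_\sigma$ contribute nothing to $\nu_{Y_\bullet}$. The cleanest route is probably to observe that additivity $\nu_{Y_\bullet}(D'')+\nu_{Y_\bullet}(N)=\nu_{Y_\bullet}(D''+N)$ holds whenever $N$ is a fixed effective divisor and the flag meets $\Supp N$ properly at $x$ — which is guaranteed by $x$ general — so that $\okval_{Y_\bullet}(D)$ is a translate of $\okval_{Y_\bullet}(P_s)$ by the fixed vector $\nu_{Y_\bullet}(N_s)$; then the dimension identity $\dim\okval_{Y_\bullet}(D)=\kappa(D)=\kappa(P_s)$ from \cite[Proposition 3.3]{B2} and the fact that $\okval_{Y_\bullet}(P_s)$ contains $0$ in its closure (take a sequence of divisors in $|P_s|_\R$ with valuation vectors approaching $0$, which exists precisely because $P_s$ has empty fixed part) pin down the translation vector as $0$. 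The limiting case then follows by the same translation argument applied uniformly in $\eps$, using that $\bigcap_{\eps>0}$ commutes with a convergent family of translations whose translation vectors tend to $0$.
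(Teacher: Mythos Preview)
Your initial instinct for the valuative case is correct and is essentially what the paper does (in one line, citing $R(X,D)\simeq R(X,P_s)$ and the construction): since $|D|_\R=N_s+|P_s|_\R$ and $\nu_{Y_\bullet}$ is additive on effective divisors, the two bodies differ by the translation $\nu_{Y_\bullet}(N_s)$; and ``$Y_n$ general'' means, by the convention in the Remark following Theorem~\ref{chpwmain}, that $x\notin\SB(D)\supseteq\Supp N_s$. Because $\nu_{Y_\bullet}$ is computed locally at $x$, this gives $\nu_{Y_\bullet}(N_s)=0$ immediately. You say exactly this, and then abandon it.

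The alternative argument you retreat to has a genuine gap. Knowing that $\okval_{Y_\bullet}(D)=\okval_{Y_\bullet}(P_s)+v$ with $v\geq 0$, that both bodies have dimension $\kappa(D)$, and that $0\in\okval_{Y_\bullet}(P_s)$, does \emph{not} force $v=0$: already in $\R^1$, take $\okval_{Y_\bullet}(P_s)=[0,1]$ and $v=1$. What would work is to check that $0\in\okval_{Y_\bullet}(D)$, since then $-v\in\okval_{Y_\bullet}(P_s)\subseteq\R_{\geq 0}^n$ forces $v=0$; but establishing $0\in\okval_{Y_\bullet}(D)$ is precisely the statement $x\notin\SB(D)$ --- the direct observation you discarded. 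So the detour is both unnecessary and, as written, incorrect.

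For the limiting case the paper gives no argument at all and simply cites \cite[Lemma~3.5]{CHPW2}. Your perturbation sketch is headed in a reasonable direction but contains a slip (it is $D+\eps A$, not $P_\sigma+\eps A$, whose divisorial Zariski negative part tends to $N_\sigma$; since $P_\sigma$ is movable, $N_\sigma(P_\sigma+\eps A)=0$ for $\eps>0$), and the ``convergent family of translations'' step is left vague. The same direct route as above works here too: ``$Y_n$ general'' in the limiting sense means $x\notin\bm(D)\supseteq\Supp N_\sigma$, hence $\nu_{Y_\bullet}(N_\sigma)=0$, and then the translation argument goes through without any dimension bookkeeping.
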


\begin{proof}
The first assertion follows from the fact that $R(X, D) \simeq R(X, P_s)$ and the construction of the valuative Okounkov body. The second assertion is nothing but \cite[Lemma 3.5]{CHPW2}.
\end{proof}

Finally, we give a proof of the main result of this section.
The following key result is implicitly used in \cite{CHPW1} (especially in the proof of \cite[Theorem B]{CHPW1}) and in this paper as well.
We include the complete proof here.

\begin{theorem}\label{newtheorem}
Let $X$ be a smooth projective variety of dimension $n$, and $D$ be a big divisor on $X$. Fix an admissible flag $Y_\bullet$ such that $Y_{n-k} \not\subseteq \bp(D)$. Then we have
$$
\okbd_{Y_{n-    k\bullet}}(D) = \okbd_{Y_\bullet}(D) \cap (\{ 0\}^{n-k} \times \R_{\geq 0}^{k}).
$$
\end{theorem}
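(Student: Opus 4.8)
The plan is to follow the strategy of \cite[Theorem 4.26]{lm-nobody} and \cite[Theorem 3.4]{Jow}, adapted to the hypothesis $Y_{n-k}\not\subseteq\bp(D)$. Write $Y'_\bullet := Y_{n-k\bullet}$ for the truncated flag $Y_{n-k}\supseteq Y_{n-k+1}\supseteq\cdots\supseteq Y_n$ on the smooth variety $Y_{n-k}$, whose dimension is $k$. The inclusion $\okbd_{Y'_\bullet}(D)\subseteq \okbd_{Y_\bullet}(D)\cap(\{0\}^{n-k}\times\R_{\geq0}^k)$ is the easy direction: by the valuation construction, if $\nu_{Y'_\bullet}(D'|_{Y_{n-k}})$ is defined for some $D'\sim_\R D$ with $\ord_{Y_1}(D')=\cdots=\ord_{Y_{n-k}}(\,\cdots\,)=0$ — which one may arrange on a Zariski-dense subset precisely because $Y_{n-k}\not\subseteq\SB(D)$ — then $\nu_{Y_\bullet}(D') = (0,\dots,0,\nu_{Y'_\bullet}(D'|_{Y_{n-k}}))$, so each point of $\nu_{Y'_\bullet}(|D|_\R)$ lifts to a point of $\nu_{Y_\bullet}(|D|_\R)$ in the relevant face; taking closed convex hulls gives the inclusion.

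For the reverse inclusion, the key is to compare $\okbd_{Y'_\bullet}(D)$ with the Okounkov body of the restricted graded linear series $W_\bullet = W_\bullet^{X|Y_{n-k}}$ of $D$ along $Y_{n-k}$, whose sections are the images of $H^0(X,mD)\to H^0(Y_{n-k},mD|_{Y_{n-k}})$. First I would invoke \cite[Theorem 4.26]{lm-nobody}: since $Y_{n-k}\not\subseteq\bp(D)$, it is an irreducible component of no component of $\bp(D)$, hence $\okbd_{Y_\bullet}(D)\cap(\{0\}^{n-k}\times\R^k_{\geq0})$ is naturally identified (via projection to the last $k$ coordinates) with $\okbd_{Y'_\bullet}(W_\bullet)$, the Okounkov body of the restricted series with respect to the flag $Y'_\bullet$ on $Y_{n-k}$. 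Next I would show $\okbd_{Y'_\bullet}(W_\bullet)=\okbd_{Y'_\bullet}(D)$, i.e. that passing to the restricted series does not shrink the Okounkov body; this uses that $Y_{n-k}\not\subseteq\bp(D)$ implies $\vol_{X|Y_{n-k}}(D)=\vol_{Y_{n-k}}(D|_{Y_{n-k}})$ (equality of top-dimensional volumes, by \cite{elmnp-restricted vol and base loci}), and an inductive/valuative argument — again pushing the non-containment in $\bp(D)$ down the flag, using that $Y_{n-k}\not\subseteq\bp(D)$ forces $D|_{Y_{n-k}}$ to be big with the residual flag subvarieties avoiding $\bp(D|_{Y_{n-k}})$ — shows the restricted series is ``large'' in the sense of \cite{lm-nobody}, so its Okounkov body has full dimension $k$ and agrees with that of the complete series $|mD|_{Y_{n-k}}$.

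The main obstacle I anticipate is the middle step: verifying that the restricted graded linear series $W_\bullet$ computes the same Okounkov body as the full linear series on $Y_{n-k}$, rather than a proper subset. Concretely, one must rule out that the restriction maps $H^0(X,mD)\to H^0(Y_{n-k},mD|_{Y_{n-k}})$ fail to be ``eventually surjective enough'' to recover the full body. The hypothesis $Y_{n-k}\not\subseteq\bp(D)$ is exactly what makes this work — it is equivalent to $D$ being ``$Y_{n-k}$-big'' in the sense that $D\in\mathrm{Big}^{Y_{n-k}}(X)$ — and the argument is the one in \cite{elmnp-restricted vol and base loci} that the restricted volume equals the volume on $Y_{n-k}$; from equality of volumes and the containment $\okbd_{Y'_\bullet}(W_\bullet)\subseteq\okbd_{Y'_\bullet}(D)$ of $k$-dimensional convex bodies, equality of the bodies follows since they have equal ($k$-dimensional) Lebesgue measure $\tfrac{1}{k!}\vol(D|_{Y_{n-k}})$. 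Combining the three steps yields $\okbd_{Y'_\bullet}(D) = \okbd_{Y'_\bullet}(W_\bullet) \cong \okbd_{Y_\bullet}(D)\cap(\{0\}^{n-k}\times\R_{\geq0}^k)$, which is the claim.
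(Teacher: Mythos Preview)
There are two genuine problems. First, you have misread the left-hand side: in the paper's notation $\okbd_{Y_{n-k\bullet}}(D)$ denotes the \emph{restricted} Okounkov body --- the body of the graded linear series $W_m=\operatorname{im}\bigl(H^0(X,mD)\to H^0(Y_{n-k},mD|_{Y_{n-k}})\bigr)$ with respect to the truncated flag, i.e.\ exactly your $\okbd_{Y'_\bullet}(W_\bullet)$. (This is confirmed in the paper's own proof, where its Euclidean volume is computed via $\vol_{X|Y_{n-k}}$.) It is \emph{not} the Okounkov body of the complete linear series of $D|_{Y_{n-k}}$. Thus the assertion of the theorem is precisely your ``first step'', and you cannot obtain it by citing \cite[Theorem~4.26]{lm-nobody}: that result, together with \cite[Theorem~3.4]{Jow}, is what the present theorem is \emph{generalising} to an arbitrary flag subvariety under the sole hypothesis $Y_{n-k}\not\subseteq\bp(D)$. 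An inductive reduction to the codimension-one case would require, at the second step, a $\bp$-type hypothesis for the graded sub-linear-series on $Y_1$, and no such statement is available.

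Second, your ``middle step'' is both unnecessary (given the correct reading) and false. The hypothesis $Y_{n-k}\not\subseteq\bp(D)$ does \emph{not} imply $\vol_{X|Y_{n-k}}(D)=\vol_{Y_{n-k}}(D|_{Y_{n-k}})$. On $X=\operatorname{Bl}_p\P^2$ with exceptional curve $E$, set $H=\pi^*\mathcal O_{\P^2}(1)$, $D=2H+E$, and let $C=H-E$ be the strict transform of a line through $p$. Then $\bp(D)=E$, so $C\not\subseteq\bp(D)$; the Zariski decomposition has positive part $2H$, giving $\vol_{X|C}(D)=2H\cdot C=2$, whereas $\vol_C(D|_C)=D\cdot C=3$. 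Hence $\okbd_{Y'_\bullet}(W_\bullet)\subsetneq\okbd_{Y'_\bullet}(D|_{Y_{n-k}})$ in general, and the stronger identity you are aiming for is simply not true.

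The paper's argument is direct and rather delicate. After reducing to $\Q$-divisors and recording the easy inclusion, it supposes some $(0^{n-k},x_1,\dots,x_k)$ lies in the slice but not in $\okbd_{Y_{n-k\bullet}}(D)$, perturbs by a small ample $\epsilon A$ so that (by continuity of $\vol_{X|Y_{n-k}}$) the point still lies outside $\okbd_{Y_{n-k\bullet}}(D+\epsilon A)$, and then manufactures an effective $D''\sim_{\Q}D+\epsilon A$ with $\nu_{Y_\bullet}(D'')=(0^{n-k},x_1+x_1',\dots,x_k+x_k')$ arbitrarily close to the chosen point, a contradiction. The construction of $D''$ is the substance: one passes to a log resolution of the base ideal of $|m(D+\tfrac{1}{2}\epsilon A)|$, approximates the positive part of the divisorial Zariski decomposition by an ample divisor $H$ modulo an effective error not containing $\widetilde Y_{n-k}$, and then uses surjectivity of restriction for the ample $H+\tfrac{1}{2}\epsilon f^*A$ to lift a carefully arranged section from $\widetilde Y_{n-k-1}$ back to $\widetilde X$.
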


\begin{proof}
We may assume that each $Y_i$ is a smooth variety.
Let $\{ A_i \}$ be a sequence of ample divisors on $X$ such that each $D+A_i$ is a $\Q$-divisor and $\lim\limits_{i \to \infty} A_i = 0$. Then we have
$$
\okbd_{Y_\bullet}(D) = \bigcap_{i=1}^{\infty} \okbd_{Y_\bullet}(D+A_i) \text{ and } \okbd_{Y_{n-k\bullet}}(D) = \bigcap_{i=1}^{\infty} \okbd_{Y_{n-k\bullet}}(D+A_i).
$$
Furthermore, $Y_{n-k} \not\subseteq \bp(D+A_i)$ for all $i$. Note that it is enough to prove the statement for the $\Q$-divisors $D+A_i$ for all sufficiently large $i$.
Thus we assume below that $D$ is a $\Q$-divisor.

It is easy to check that $\okbd_{Y_{n-k\bullet}}(D) \subseteq \okbd_{Y_\bullet}(D)$.
This implies that $\okbd_{Y_{n-k\bullet}}(D) \subseteq \okbd_{Y_\bullet}(D) \cap (\{ 0\}^{n-k} \times \R_{\geq 0}^{k})$  by definition.
Suppose that the inclusion is strict:
$$
\okbd_{Y_{n-k\bullet}}(D) \subsetneq \okbd_{Y_\bullet}(D) \cap (\{ 0\}^{n-k} \times \R_{\geq 0}^{k}).
$$
Then there exists a point $(0^{n-k}, x_1, \ldots, x_k) \in \okbd_{Y_\bullet}(D) \cap (\{ 0\}^{n-k} \times \R_{\geq 0}^{k})$, but $(0^{n-k}, x_1, \ldots, x_k) \not\in \okbd_{Y_{n-k\bullet}}(D)$.

Let $A$ be an ample $\Q$-divisor on $X$.
Note that $\okbd_{Y_{n-k\bullet}}(D) \subseteq \okbd_{Y_{n-k\bullet}}(D+\epsilon A)$ for any $\epsilon \geq 0$.
Since $Y_{n-k} \not\subseteq \bp(D+\epsilon A)$, we have $\vol_{\R^k} \okbd_{Y_{n-k\bullet}}(D+\epsilon A)= \frac{1}{(n-k)!}\vol_{X|Y_{n-k}}(D+\epsilon A)$. Recall that by \cite[Theorem A]{elmnp-restricted vol and base loci}, the function $\vol_{X|Y_{n-k}} \colon  \text{Big}^{Y_{n-k}}(X) \to \R$ is continuous,
where $\text{Big}^{Y_{n-k}}(X)$ denotes the cone in $\N^1(X)_\R$ consisting of the real divisor classes $\eta$ such that $Y_{n-k}$ is not properly contained in any of the irreducible components of $\bp(\eta)$.
Thus we can find a rational number $\epsilon >0$ such that $(x_1, \ldots, x_k)\not\in\okbd_{Y_{n-k\bullet}}(D+\epsilon A)$ and
$$
\vol_{\R^k} \okbd_{Y_{n-k\bullet}}(D+\epsilon A) < \vol_{\R^k} \Delta
$$
where $\Delta\subseteq \R^k$ is the convex hull of the set $\okbd_{Y_{n-k\bullet}}(D)$ and the point $(x_1, \ldots, x_k)$.
Note that we can fix a small neighborhood $U$ of $(x_1, \ldots, x_k)$ in $\R^k$ which is disjoint from $\okbd_{Y_{n-k\bullet}}(D+\epsilon A)$.
%Note also  that if $x_i'' \geq x_i'$ for some point  $(x_1',\ldots,x_k') \in \R^k$ in a small neighborhood $U$ of $(x_1, \ldots, x_k)$, then $(x_1'',\ldots,x_k'') \not\in \okbd_{Y_{n-k\bullet}}(D+\epsilon A)$.

There exists a sufficiently small $\delta >0$ such that the divisors
$$
\begin{array}{rcl}
A_1=A_1(\delta_1)& \sim_{\Q} &\frac{1}{2}\epsilon A+\delta_1 Y_1,\\
A_2=A_2(\delta_1,\delta_2)& \sim_{\Q} &A_1|_{Y_1}+\delta_2 Y_2,\\
  &\vdots&\\
A_{n-k}=A_{n-k}(\delta_1,\delta_2,\ldots,\delta_{n-k})& \sim_{\Q} &A_{n-k-1}|_{Y_{n-k-1}}+\delta_{n-k} Y_{n-k}
\end{array}$$
are successively ample for any $\delta_j$ satisfying $\delta \geq \delta_1, \delta_2, \ldots, \delta_{n-k} >0$.
% (Note that $A_l$ depend on $\{\epsilon,\delta_1,\delta_2,\cdots,\delta_l\}$.)
Since $(0^{n-k}, x_1, \ldots, x_k) \in \okbd_{Y_\bullet}(D)$, there exists a sequence of valuative points
$$\mathbf x_i=(\delta_1^i, \ldots, \delta_{n-k}^i, x_1^i, \ldots, x_k^i)\in\okbd_{Y_\bullet}(D)$$ such that
$$
\lim_{i \to \infty} \delta_j^i = 0 \text{ for $1 \leq j \leq n-k$\;\; and\; }\lim_{i \to \infty} x_l^i = x_l \text{ for $1 \leq l \leq k$}.
$$
Since it is known that the set of rational valuative points $\{\nu_{Y_\bullet}(D')| D\sim_\Q D'\geq 0\}$ is dense in $\okbd_{Y_\bullet}(D)$, we may assume that
$\mathbf x_i\in \{\nu_{Y_\bullet}(D')| D\sim_\Q D'\geq 0\}$ so that $\mathbf x_i\in\Q^n$ for all $i$.
We now fix a sufficiently large $i$ such that $0\leq \delta_j^i < \delta$ for all $1 \leq j \leq n-k$ and $(x_1^i, \ldots, x_k^i)$ lies in the small neighborhood $U$ in $\R^k$ of $(x_1, \ldots, x_k)$.
%\comment{I dont think we need to repeat this : so that $(x_1', \ldots, x_k') \not\in\okbd_{Y_{n-k\bullet}}(D+\epsilon A)$ if $x_j' \geq x_j^i$ for $1 \leq j \leq k$.}
Since $\mathbf x_i$ is a rational valuative point of $\okbd_{Y_\bullet}(D)$, there exist an effective divisor $D'\sim_\Q D$ such that $\nu_{Y_\bullet}(D')=\mathbf x_i$. Namely, we have
$$
\begin{array}{rcl}
D'&=&D_1 + \delta_1^i Y_1,\\
D_1|_{Y_1}&=&D_2 + \delta_2^i Y_2,\\
&\vdots& \\
D_{n-k-1}|_{Y_{n-k-1}}&=&D_{n-k}+\delta_{n-k}^i Y_{n-k}
\end{array}
$$
where $D_j$ on $Y_{j-1}$ ($j=1,\ldots, n-k$) are effective divisors.
%depending on $\{\delta_i^1,\delta^i_2,\ldots,\delta_{n-k}^i\}$.

Now note that we have
$$
D'+\frac{1}{2}\epsilon A = D_1 + \left( \frac{1}{2}\epsilon A + \delta_1^i Y_1 \right) \sim_{\Q} D_1 + A'_1
$$
where we may assume that $A'_1$ is an effective ample divisor such that $\mult_{Y_1} A'_1=0$. We also have
$$
(D_1+A'_1)|_{Y_1} = D_2 + (A'_1|_{Y_1} + \delta_2^i Y_2) \sim_{\Q} D_2 + A'_2
$$
where we may assume that $A'_2$ is an effective ample divisor such that $\mult_{Y_2} A'_2=0$.
By continuing this process, we finally obtain
$$
(D_{n-k-1} + A'_{n-k-1})|_{Y_{n-k-1}} = D_{n-k} + (A'_{n-k-1}|_{Y_{n-k-1}} + \delta_{n-k}^i Y_{n-k}) \sim_{\Q} D_{n-k} + A'_{n-k}
$$
where we may assume that $A'_{n-k}$ is an effective ample divisor such that $\mult_{Y_{n-k}} A'_{n-k}=0$.

%Note that $(D+\frac{1}{2}\epsilon A)|_{Y_{n-k-1}} \sim_{\Q} D_{n-k}+A_{n-k}$.
%\comment{$\Leftarrow$ don't understand why we have the same $D_{n-k},A_{n-k}$ here. They are actually functions depending on $\delta_j^i$. }

We now claim that there exists an effective divisor $D'' \sim_{\Q} D+\epsilon A$ such that $D''|_{Y_{n-k-1}} = D_{n-k}+E$ for some effective divisor $E$ with $\mult_{Y_{n-k}}E=0$ and $\nu_{Y_{n-k\bullet}}(E|_{Y_{n-k}}) = (x_1', \ldots, x_k')$ where we may assume that $x_j'\geq 0$ are arbitrarily small.
Note that  such $D''$ defines a rational valuative point $\nu_{Y_{\bullet}}(D'') = (0^{n-k}, x_1^i+x_1', \ldots, x_k^i + x_k') \in \okbd_{Y_\bullet}(D + \epsilon A)$. Thus $(x_1^i+x_1', \ldots, x_k^i + x_k') \in \okbd_{Y_{n-k \bullet}}(D+\epsilon A)$. If our claim holds, then we can conclude that $(x_1^i+x_1', \ldots, x_k^i + x_k')$ belongs to the small neighborhood $U$ of $(x_1, \ldots, x_k)$ in $\R^k$, which is a contradiction since $U$ is disjoint from $\okbd_{Y_{n-k\bullet}}(D+\epsilon A)$.
Therefore we finally obtain $\okbd_{Y_{n-k\bullet}}(D) = \okbd_{Y_\bullet}(D) \cap (\{ 0\}^{n-k} \times \R_{\geq 0}^{k})$.

It now remains to show the claim.
For a sufficiently divisible and large integer $m>0$, we take a log resolution $f_m \colon \widetilde{X}_m \to X$ of the base ideal of $|m(D+\frac{1}{2}\epsilon A)|$ so that we obtain a decomposition $f_m^*(m(D+\frac{1}{2}\epsilon A)) = M_m' + F_m'$ into a base point free divisor $M_m'$ and the fixed part $F_m'$ of $|f_m^*(m(D+\frac{1}{2}\epsilon A))|$. Let $M_m:=\frac{1}{m}M_m'$.
We may assume that $f_m$ is isomorphic outside $\bp(D+\frac{1}{2}\epsilon A)$.
We can take smooth strict transforms $\widetilde{Y}_i^m$ on $\widetilde{X}_m$ of $Y_i$ for $1 \leq i \leq n-k$.
For a general point $y$ in $\widetilde{Y}_{n-k}^m$, we have the positive moving Seshadri constant $\epsilon(||D+\frac{1}{2}\epsilon A||; f_m(y)) > 0$. Thus we also have the positive Seshadri constant $\epsilon(M_m; y) >0$ for $m \gg 0$ so that
 $\widetilde{Y}_{n-k}^m \not\subseteq \bp(M_m)$.
Let $g_m \colon \widetilde{X}_m \to Z_m$ be the birational morphism defined by $|M_m'|$.
Possibly by taking a further blow-up of $\widetilde{X}_m$, we may assume that every irreducible component of the exceptional locus of $g_m$ is a divisor.
We can still assume that $f_m$ is isomorphic over a general point in $Y_{n-k}$.
The divisor $H_m:=M_m - E_m$ is ample for any sufficiently small effective divisor $E_m$ whose support is the $g_m$-exceptional locus.
Note that $\mult_{\widetilde{Y}_{n-k}^m}(E_m)=0$.
Let $f_m^*(D+\frac{1}{2}\epsilon A)=P_m+N_m$ be the divisorial Zariski decomposition.
As in \cite[Proof of Proposition 3.7]{lehmann-nu}, by applying \cite[Proposition 2.5]{elmnp-asymptotic inv of base}, we see that $P_m - M_m$ is arbitrarily small if we take a sufficiently large $m>0$.
Since we may take an arbitrarily small $E_m$, so is $P_m - H_m$ for a sufficiently large $m>0$.

For simplicity, we fix a sufficiently large integer $m>0$ and we denote $f=f_m$, $\widetilde{X}=\widetilde{X}_m$ and $\widetilde{Y}_i=\widetilde{Y}_i^m$.
Let $f^*(D+\frac{1}{2}\epsilon A) = P+N$ be the divisorial Zariski decomposition. Then as we have seen above, we  can assume that $P$ can be arbitrarily approximated by an ample divisor $H$ on $\widetilde{X}$ such that $F=f^*(D+\frac{1}{2}\epsilon A)-H$ is an effective divisor satisfying  $\mult_{\widetilde{Y}_{n-k}}(F)=0$.
Note that $F-N$ is an arbitrarily small effective divisor such that $\mult_{\widetilde{Y}_{n-k}}(F-N)=0$.
Thus we can find an effective divisor $A_0 \sim_{\Q} A$ such that $\mult_{Y_{n-k-1}}A_0=0$, $E_0:=\frac{1}{2}\epsilon f^*A_0|_{\widetilde{Y}_{n-k-1}} - (F-N)|_{\widetilde{Y}_{n-k-1}}$ is effective, and $\mult_{\widetilde{Y}_{n-k}}E_0=0$.
Let $f^*D=P'+N'$ be the divisorial Zariski decomposition. Since $P'+f^*(\frac{1}{2}\epsilon A)$ is movable, we get $P \geq P'+f^*(\frac{1}{2}\epsilon A)$ and so $N' \geq N$.
Since $Y_{n-k} \not\subseteq \bp(D)$, every irreducible component of $N'$ cannot contain $\widetilde{Y}_{n-k-1}$.
Clearly, $f^*D_{n-k} - N'|_{\widetilde{Y}_{n-k-1}}$ is effective, and so is $f^*D_{n-k} - N|_{\widetilde{Y}_{n-k-1}}$.
Thus
$$
E_1:=f^*(D_{n-k} + A_{n-k}') -  N|_{\widetilde{Y}_{n-k-1}} + E_0 = f^*(D_{n-k}+A_{n-k}') - F|_{\widetilde{Y}_{n-k-1}} + \frac{1}{2}\epsilon f^*A_0|_{\widetilde{Y}_{n-k-1}}
$$
is an effective divisor on $\widetilde{Y}_{n-k-1}$.
Note that $E_1 \sim_{\Q} (H+\frac{1}{2}\epsilon f^*A)|_{\widetilde{Y}_{n-k-1}}$.
Since
$$
H^0\left(\widetilde{X}, m\left(H+\frac{1}{2}\epsilon f^*A\right)\right) \to H^0\left(\widetilde{Y}_{n-k-1}, m\left(H+\frac{1}{2}\epsilon f^*A\right)\Bigm|_{\widetilde{Y}_{n-k-1}}\right)
$$
 is surjective for all sufficiently divisible integers $m>0$, it follows that there exists $H' \sim_{\Q} H+\frac{1}{2}\epsilon f^*A$ such that $H'|_{\widetilde{Y}_{n-k-1}} = E_1$.
Then we have
$$
(H' + F)|_{\widetilde{Y}_{n-k-1}} = E_1 + F|_{\widetilde{Y}_{n-k-1}} = f^*D_{n-k} +E'
$$
where
$$
E':=f^*A_{n-k}' + (F-N)|_{\widetilde{Y}_{n-k-1}} + E_0 = f^*A_{n-k}' + \frac{1}{2}\epsilon f^*A_0|_{\widetilde{Y}_{n-k-1}}
$$
is an effective divisor.
Note that $\mult_{\widetilde{Y}_{n-k}}E'=0$.
We may also assume that each $x_j'\geq 0$ is arbitrarily small in $\nu_{\widetilde{Y}_{n-k\bullet}}(E'|_{\widetilde{Y}_{n-k}}) = (x_1', \ldots, x_k')$.
By letting $D'':=f_*(H'+F) \sim_{\Q} D + \epsilon A$ and $E:=f_*E'$, we obtain the divisors satisfying the required properties.
%$D''|_{Y_{n-k-1}} = D_{n-k} + E$ since $E$ is an effective divisor with $\mult_{Y_{n-k}}E=0$.
This shows the claim, and hence, we complete the proof.
\end{proof}

\section{Nakayama subvarieties and positive volume subvarieties}\label{nakpvssec}

In \cite{CHPW1}, we introduced Nakayama subvarieties and positive volume subvarieties of divisors. We now further study those subvarieties, and prove Theorem \ref{critintro}(=Theorem \ref{geomcrit}) in this section. We first recall the definitions of those subvarieties.

\begin{definition}[{\cite[Definitions 2.7 and 2.13]{CHPW1}}]
Let $D$ be an $\R$-divisor on $X$.
\begin{enumerate}[leftmargin=0cm,itemindent=.6cm]
\item[(1)] When $D$ is effective, a \emph{Nakayama subvariety of $D$} is an irreducible subvariety $U \subseteq X$ such that $\dim U=\kappa(D)$ and for every integer $m \geq 0$ the natural map
$$
H^0(X, \lfloor mD \rfloor) \to H^0(U, \lfloor mD|_U \rfloor)
$$
is injective (or equivalently, $H^0(X, \mc I_U \otimes \mc O_X(\lfloor mD \rfloor))=0$ where $\mc I_U$ is an ideal sheaf of $U$ in $X$).
\item[(2)] When $D$ is pseudoeffective, a \emph{positive volume subvariety of $D$} is an irreducible subvariety $V \subseteq X$ such that $\dim V = \kappanu(D)$ and $\vol_{X|V}^+(D)>0$.
\end{enumerate}
\end{definition}

%\begin{proposition}[{cf. \cite[Lemma V.2.11]{nakayama}, \cite[Proposition 2.9]{CHPW1}}]\label{gennak}
%Let $D$ be an effective $\R$-divisor on $X$. Then any irreducible subvariety of $X$ of dimension $\kappa(D)$ obtained by intersections of any ample divisors is a Nakayama subvariety of $D$.
%\end{proposition}

%\begin{proof}
%Let $U$ be an irreducible subvariety of $X$ of dimension $\kappa(D)$ obtained by intersections of ample divisors. Consider a dominant rational map $\phi_{kD} \colon X \dashrightarrow Z_k$ given by $|\lfloor kD \rfloor |$ for a sufficiently large $k>0$.  One can easily check that $U$ dominates $Z_k$ (?????) via $\phi_{kD}$ so that $H^0(X, \mc I_U \otimes \mc O_X(\lfloor mD \rfloor))=0$ for every integer $m \geq 0$.
%\end{proof}

\begin{remark}
In \cite{CHPW1}, we required an additional condition $V \not \subseteq \bm(D)$ for the definition of positive volume subvariety. However, we can drop this condition by Lemma \ref{notinbm}. Note that $V \not \subseteq \bm(D)$ does not imply $\vol_{X|V}^+(D)>0$ (see \cite[Example 2.14]{CHPW1}).
\end{remark}

\begin{lemma}\label{notinbm}
Let $D$ be a pseudoeffective $\R$-divisor on $X$. If $V$ is a positive volume subvariety of $D$, then $V \not\subseteq \bm(D)$.
\end{lemma}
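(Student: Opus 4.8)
The plan is to perturb $D$ by a small ample divisor so as to land in the big range, where restricted volumes control augmented base loci, and then propagate the resulting information back down to $\bm(D)$. Fix an ample divisor $A$ on $X$. Saying that $V$ is a positive volume subvariety means in particular that $\vol_{X|V}^+(D)=\lim_{\eps\to0+}\vol_{X|V}(D+\eps A)>0$; in particular the limit is meaningful, so $D+\eps A\in\Bigdiv^V(X)$ for all small $\eps>0$. Since the function $\eps\mapsto\vol_{X|V}(D+\eps A)$ is non-decreasing (restricted volume is monotone for effective differences, and $(\eps-\eps')A$ is ample for $\eps>\eps'$), the limit equals $\inf_{\eps>0}\vol_{X|V}(D+\eps A)$, so $\vol_{X|V}(D+\eps A)>0$ for every sufficiently small $\eps>0$. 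By \cite{elmnp-restricted vol and base loci} this forces
$$
V\not\subseteq\bp(D+\eps A)\quad\text{for all }0<\eps\ll1.
$$
When $D$ itself is big this already finishes the proof, since then $\vol_{X|V}^+(D)=\vol_{X|V}(D)>0$ gives $V\not\subseteq\bp(D)$ and $\bm(D)\subseteq\bp(D)$.

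Next I would convert the displayed non-containment into a statement about stable base loci. Recall two standard facts: adding an ample divisor can only shrink the stable base locus, i.e. $\SB(M+H)\subseteq\SB(M)$ for $H$ ample; and for a big divisor $\xi$ the augmented base locus stabilizes, $\bp(\xi)=\SB(\xi-\delta A)$ for all sufficiently small $\delta>0$ \cite{elmnp-asymptotic inv of base}. From the first fact and the definition $\bm(D)=\bigcup_{A'\text{ ample}}\SB(D+A')$ one gets $\bm(D)=\bigcup_{t>0}\SB(D+tA)$ (given ample $A'$, choose $t>0$ with $A'-tA$ ample, so $\SB(D+A')=\SB((D+tA)+(A'-tA))\subseteq\SB(D+tA)$). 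Hence it suffices to show $V\not\subseteq\SB(D+tA)$ for every $t>0$. Fix $t>0$, choose $\eps$ with $0<\eps<t$ small enough that the displayed non-containment holds and $D+\eps A$ is big, and then $\delta\in(0,\eps)$ small enough that $\bp(D+\eps A)=\SB(D+(\eps-\delta)A)$. Since $t-(\eps-\delta)>0$, the first fact gives $\SB(D+tA)\subseteq\SB(D+(\eps-\delta)A)=\bp(D+\eps A)$, and $V$ is not contained in the right-hand side. Therefore $V\not\subseteq\SB(D+tA)$ for all $t>0$, and so $V\not\subseteq\bm(D)$.

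The only place requiring care is the bookkeeping in the last paragraph: one must order the auxiliary parameters correctly ($\delta<\eps<t$, with $\eps$ also small enough for the volume positivity and the stabilization of $\bp$ to apply) and keep straight the two directions in which adding versus subtracting an ample divisor changes $\SB$. Beyond these elementary manipulations of $\SB$ and $\bp$, the argument uses nothing deeper than the comparison of restricted volumes with augmented base loci and the stabilization of the augmented base locus.
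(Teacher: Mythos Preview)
Your argument is correct, but it is considerably more elaborate than necessary, and the paper's proof is the natural one-line contrapositive you are implicitly circling around. The paper simply observes: if $V\subseteq\bm(D)$, then there is a sequence of ample divisors $A_i\to 0$ with $V\subseteq\SB(D+A_i)$; every section of $m(D+A_i)$ then vanishes on $V$, so $\vol_{X|V}(D+A_i)=0$ and hence $\vol_{X|V}^+(D)=0$. No appeal to \cite{elmnp-restricted vol and base loci} or to $\bp$ is needed.

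Two specific comments on your write-up. First, the clause ``the limit is meaningful, so $D+\eps A\in\Bigdiv^V(X)$'' is not justified and is mildly circular: whether $\vol_{X|V}^+(D)$ is defined on the nose is precisely tied to whether $V\not\subseteq\bm(D)$, which is what you are trying to prove. You never actually use this membership, so it is best dropped. Second, your detour through $\bp$ is unnecessary even within your own framework: from $\vol_{X|V}(D+\eps A)>0$ you get immediately the elementary fact $V\not\subseteq\SB(D+\eps A)$ (sections with nonzero restriction exist), and by the monotonicity you already noted this holds for \emph{every} $\eps>0$; combined with your identity $\bm(D)=\bigcup_{t>0}\SB(D+tA)$ this finishes at once. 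Invoking the theorem that $\vol_{X|V}(\xi)>0\Longleftrightarrow V\not\subseteq\bp(\xi)$ for big $\xi$, and then the stabilization $\bp(\xi)=\SB(\xi-\delta A)$, only to push back up to $\SB(D+tA)$, is a genuine detour that buys nothing here.
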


\begin{proof}
If $V \subseteq \bm(D)$, then there is a sequence $\{ A_i \}$ of ample divisors on $X$ such that $\lim_{i \to \infty} A_i = 0$ and $V \subseteq \SB(D+A_i)$. Then $\vol_{X|V}(D+A_i)=0$, so $\vol_{X|V}^+(D)=0$. Thus $V$ is not a positive volume subvariety of $D$.
\end{proof}

\begin{remark}
Even if $V$ is a positive volume subvariety of $D$, it is possible that $V \subseteq \SB(D)$. For instance, consider a ruled surface $S$ carrying a nef divisor $D$ such that $D\cdot C>0$ for every irreducible curve $C \subseteq S$, but $D$ is not ample (see e.g., \cite[Example 1.5.2]{pos}). Since $\kappa(D)=-\infty$, we have $\SB(D)=S$. Thus every positive volume subvariety of $D$ is contained in $\SB(D)$. 
%However, if $D$ is abundant, then this phenomenon does not occur (see Lemma \ref{notinsb}).
\end{remark}

%\begin{proposition}\label{genpvs}
%Let $D$ be a pseudoeffecitve $\R$-divisor on $X$. Then any irreducible subvariety of $X$ of dimension $\kappanu(D)$ obtained by intersections of any ample divisors is a positive volume subvariety of $D$.
%\end{proposition}

%\begin{proof}
%Let $V:=A_1 \cap \cdots \cap A_{n-\kappanu(D)}$ be an irreducible subvariety of $X$, where $A_i$ are ample divisors. For a sufficiently large $m>0$, we take a general member $H_i \in |mA_i|$ for each $i$ in such a way that $V':=H_1 \cap \cdots \cap H_{n-\kappanu(D)}$ is also an irreducible subvariety of $X$. In \cite[Proof of Proposition 2.17]{CHPW1}, we proved that $\vol_{X|V'}^+(D)>0$. We can check that $\vol_{X|V}^+(D)=\frac{1}{m^{n-\kappanu(D)}}\vol_{X|V'}^+(D)$ (?????). Thus $V$ is a positive volume subvariety of $D$.
%\end{proof}

\begin{remark}\label{gensub}
When $\kappa(D)=0$ (resp. $\kappanu(D)=0$), every point not in $\Supp(D)$ (resp. $\bm(D)$) is a Nakayama (resp. positive volume) subvariety of $D$.
When $\kappa(D)>0$, any  $\kappa(D)$-dimensional general subvariety (e.g., intersection of general ample divisors) is a Nakayama subvariety of $D$ (\cite[Proposition 2.9]{CHPW1}).
Similarly, when $\kappanu(D)>0$, any $\kappanu(D)$-dimensional intersection of sufficiently ample divisors is a positive volume subvariety of $D$ (\cite[Proposition 2.17]{CHPW1}).
In particular, we can always construct an admissible flag $Y_\bullet$ on $X$ containing a Nakayama subvariety of $D$ or a positive volume subvariety of $D$ such that $Y_n$ is a general point in $X$.
\end{remark}

The importance of such special subvarieties associated to divisors is that one can read off interesting asymptotic properties of divisors from Okounkov bodies with respect to admissible flags containing those subvarieties. The following theorem is the main result of \cite{CHPW1}, which can be regarded as a generalization of \cite[Theorem A]{lm-nobody}.

\begin{theorem}[{\cite[Theorems A and B]{CHPW1}}]\label{chpwmain}
We have the following:
\begin{enumerate}[leftmargin=0cm,itemindent=.6cm]
\item[$(1)$] Let $D$ be an effective $\R$-divisor on $X$. Fix an admissible flag $Y_\bullet$ containing a Nakayama subvariety $U$ of $D$ such that $Y_n$ is a general point in $X$. Then $\okval_{Y_\bullet}(D) \subseteq \{0 \}^{n-\kappa(D)} \times \R^{\kappa(D)}$ so that one can regard $\okval_{Y_\bullet}(D) \subseteq \R^{\kappa(D)}$. Furthermore, we have
$$\dim \okval_{Y_\bullet}(D)=\kappa(D) \text{ and } \vol_{\R^{\kappa(D)}}(\okval_{Y_\bullet}(D))=\frac{1}{\kappa(D)!} \vol_{X|U}(D).$$
\item[$(2)$] Let $D$ be a pseudoeffective $\R$-divisor on $X$, and fix an admissible flag $Y_\bullet$ containing a positive volume subvariety $V$ of $D$. Then $\oklim_{Y_\bullet}(D) \subseteq \{0 \}^{n-\kappanu(D)} \times \R^{\kappanu(D)}$ so that one can regard $\oklim_{Y_\bullet}(D) \subseteq \R^{\kappanu(D)}$. Furthermore, we have
$$\dim \oklim_{Y_\bullet}(D)=\kappanu(D) \text{ and } \vol_{\R^{\kappanu(D)}}(\oklim_{Y_\bullet}(D))=\frac{1}{\kappanu(D)!} \vol_{X|V}^+(D).$$
\end{enumerate}
\end{theorem}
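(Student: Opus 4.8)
The plan is to reduce both parts, via the construction of the Okounkov bodies and Theorem~\ref{newtheorem}, to the statement that the distinguished subvariety carries a restricted (graded) linear series whose Okounkov body --- computed with respect to the induced subflag --- already accounts for the whole body; the dimension and volume formulas then follow from the standard dictionary for restricted linear series together with the equalities $\dim\okval_{Y_\bullet}(D)=\kappa(D)$ and $\dim\oklim_{Y_\bullet}(D)\le\kappanu(D)$ recorded above.

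For part~(1), put $d:=\kappa(D)$ and $U:=Y_{n-d}$. Since the valuative points of $\okval_{Y_\bullet}(D)$ with rational coordinates --- those arising from nonzero sections $s\in H^0(X,\mathcal O_X(\lfloor mD\rfloor))$, $m\gg 0$ --- are dense, it suffices to show that for each such $s$ the associated point has its first $n-d$ coordinates equal to $0$. As $Y_\bullet$ is a flag, $U=Y_{n-d}\subseteq Y_i$ for $i\le n-d$; if $i$ is the least index $\le n-d$ with $\nu_i(s)>0$, then the restriction of $s$ to $Y_{i-1}$, which is nonzero precisely because $\nu_1(s)=\cdots=\nu_{i-1}(s)=0$, must vanish along $Y_i$, so $s$ vanishes along $Y_i\supseteq U$; since $U$ is a Nakayama subvariety this places $s$ in $H^0(X,\mathcal I_U\otimes\mathcal O_X(\lfloor mD\rfloor))=0$, a contradiction. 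Hence $\okval_{Y_\bullet}(D)\subseteq\{0\}^{n-d}\times\R^d$, and under this identification it is precisely the Okounkov body $\okbd_{Y_{n-d\bullet}}(W_\bullet)$ of the restricted graded linear series $W_\bullet$ with $W_m=\operatorname{im}\bigl(H^0(X,\lfloor mD\rfloor)\to H^0(U,\lfloor mD|_U\rfloor)\bigr)$, taken along $U=Y_{n-d}\supseteq\cdots\supseteq Y_n$. Injectivity of this restriction gives $\vol(W_\bullet)=\vol_{X|U}(D)$ by definition, and $\dim\okval_{Y_\bullet}(D)=\kappa(D)=d$ was recorded above, so $W_\bullet$ has maximal Kodaira--Iitaka dimension; the general theory of Okounkov bodies of graded linear series (\cite{lm-nobody}, \cite{B2}) then yields $\vol_{\R^d}\okval_{Y_\bullet}(D)=\tfrac1{d!}\vol(W_\bullet)=\tfrac1{d!}\vol_{X|U}(D)$.

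For part~(2), put $e:=\kappanu(D)$, $V:=Y_{n-e}$, and fix an ample divisor $A$. Because $V$ is a positive volume subvariety, Lemma~\ref{notinbm} gives $V\not\subseteq\bm(D)$; and since $\bp(D+\epsilon A)\subseteq\bm(D)$ for every $\epsilon>0$ (choose the auxiliary ample divisor in the definition of $\bp(D+\epsilon A)$ to be $\tfrac\epsilon2 A$), we obtain $V\not\subseteq\bp(D+\epsilon A)$ for all $\epsilon>0$. Each $D+\epsilon A$ is big, so Theorem~\ref{newtheorem} with $k=e$ gives
$$
\okbd_{Y_\bullet}(D+\epsilon A)\cap\bigl(\{0\}^{n-e}\times\R^{e}\bigr)=\okbd_{Y_{n-e\bullet}}(D+\epsilon A),
$$
the Okounkov body of the complete restricted linear series of $D+\epsilon A$ on $V$. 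Intersecting over $\epsilon\to 0+$, the left-hand side becomes $\oklim_{Y_\bullet}(D)\cap(\{0\}^{n-e}\times\R^{e})$, while the right-hand side is a convex body in $\R^{e}$ whose volume equals $\lim_{\epsilon\to0+}\tfrac1{e!}\vol_{X|V}(D+\epsilon A)=\tfrac1{e!}\vol_{X|V}^+(D)>0$, by continuity of restricted volumes (\cite[Theorem A]{elmnp-restricted vol and base loci}) and the definition of $\vol_{X|V}^+$; in particular this slice has dimension $e$. Since $\dim\oklim_{Y_\bullet}(D)\le\kappanu(D)=e$ and a convex body of dimension $\le e$ containing an $e$-dimensional subset of the $e$-dimensional coordinate subspace $\{0\}^{n-e}\times\R^{e}$ must lie in that subspace, we conclude $\oklim_{Y_\bullet}(D)\subseteq\{0\}^{n-e}\times\R^{e}$; it therefore coincides with the slice just computed, and the claimed dimension and volume formulas follow.

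The main obstacle is the volume bookkeeping for the restricted linear series. In part~(1) one must verify that $W_\bullet$ satisfies the conditions of \cite{lm-nobody} ensuring $\vol_{\R^d}\okbd_{Y_{n-d\bullet}}(W_\bullet)=\tfrac1{d!}\vol(W_\bullet)$ (equivalently, a Fujita-type approximation statement on $U$), and one must justify the identification $\okval_{Y_\bullet}(D)=\okbd_{Y_{n-d\bullet}}(W_\bullet)$ carefully enough to carry the formulas across. In part~(2) everything rests on Theorem~\ref{newtheorem}, proved above as the technical core of Section~\ref{okbdsubsec}; the remaining steps --- the inclusion $\bp(D+\epsilon A)\subseteq\bm(D)$, convergence of restricted volumes as $\epsilon\to0+$, and the elementary convex-geometry observation --- are routine.
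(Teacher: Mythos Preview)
The paper does not prove this statement: Theorem~\ref{chpwmain} is quoted from \cite[Theorems~A and~B]{CHPW1} and used as a black box. What the present paper contributes is Theorem~\ref{newtheorem}, which it explicitly describes as the technical core ``implicitly used in \cite{CHPW1} (especially in the proof of \cite[Theorem~B]{CHPW1})''. So there is no proof here to compare against; your write-up is effectively a reconstruction of the argument from \cite{CHPW1}, and on that level it is sound.

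Your part~(2) is precisely the intended application of Theorem~\ref{newtheorem}: slice $\okbd_{Y_\bullet}(D+\epsilon A)$ by $\{0\}^{n-e}\times\R^e$, identify the slice with the restricted Okounkov body via Theorem~\ref{newtheorem} (this is legitimate since $V\not\subseteq\bp(D+\epsilon A)$), pass to the limit, and use $\dim\oklim_{Y_\bullet}(D)\le\kappanu(D)$ together with the elementary convex-geometry observation to force the containment. This matches how the paper deploys Theorem~\ref{newtheorem} in the proof of Theorem~\ref{geomcrit}~(2). Your part~(1) argument for the containment $\okval_{Y_\bullet}(D)\subseteq\{0\}^{n-d}\times\R^d$ is correct, and the identification $\okval_{Y_\bullet}(D)=\okbd_{Y_{n-d\bullet}}(W_\bullet)$ is exactly what the paper cites as \cite[Remark~3.11]{CHPW1} in the proof of Theorem~\ref{ratsimval}.

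The one point you rightly flag as an obstacle --- that $W_\bullet$ satisfies the hypotheses needed for $\vol_{\R^d}\okbd_{Y_{n-d\bullet}}(W_\bullet)=\tfrac{1}{d!}\vol(W_\bullet)$ --- is genuine and is where the real work in \cite{CHPW1} lies; it is not addressed in the present paper. Everything else in your sketch is correct.
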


\begin{remark}
To extract asymptotic properties of divisors from $\okval_{Y_\bullet}(D)$ as in Theorem \ref{chpwmain} (1), we need to assume that $Y_n$ is a general point in $X$.
When considering $\okval_{Y_\bullet}(D)$ (resp. $\oklim_{Y_\bullet}(D)$, we say that $Y_n$ is \emph{general} if $Y_n$ is not contained in $\SB(D)$ (resp. $\bm(D)$) (see \cite[Lemma 2.6]{lm-nobody} and \cite[Subsection 3.2]{CHPW1}).
\end{remark}

As an application of Theorem \ref{chpwmain}, we now prove the following Theorem \ref{critintro}.

\begin{theorem}\label{geomcrit}
Let $D$ be an $\R$-divisor on $X$. Fix an admissible flag $Y_\bullet$ such that $Y_n$ is a general point in $X$.
We have the following:
\begin{enumerate}[leftmargin=0cm,itemindent=.6cm]
\item[$(1)$]  If $D$ is effective, then $Y_\bullet$ contains a Nakayama subvariety of $D$ if and only if $\okval_{Y_\bullet}(D) \subseteq \{0 \}^{n-\kappa(D)} \times \R^{\kappa(D)}$.
\item[$(2)$] If $D$ is pseudoeffective, then $Y_\bullet$ contains a positive volume subvariety of $D$ if and only if $\oklim_{Y_\bullet}(D) \subseteq \{0 \}^{n-\kappanu(D)} \times \R^{\kappanu(D)}$ and $\dim \oklim_{Y_\bullet}(D)=\kappanu(D)$.
\end{enumerate}
\end{theorem}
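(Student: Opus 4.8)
I would get the two ``only if'' directions for free from Theorem~\ref{chpwmain}: a Nakayama subvariety has dimension $\kappa(D)$ and a positive volume subvariety has dimension $\kappanu(D)$, so ``$Y_\bullet$ contains such a subvariety'' means precisely that $Y_{n-\kappa(D)}$, resp. $Y_{n-\kappanu(D)}$, is one, and then Theorem~\ref{chpwmain}(1), resp. (2), yields the asserted inclusion (and, in (2), the equality of dimensions). The actual work is in the two ``if'' directions, where the candidate special subvariety is in each case the unique flag member of the correct dimension, namely $Y_{n-\kappa(D)}$ in (1) and $Y_{n-\kappanu(D)}$ in (2); only the defining positivity condition needs checking.

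\noindent\textbf{Part (1), the ``if'' direction.} Put $k=\kappa(D)$ and assume $\okval_{Y_\bullet}(D)\subseteq\{0\}^{n-k}\times\R^k$. Since $\nu_{Y_\bullet}(D')\in\okval_{Y_\bullet}(D)$ for every $D'\in|D|_\R$, its first $n-k$ coordinates vanish. First I would record the elementary fact that this forces $Y_{n-k}\not\subseteq\Supp(D')$: in local coordinates $z_1,\dots,z_n$ at $x$ with $Y_i=\{z_1=\cdots=z_i=0\}$ near $x$ and $D'=\mathrm{div}(f)$ locally, the vanishing $\nu_1(D')=\cdots=\nu_{n-k}(D')=0$ says $f$ has a monomial $z^\alpha$ with $\alpha_1=\cdots=\alpha_{n-k}=0$, which is impossible when $Y_{n-k}=\{z_1=\cdots=z_{n-k}=0\}\subseteq\{f=0\}$. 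Then, for any $m\ge0$ and $0\neq s\in H^0(X,\mathcal O_X(\lfloor mD\rfloor))$, the divisor $D_s:=\mathrm{div}(s)$ is effective and $\tfrac1m\bigl(D_s+(mD-\lfloor mD\rfloor)\bigr)\in|D|_\R$ has support containing $\Supp(D_s)$, so $Y_{n-k}\not\subseteq\Supp(D_s)$, i.e. $s$ does not vanish on $Y_{n-k}$. Hence $H^0(X,\mathcal I_{Y_{n-k}}\otimes\mathcal O_X(\lfloor mD\rfloor))=0$ for all $m$, and since $\dim Y_{n-k}=k$, this makes $Y_{n-k}$ a Nakayama subvariety of $D$.

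\noindent\textbf{Part (2), the ``if'' direction.} Put $\nu=\kappanu(D)$, $V=Y_{n-\nu}$, and assume $\oklim_{Y_\bullet}(D)\subseteq\{0\}^{n-\nu}\times\R^\nu$ with $\dim\oklim_{Y_\bullet}(D)=\nu$. As $\dim V=\nu$, I only need $\vol_{X|V}^+(D)>0$. Fix an ample $A$. The key preliminary point is that ``$Y_n$ general'' gives $Y_n\not\subseteq\bm(D)$, hence $V=Y_{n-\nu}\not\subseteq\bm(D)$ because the flag is nested, hence $V\not\subseteq\SB(D+tA)$ for all $t>0$, and therefore $V\not\subseteq\bp(D+\epsilon A)$ for all $\epsilon>0$ since $\bp(D+\epsilon A)\subseteq\SB(D+tA)$ whenever $0<t<\epsilon$. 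Thus Theorem~\ref{newtheorem} applies to each big divisor $D+\epsilon A$ and gives $\okbd_{Y_{n-\nu\bullet}}(D+\epsilon A)=\okbd_{Y_\bullet}(D+\epsilon A)\cap(\{0\}^{n-\nu}\times\R^\nu)$. Intersecting over $\epsilon>0$ and using the hypothesis $\oklim_{Y_\bullet}(D)\subseteq\{0\}^{n-\nu}\times\R^\nu$, I obtain $\bigcap_{\epsilon>0}\okbd_{Y_{n-\nu\bullet}}(D+\epsilon A)=\oklim_{Y_\bullet}(D)$, a $\nu$-dimensional body inside the $\nu$-dimensional subspace, so $c:=\vol_{\R^\nu}\oklim_{Y_\bullet}(D)>0$. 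Consequently $\vol_{\R^\nu}\okbd_{Y_{n-\nu\bullet}}(D+\epsilon A)\ge c$ for every $\epsilon>0$, and since $\vol_{\R^\nu}\okbd_{Y_{n-\nu\bullet}}(D+\epsilon A)=\tfrac1{\nu!}\vol_{X|V}(D+\epsilon A)$ by \cite{lm-nobody}, letting $\epsilon\to0^+$ gives $\vol_{X|V}^+(D)\ge\nu!\,c>0$, so $V=Y_{n-\nu}$ is a positive volume subvariety of $D$.

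\noindent\textbf{Expected main obstacle.} Part (1) uses nothing beyond the definition of the valuative Okounkov body, so the real content sits in Part (2). There the delicate step is justifying the application of Theorem~\ref{newtheorem} to all the perturbations $D+\epsilon A$ simultaneously, i.e. verifying $Y_{n-\kappanu(D)}\not\subseteq\bp(D+\epsilon A)$ for all small $\epsilon>0$; extracting this cleanly from the genericity of $Y_n$ via the containments $\bp(D+\epsilon A)\subseteq\SB(D+tA)\subseteq\bm(D)$ is the crux, after which the argument is a formal passage to the limit together with the Lazarsfeld--Musta\c{t}\u{a} identity relating the restricted volume to the volume of the restricted Okounkov body.
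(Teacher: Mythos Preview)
Your proposal is correct and follows essentially the same route as the paper's proof: the forward implications come from Theorem~\ref{chpwmain}, Part~(1) backwards is the observation that vanishing of the first $n-\kappa(D)$ coordinates forces $Y_{n-\kappa(D)}\not\subseteq\Supp(D')$ and hence injectivity of restriction, and Part~(2) backwards combines $Y_{n-\kappanu(D)}\not\subseteq\bp(D+\epsilon A)$ with Theorem~\ref{newtheorem} and the Lazarsfeld--Musta\c{t}\u{a} restricted-volume formula to bound $\vol_{X|V}^+(D)$ below. The only cosmetic difference is that the paper works with a single arbitrary ample $A$ and the containment $\okbd_{Y_{n-\kappanu(D)\bullet}}(D+A)\supseteq\oklim_{Y_\bullet}(D)$ directly, whereas you first pass to the intersection over $\epsilon>0$; either yields the same inequality.
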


\begin{proof}
The $(\Rightarrow)$ direction of both $(1)$ and $(2)$ at once follows from Theorem \ref{chpwmain}. For the $(\Leftarrow)$ direction of $(1)$, note that $\ord_{Y_{n-\kappa(D)}}(D')=0$ for every effective divisor $D' \sim_{\R} D$ under the assumption that
 $\okval_{Y_\bullet}(D) \subseteq \{0 \}^{n-\kappa(D)} \times \R^{\kappa(D)}$.
This means that $H^0(X, \mathcal{I}_{Y_{n-\kappa(D)}} \otimes \mathcal{O}_X(\lfloor mD \rfloor)) = 0$ for every integer $m \geq 0$. Thus $Y_{n-\kappa(D)}$ is a Nakayama subvariety of $D$.

For the $(\Leftarrow)$ direction of $(2)$, take an arbitrary ample divisor $A$ on $X$.
Since $\okbd_{Y_\bullet}(D+A) \supseteq \oklim_{Y_\bullet}(D)$, it follows that
$$
\okbd_{Y_\bullet}(D+A)\cap(\{0\}^{n-\kappanu(D)}\times\R_{\geq0}^{\kappanu(D)})\supseteq\oklim_{Y_{\bullet}}(D).
%\subseteq\{0\}^{n-\kappanu(D)}\times\R_{\geq0}^{\kappanu(D)}.
$$
Since $Y_n$ is general, we have $Y_{n-\kappanu(D)} \not\subseteq \bm(D)$. Thus $Y_{n-\kappanu(D)} \not\subseteq \bp(D+A)$ and using Theorem \ref{newtheorem}, we obtain $\okbd_{Y_{n-\kappanu(D)\bullet}}(D+A)\supseteq\oklim_{Y_{\bullet}}(D)$.
Therefore, by \cite[(2.7)]{lm-nobody} we have
$$
\begin{array}{rl}
\vol_{X|Y_{n-\kappanu(D)}}(D+A)&= \kappanu(D)!\cdot \vol_{\R^{\kappanu(D)}}\okbd_{Y_{n-\kappanu(D)\bullet}}(D+A)\\
&\geq  \kappanu(D)!\cdot \vol_{\R^{\kappanu(D)}}\oklim_{Y_{\bullet}}(D).
\end{array}
$$
The given condition implies that $\vol_{\R^{\kappanu(D)}}\oklim_{Y_{\bullet}}(D)>0$.
Hence, $\vol_{X|Y_{n-\kappanu(D)}}^+(D)>0$, and by definition $Y_{n-\kappanu(D)}$ is a positive volume subvariety of $D$.
%\comment{old}
%By Lemma \ref{newlemma}, we have $\okbd_{Y_{\bullet}}(D+A)\subseteq\{0\}^{n-\kappanu(D)}\times\R^{\kappanu(D)}$.  Furthermore, $\oklim_{Y_\bullet}(D) \subseteq \okbd_{Y_\bullet}(D+A)$. Therefore, by \cite[(2.7)]{lm-nobody}, $\frac{1}{\kappanu(D)!}\vol_{X|Y_{n-\kappanu(D)}}(D+A) \geq  \vol_{\R^{\kappanu(D)}}(\oklim_{Y_\bullet}(D))>0$. Thus we get $\vol_{X|Y_{n-\kappanu(D)}}^+(D)>0$, and hence, $Y_{n-\kappanu(D)}$ is a positive volume subvariety of $D$.
\end{proof}

Regarding Theorem \ref{geomcrit} (1), we recall that $\dim \okval_{Y_\bullet}(D)=\kappa(D)$ always holds whenever $D$ is effective by \cite[Proposition 3.3]{B2}.

%\begin{lemma}[{\cite[Lemma 2.8]{CHPW1}}]
%Let $D$ be an effective $\R$-divisor on $X$, and $U\subseteq X$ be its Nakayama subvariety. Then $D|_U$ is a big divisor on $U$.
%\end{lemma}

%\begin{lemma}[{\cite[Theorem 2.18]{CHPW1}}]
%Let $D$ be a pseudoeffective $\R$-divisor on $X$, and $V\subseteq X$ be its positive volume subvariety. Then $D|_V$ is a big divisor on $V$.
%\end{lemma}

%\begin{lemma}[{\cite[Proposition 2.16]{CHPW1}}]
%Let $D$ be a pseudoeffecitve $\R$-divisor on $X$, and $V \subseteq X$ be its positive volume subvariety. If $f \colon \widetilde{X} \to X$ is a $V$-birational morphism, then the strict transform $\widetilde{V}$ of $V$ by $f$ is also a positive volume subvariety of $f^*D$.
%\end{lemma}

\section{Rational polyhedrality of Okounkov bodies}\label{ratsec}

This section is devoted to showing the rational polyhedrality of Okounkov bodies of pseudoeffective divisors. We then finally prove Theorem \ref{main1} (=Corollary \ref{ratpolval} and Theorem \ref{ratsimlim}). First, we  study the Okounkov bodies under surjective morphisms.

\begin{lemma}[{cf. \cite[Lemma 3.3]{CHPW2}}]\label{morokbd}
Let $f \colon X \to \overline{X}$ be a surjective morphism of projective varieties of the same dimension $n$, and fix an admissible flag
$$
Y_\bullet: X=Y_0\supseteq Y_1\supseteq\cdots \supseteq Y_{n-1}\supseteq Y_n=\{x\}
$$
on $X$ such that
$$
\overline{Y}_\bullet : \overline{X}=f(Y_0)\supseteq f(Y_1)\supseteq \cdots  \supseteq f(Y_{n-1}) \supseteq f(Y_n)=\{f(x) \}
$$
is an admissible flag on $\overline{X}$.
For a big $\Z$-divisor $D$ on $\overline{X}$, consider a graded linear series $W_\bullet$ associated to $f^*D$ on $X$ with $W_k:=H^0(\overline{X}, kD) \subseteq H^0(X, kf^*D)$ for any integer $k \geq 0$.
Then $\okbd_{Y_\bullet}(W_\bullet)=\okbd_{\overline{Y}_\bullet}(D)$.
\end{lemma}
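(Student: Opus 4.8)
The plan is to identify the Okounkov body $\okbd_{Y_\bullet}(W_\bullet)$ directly from the definition of the Okounkov body of a graded linear series, using the fact that $f$ is a surjective morphism between varieties of the \emph{same} dimension, hence generically finite and (after restricting to a dense open) an isomorphism on a suitable locus. First I would recall that, by \cite{lm-nobody}, $\okbd_{Y_\bullet}(W_\bullet)$ is the closed convex hull of $\bigcup_{k\geq 1}\frac{1}{k}\nu_{Y_\bullet}(W_k\setminus\{0\})$, where $\nu_{Y_\bullet}$ is the valuation vector attached to the flag $Y_\bullet$ on $X$ (which makes sense on sections of $\mathcal{O}_X(kf^*D)$, in particular on the subspace $W_k=H^0(\overline{X},kD)$); likewise $\okbd_{\overline{Y}_\bullet}(D)$ is the closed convex hull of $\bigcup_{k\geq 1}\frac{1}{k}\nu_{\overline{Y}_\bullet}(H^0(\overline{X},kD)\setminus\{0\})$. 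So it suffices to show that for every $k$ and every nonzero $s\in H^0(\overline{X},kD)$ one has $\nu_{Y_\bullet}(f^*s)=\nu_{\overline{Y}_\bullet}(s)$, where $f^*s$ denotes the pullback section of $\mathcal{O}_X(kf^*D)$.

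The key point is the compatibility of the successive order-of-vanishing construction with $f$. Since $f$ is surjective and $\dim X=\dim \overline{X}=n$, $f$ is generically finite; moreover the flag on $\overline{X}$ is $\overline{Y}_i=f(Y_i)$ with each $Y_i\to \overline{Y}_i$ again surjective between varieties of the same dimension $n-i$. I would argue inductively on $i$: the divisor of $f^*s$ is $f^*(\mathrm{div}\,s)$, so $\ord_{Y_1}(f^*s)=\mathrm{mult}_{Y_1}f^*(\mathrm{div}\,s)$. Because $f$ maps $Y_1$ onto $\overline{Y}_1$ dominantly and generically finitely, and $f$ is a morphism (so no exceptional behaviour that would inflate multiplicities along $Y_1$ beyond the pullback coefficient), the multiplicity of $f^*(\mathrm{div}\,s)$ along $Y_1$ equals $\mathrm{mult}_{\overline{Y}_1}(\mathrm{div}\,s)=\ord_{\overline{Y}_1}(s)$; this is where I would invoke that $f$ restricted over a neighbourhood of the generic point of $\overline{Y}_1$ is \'etale (or at least flat with reduced fibres), which can be arranged since we only care about the valuation, computed at generic points, and $\mathrm{char}=0$. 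Having matched $\nu_1$, we pass to $Y_1\to\overline{Y}_1$: the section $(f^*s-\nu_1 Y_1)|_{Y_1}$ is exactly the pullback under $f|_{Y_1}\colon Y_1\to\overline{Y}_1$ of $(s-\nu_1\overline{Y}_1)|_{\overline{Y}_1}$, and $f|_{Y_1}$ is again a surjective morphism of $(n-1)$-dimensional varieties, so the inductive hypothesis applies and gives $\nu_{i}(f^*s)=\nu_i(s)$ for all $i$. This yields $\nu_{Y_\bullet}(f^*s)=\nu_{\overline{Y}_\bullet}(s)$.

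Finally, pullback $s\mapsto f^*s$ is the inclusion $W_k=H^0(\overline{X},kD)\hookrightarrow H^0(X,kf^*D)$ used in the statement, and it is injective (as $f$ is dominant), so it gives a bijection $W_k\setminus\{0\}\to (\text{image})\setminus\{0\}$ compatible with the valuations; hence the two unions $\bigcup_k\frac1k\nu_{Y_\bullet}(W_k\setminus\{0\})$ and $\bigcup_k\frac1k\nu_{\overline{Y}_\bullet}(H^0(\overline{X},kD)\setminus\{0\})$ coincide, and taking closed convex hulls gives $\okbd_{Y_\bullet}(W_\bullet)=\okbd_{\overline{Y}_\bullet}(D)$. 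The main obstacle I anticipate is the multiplicity-matching step: one must be careful that the morphism $f$ (which need not be finite, only generically finite) does not contract some subvariety into $\overline{Y}_1$, or ramify along $Y_1$, in a way that changes the order of vanishing; the cleanest fix is to note that the valuations $\ord_{Y_i}$ only depend on the local ring at the generic point of $Y_i$, that $f$ induces there an inclusion of DVRs of the same fraction-field transcendence degree, hence a finite extension of DVRs, and that since we may replace $D$ by a numerically equivalent divisor and work up to the choices implicit in the flag being admissible (each $Y_i$ smooth at $x$, $x$ general), the ramification index is $1$ — equivalently, one reduces to the birational-morphism case already handled in \cite[Lemma 3.3]{CHPW2} and Lemma \ref{okbdbir} by Stein factorization, pushing the finite part off to a base change that does not affect the relevant graded pieces $W_k=H^0(\overline{X},kD)$.
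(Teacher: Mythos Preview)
The paper's own proof is a single sentence (``It follows from the construction of Okounkov body associated to a graded linear series''), and your unpacking---reduce to $\nu_{Y_\bullet}(f^*s)=\nu_{\overline{Y}_\bullet}(s)$ for each nonzero $s\in H^0(\overline{X},kD)$ and match orders of vanishing inductively along the flag---is exactly what that sentence asks the reader to supply.

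The obstacle you anticipate, however, is genuine and is not resolved either by your fixes or by the paper. The equality $\ord_{Y_1}(f^*s)=\ord_{\overline{Y}_1}(s)$ requires the extension of DVRs $\mathcal{O}_{\overline{X},\overline{Y}_1}\hookrightarrow\mathcal{O}_{X,Y_1}$ to be unramified; in general one only has $\ord_{Y_1}(f^*s)=e\cdot\ord_{\overline{Y}_1}(s)$ with $e$ the ramification index. Concretely, take $X=\overline{X}=\mathbb{P}^1$, $f(z)=z^2$, $Y_1=\overline{Y}_1=\{0\}$, $D=[0]$: both flags are admissible, yet $\okbd_{\overline{Y}_\bullet}(D)=[0,1]$ while $W_k=\langle 1,z^2,\ldots,z^{2k}\rangle$ gives $\okbd_{Y_\bullet}(W_\bullet)=[0,2]$. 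Your suggested patches do not help: admissibility only demands smoothness at the single point $x$ and says nothing about ramification at the generic point of $Y_1$; numerical equivalence of $D$ is irrelevant to ramification of $f$; and Stein factorization still leaves a finite cover that may ramify along $Y_1$. What saves the only application of the lemma (Lemma~\ref{simplex}) is that there each $Y_i$ is cut out by \emph{general} members of a base-point-free system, so generic smoothness in characteristic zero forces $e=1$ at every stage; with that extra hypothesis your inductive argument goes through cleanly.
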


\begin{proof}
It follows from the construction of Okounkov body associated to a graded linear series.
\end{proof}

The following lemma plays a crucial role in proving Theorem \ref{main1}.

\begin{lemma}[{cf. \cite[Proposition 4]{AKL}}]\label{simplex}
Let $W_\bullet$ be a graded linear series on a smooth projective variety $X$ generated by a base point free linear series $W_1$.
Suppose also that $W_1$ defines a surjective morphism $f \colon X \to \overline{X}$ of projective varieties of the same dimension $n$.
Let $Y_\bullet$ be an admissible flag on $X$ defined by successive intersection of sufficiently general members $E_1, \ldots, E_n$
of $W_1$ ; $Y_i := E_1 \cap \cdots \cap E_i$ for $1 \leq i \leq n-1$ and $Y_n = \{x \}$ is a general point in $X$. Then $\okbd_{Y_\bullet}(W_\bullet)$ is a $n$-dimensional simplex in $\R_{\geq 0}^n$ whose verticies are $0, e_1, \ldots, e_{n-1}, \vol_X(W_\bullet)e_n$.
\end{lemma}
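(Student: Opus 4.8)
\emph{Overall strategy.} The plan is to follow \cite[Proposition 4]{AKL}: transport the computation to the image variety $\overline{X}$ via Lemma~\ref{morokbd}, where $W_\bullet$ becomes (in all large degrees) the complete linear series of a very ample divisor, and then pin down $\okbd_{\overline{Y}_\bullet}$ of that divisor for a flag of general hyperplane sections by proving both inclusions with the simplex $\Sigma$.

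\emph{Reduction to $\overline{X}$.} Let $L$ be the line bundle with $W_1 \subseteq H^0(X,L)$, so $f=\phi_{W_1}\colon X \to \P(W_1)=:\P^N$ has image $\overline{X}$ of dimension $n$; thus $f$ is generically finite and $L=f^*\mc{O}_{\overline{X}}(1)$. Since $W_\bullet$ is generated by $W_1$, each $W_k$ is the image of $S^kW_1 \to H^0(X,kL)$, i.e. the $k$-th graded piece of the homogeneous coordinate ring of $\overline{X}\subseteq\P^N$; putting $A:=\mc{O}_{\overline{X}}(1)$, Serre vanishing gives $W_k=f^*H^0(\overline{X},kA)$ for all $k\gg 0$. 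Hence $W_\bullet$ and the graded linear series $\overline{W}_\bullet$ with $\overline{W}_k:=f^*H^0(\overline{X},kA)$ agree in all large degrees, so they have the same Okounkov body with respect to $Y_\bullet$ and the same volume, and $\vol_X(W_\bullet)=\lim_k \tfrac{n!\,\dim W_k}{k^n}=A^n=\deg\overline{X}=:d$. Moreover the general member $E_i\in W_1$ is $f^*(\overline{X}\cap H_i)$ for a general hyperplane $H_i\subseteq\P^N$, so $Y_i=f^{-1}(\overline{Y}_i)$ with $\overline{Y}_i:=\overline{X}\cap H_1\cap\cdots\cap H_i$, and $\overline{Y}_\bullet:=(f(Y_i))_i$ is an admissible flag of general hyperplane sections on $\overline{X}$ ending at a general point $x$ of the irreducible curve $\overline{Y}_{n-1}$ (irreducibility, and smoothness at $x$, by Bertini and generic smoothness). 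By Lemma~\ref{morokbd} applied to $f$ with $D=A$, it then suffices to show $\okbd_{\overline{Y}_\bullet}(A)=\Sigma$, where $\Sigma:=\{v\in\R^n_{\geq 0}: v_1+\cdots+v_{n-1}+v_n/d\leq 1\}$ is precisely the $n$-simplex with vertices $0,e_1,\dots,e_{n-1},d\,e_n$.

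\emph{The inclusion $\okbd_{\overline{Y}_\bullet}(A)\subseteq\Sigma$.} This is the standard successive-restriction estimate. For $s\in H^0(\overline{X},kA)$ with $\nu_{\overline{Y}_\bullet}(s)=(\nu_1,\dots,\nu_n)$, since each $\overline{Y}_j$ is the restriction to $\overline{Y}_{j-1}$ of a member of $|A|$, the effective divisor obtained after the $i$-th restriction step lies in $\bigl|(k-\nu_1-\cdots-\nu_i)\,A|_{\overline{Y}_i}\bigr|$; effectivity forces $\nu_1+\cdots+\nu_i\leq k$ at every stage, and at the final stage a section of $(k-\nu_1-\cdots-\nu_{n-1})A|_{\overline{Y}_{n-1}}$ on the degree-$d$ curve $\overline{Y}_{n-1}$ vanishes at $x$ to order at most $(k-\nu_1-\cdots-\nu_{n-1})\,d$. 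Dividing by $k$ gives $\tfrac1k\nu_{\overline{Y}_\bullet}(s)\in\Sigma$.

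\emph{The inclusion $\Sigma\subseteq\okbd_{\overline{Y}_\bullet}(A)$ and conclusion.} By convexity it is enough to place each vertex of $\Sigma$ in the Okounkov body. The vertex $0$ is $\nu_{\overline{Y}_\bullet}$ of a general section of $A$. For $1\leq i\leq n-1$, taking $s_i\in H^0(\overline{X},A)$ with $\Div(s_i)=\overline{X}\cap H_i$ and using that distinct general hyperplane sections --- and their iterated restrictions to the $\overline{Y}_j$ --- share no common component, a direct computation gives $\nu_{\overline{Y}_\bullet}(s_i)=e_i$. For the vertex $d\,e_n$: for $k\gg 0$ the restriction $H^0(\overline{X},kA)\to H^0(\overline{Y}_{n-1},kA|_{\overline{Y}_{n-1}})$ is surjective (Serre vanishing), and on the irreducible projective curve $\overline{Y}_{n-1}$ of degree $d$ (smooth at the general point $x$) Riemann--Roch produces sections $t_k$ of $kA|_{\overline{Y}_{n-1}}$ with $\ord_x(t_k)/k\to d$; lifting $t_k$ to $s_k\in H^0(\overline{X},kA)$ and using $s_k|_{\overline{Y}_{n-1}}=t_k\neq 0$ forces $\nu_i(s_k)=0$ for $i<n$ and $\nu_n(s_k)=\ord_x(t_k)$, so $\tfrac1k\nu_{\overline{Y}_\bullet}(s_k)\to d\,e_n$. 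Since the Okounkov body is closed and convex, $\Sigma\subseteq\okbd_{\overline{Y}_\bullet}(A)$, whence $\okbd_{\overline{Y}_\bullet}(A)=\Sigma$; transporting back through Lemma~\ref{morokbd} proves the lemma, with $d=\vol_X(W_\bullet)$ as computed above. The main obstacle is conceptual rather than technical: one must realize that the last coordinate of the simplex is governed by $\deg\overline{X}=\vol_X(W_\bullet)$ and not by $L^n$ (which is strictly larger when $\deg f>1$), which is exactly what forces the reduction to $\overline{X}$ --- where the relevant curve $\overline{Y}_{n-1}$ has the correct degree $d$ --- instead of working with the valuations $\ord_{Y_i}$ directly on $X$. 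The remaining ingredients (Bertini irreducibility of the iterated hyperplane sections, the Serre-vanishing lift, and the Riemann--Roch estimate on a possibly singular curve) are routine.
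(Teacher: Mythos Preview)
Your proof is correct and follows exactly the paper's approach: reduce to the very ample divisor $A=\mc{O}_{\overline{X}}(1)$ on $\overline{X}$ via Lemma~\ref{morokbd}, then identify $\okbd_{\overline{Y}_\bullet}(A)$ as the stated simplex. The paper simply cites \cite[Proposition~4]{AKL} for this last identification, whereas you spell out that argument in full (and are in fact more careful than the paper about the point that $W_k=f^*H^0(\overline{X},kA)$ need only hold for $k\gg 0$).
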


\begin{proof}
There exists a very ample $\Z$-divisor $D$ on $\overline{X}$ so that we may assume $W_k=H^0(\overline{X}, kD) \subseteq H^0(X, kf^*D)$ for any integer $k \geq 0$. By the genericity assumption on $E_j$ for defining $Y_i$, we may assume that
$$
\overline{Y}_\bullet : \overline{X}=f(Y_0)\supseteq f(Y_1)\supseteq \cdots  \supseteq f(Y_{n-1}) \supseteq f(Y_n)
$$
is an admissible flag on $\overline{X}$. By Lemma \ref{morokbd}, $\okbd_{Y_\bullet}(W_\bullet)=\okbd_{\overline{Y}_\bullet}(D)$. Note that $D^n=\vol_{\overline{X}}(D)=\vol_X(W_\bullet)$. By applying \cite[Proposition 4]{AKL} to $\okbd_{\overline{Y}_\bullet}(D)$, we obtain the assertion.
\end{proof}

We now show the rational polyhedrality of $\okval_{Y_\bullet}(D)$.

\begin{theorem}\label{ratsimval}
Let $D$ be an effective $\Q$-divisor on $X$ with finitely generated section ring $R(X,D)$. Then there exists an admissible flag $Y_\bullet$ on $X$ containing a Nakayama subvariety of $D$ such that $\okval_{Y_\bullet}(D)$ is a rational simplex in $\{0\}^{n-\kappa(D)}\times\R^{\kappa(D)}$ of dimension $\kappa(D)$.
\end{theorem}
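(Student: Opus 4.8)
The plan is to reduce to the situation of Lemma \ref{simplex} by passing to a birational model on which the linear series defining $D$ becomes base point free and, after restricting to a sufficiently general complete intersection subvariety, induces a generically finite morphism onto its image. First I would replace $D$ by its positive part $P_s$ in the $s$-decomposition: since $R(X,D)\simeq R(X,P_s)$ by \cite[Proposition 4.8]{P} and $\okval_{Y_\bullet}(D)=\okval_{Y_\bullet}(P_s)$ whenever $Y_n$ is general (Lemma \ref{okbdzd}), it suffices to work with $P_s$, which is semiample on a suitable model. More precisely, by finite generation of $R(X,D)$, for a sufficiently divisible $m>0$ the base ideal of $|mD|$ can be resolved by a birational morphism $g\colon X'\to X$ so that $g^*(mD)=M+F$ with $M$ base point free and $F$ the fixed part; set $W_k:=$ image of $H^0(X',kM/m)$, the graded linear series generated in degree one by $W_1=|M|$. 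Then $M$ defines a morphism $h\colon X'\to Z$ with $\dim h(X')=\kappa(D)=:\kappa$.

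The second step is to cut down to dimension $\kappa$. Choosing $\kappa$ sufficiently general members of $|M|$ is not quite the setup of Lemma \ref{simplex} because $h$ is not finite onto its image; instead I would first restrict $M$ to a general complete intersection subvariety $U':=E_1\cap\cdots\cap E_{n-\kappa}$ of members $E_i$ of a very ample linear series on $X'$ (not of $|M|$), chosen so that $U'$ is smooth, maps isomorphically under $g$ onto a Nakayama subvariety $U=g(U')$ of $D$ (using Remark \ref{gensub} and the fact that a general such $U'$ avoids the relevant base loci so that the restriction maps $H^0(X',kM)\to H^0(U',kM|_{U'})$ are injective), and so that $M|_{U'}$ defines a generically finite — hence, after a further blow-up, a genuine surjective — morphism $U''\to \overline{U}$ of $\kappa$-dimensional projective varieties. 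The key point here is that $\vol_{\overline{U}}(M|_{U'})=\vol_{X|U}(P_s)=\vol_{X|U}(D)$, and that the graded linear series $W_\bullet|_{U'}$ restricting $W_\bullet$ to $U'$ is still generated in degree one by a base point free $W_1|_{U'}$. One must also arrange that the flag $Y_\bullet$ on $X'$ with $Y_i=E_1\cap\cdots\cap E_i$ for $i\le n-\kappa$, continued inside $U''$ by $Y_{n-\kappa+j}=$ intersections of general members of $|M|_{U''}|$, and $Y_n$ a general point, pushes forward under $g$ to an admissible flag on $X$ containing $U$ — this is where I invoke Lemma \ref{okbdbir}.

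With this setup, Theorem \ref{newtheorem} (applied on $X'$, using $Y_{n-\kappa}=U'\not\subseteq\bp$ of the relevant big class after restriction, or rather its analogue for the graded linear series) identifies $\okval_{Y_\bullet}(W_\bullet)\cap(\{0\}^{n-\kappa}\times\R^{\kappa})$ with the Okounkov body of $W_\bullet|_{U''}$ computed by the flag inside $U''$; since the latter flag is cut out by general members of the base point free degree-one generator, Lemma \ref{simplex} shows this body is the rational $\kappa$-simplex with vertices $0,e_1,\dots,e_{\kappa-1},\vol_{\overline{U}}(M|_{U'})e_\kappa$. Finally, $\okval_{Y_\bullet}(W_\bullet)=\okval_{Y_\bullet}(g^*D)=\okval_{g(Y_\bullet)}(D)$ by the construction of the valuative Okounkov body (using $R(X',M/m)\simeq R(X,D)$ and Lemma \ref{okbdbir}), and by Theorem \ref{geomcrit}(1) the fact that $U=g(Y_{n-\kappa})$ is a Nakayama subvariety forces $\okval_{g(Y_\bullet)}(D)\subseteq\{0\}^{n-\kappa}\times\R^{\kappa}$, so the whole body equals that simplex. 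The main obstacle I anticipate is the bookkeeping in the cutting-down step: ensuring simultaneously that the general complete intersection $U'$ is a Nakayama subvariety, that $g|_{U'}$ is an isomorphism onto its image, that $M|_{U'}$ (after a blow-up) gives an honest surjective morphism of $\kappa$-dimensional varieties so Lemma \ref{simplex} applies, and that all these choices are compatible with a single admissible flag whose pushforward to $X$ is still admissible — the volume identity $\vol_{\overline U}(M|_{U'})=\vol_{X|U}(D)$ must survive each of these modifications.
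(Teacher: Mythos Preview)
Your proposal follows essentially the same route as the paper: resolve the base locus on a birational model so that $f^*(mD)=M+F$ with $M$ base point free, cut down by general (very) ample divisors to a $\kappa(D)$-dimensional Nakayama subvariety $U$, complete the flag inside $U$ by general members of the restricted base-point-free system, apply Lemma \ref{simplex} to the restricted graded linear series, and push forward via Lemma \ref{okbdbir}. The preliminary reduction to $P_s$ and the ``further blow-up'' of $U'$ you mention are unnecessary---the paper simply observes that $\phi|_U\colon U\to Z$ is already a surjective morphism of $\kappa(D)$-dimensional varieties because $U$ is a general complete intersection---but they do no harm.

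The one place where your argument is genuinely weaker: to identify $\okval_{Y'_\bullet}(f^*D)$ with the Okounkov body of the restricted graded linear series on $U$, you invoke Theorem \ref{newtheorem} ``or rather its analogue for the graded linear series.'' But Theorem \ref{newtheorem} is stated only for \emph{big} divisors, and $f^*D$ is not big when $\kappa(D)<n$, so this step is not justified as written. The paper avoids this entirely by citing \cite[Remark 3.11]{CHPW1}, which directly gives $\okval_{Y'_\bullet}(f^*D)=\okbd_{Y'_{n-\kappa(D)\bullet}}(W_\bullet)$ once $U=Y'_{n-\kappa(D)}$ is a Nakayama subvariety. With that identity in hand, your separate appeal to Theorem \ref{geomcrit}(1) to force the body into the slice becomes redundant, and the volume identity you flag as a potential obstacle is irrelevant to concluding rational polyhedrality.
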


\begin{proof}
Let $m>0$ be a sufficiently divisible and large integer such that $mD$ is a $\Z$-divisor and the section ring $R(X, mD)$ is generated by $H^0(X, mD)$.
We take a log resolution $f \colon \widetilde{X} \to X$ of the base ideal $\frak{b}(|mD|)$ so that we obtain a decomposition $f^*(mD)=M+F$ into a base point free divisor $M$ and the fixed part $F$ of $|f^*(mD)|$. Note that the morphism $\phi \colon \tilde{X} \to Z$ given by $|M|$ is the Iitaka fibration of $f^*D$.
Let $A_1, \ldots, A_{n-\kappa(D)}$ be sufficiently general ample divisors on $\tilde{X}$ such that each $Y_i':=A_1 \cap \cdots \cap A_i$ for $1 \leq i \leq n-\kappa(D)$ is a smooth irreducible subvariety of dimension $n-i$. By Remark \ref{gensub}, $U:=Y_{n-\kappa(D)}'$ is a Nakayama subvariety of $f^*D$.
Let $W_k$ be the image of the natural injective map $H^0(\widetilde{X}, kf^*(mD)) \to H^0(U, kf^*(mD)|_U)$ for any integer $k \geq 0$. Then $W_\bullet$ is a graded linear series on $U$ generated by $W_1$.
Note that $\phi|_U \colon U \to Z$ is a surjective morphism of projective varieties of the same dimension $\kappa(D)$ defined by $W_1$.
Now take sufficiently general members $E_1, \ldots, E_{\kappa(D)}$ of $W_1$ such that $Y_{n-\kappa(D)+i}':=E_1 \cap \cdots \cap E_i$ for $1 \leq i \leq \kappa(D)-1$ is a smooth irreducible subvariety of $X$ (and $U$) of dimension $\kappa(D)-i$, and $Y_n'=\{x\}$ where $x$ is a general point in $U$. In particular, $Y_\bullet': Y'_0\supseteq \cdots \supseteq Y'_n$ is an admissible flag on $\widetilde{X}$ and the partial flag $Y'_{n-\kappa(D)\bullet}$ is an admissible flag on $U$.
Then by Lemma \ref{simplex}, $\okbd_{Y'_{n-\kappa(D)\bullet}}(W_\bullet)$ is a $\kappa(D)$-dimensional simplex.
Recall from \cite[Remark 3.11]{CHPW1} that $\okval_{Y'_\bullet}(f^*D)=\okbd_{Y'_{n-\kappa(D)\bullet}}(W_\bullet)$.
Furthermore, by the genericity assumption on $Y_\bullet'$, we can assume that $Y_\bullet : f(Y'_0)\supseteq \cdots\supseteq  f(Y'_n)$ is an admissible flag on $X$ and $f(Y_{n-\kappa(D)}')$ is a Nakayama subvariety of $D$.
By Lemma \ref{okbdbir}, $\okval_{Y_\bullet}(D)=\okval_{Y'_\bullet}(f^*D)$, and hence, $\okval_{Y_\bullet}(D)$ is a rational simplex. Finally, by Theorem \ref{chpwmain} (1), $\okval_{Y_\bullet}(D)$ is contained in $\{0\}^{n-\kappa(D)}\times\R^{\kappa(D)}$ and is of dimension $\kappa(D)$.
\end{proof}

\begin{corollary}\label{ratpolval}
Let  $D$ be an effective $\Q$-divisor on $X$ which admits the birational good Zariski decomposition. Then there exists an admissible flag $Y_\bullet$ on $X$ containing a Nakayama subvariety of $D$ such that $\okval_{Y_\bullet}(D)$ is a rational simplex in $\{0\}^{n-\kappa(D)}\times\R^{\kappa(D)}$ of dimension $\kappa(D)$.
\end{corollary}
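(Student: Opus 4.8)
The plan is to deduce Corollary \ref{ratpolval} directly from Theorem \ref{ratsimval} by observing that the hypothesis is stronger than what Theorem \ref{ratsimval} requires. Indeed, Theorem \ref{ratsimval} asks only that $R(X,D)$ be finitely generated, whereas by Proposition \ref{zdabfg} a pseudoeffective $\Q$-divisor admitting the birational good Zariski decomposition is automatically abundant \emph{and} has finitely generated section ring. So the strategy is: first invoke Proposition \ref{zdabfg} to extract the finite generation of $R(X,D)$ from the assumption that $D$ admits the birational good Zariski decomposition; then apply Theorem \ref{ratsimval} verbatim.

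Concretely, I would argue as follows. Since $D$ admits the birational good Zariski decomposition, Proposition \ref{zdabfg} tells us that $D$ is abundant and $R(X,D)$ is finitely generated. In particular $R(X,D)$ is finitely generated, so Theorem \ref{ratsimval} applies to $D$: there exists an admissible flag $Y_\bullet$ on $X$ containing a Nakayama subvariety of $D$ such that $\okval_{Y_\bullet}(D)$ is a rational simplex in $\{0\}^{n-\kappa(D)}\times\R^{\kappa(D)}$ of dimension $\kappa(D)$. This is exactly the assertion of the corollary, so we are done.

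Since this is essentially a one-line implication, I do not anticipate any real obstacle; the only point to be careful about is that Theorem \ref{ratsimval} is stated for effective $\Q$-divisors, which matches the hypothesis of the corollary (and effectivity — indeed $|D|_\R \ne \emptyset$ — is implicit in the existence of the good Zariski decomposition, since $N$ is effective and $P$ is semiample). One might alternatively want to record that the good Zariski decomposition hypothesis is genuinely weaker input than needed — the abundance part is not used in this particular corollary — but this is a remark rather than a step in the proof. The substantive content has already been carried out in Theorem \ref{ratsimval} and Proposition \ref{zdabfg}, so the proof of the corollary is just the composition of these two results.

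\begin{proof}
Since $D$ admits the birational good Zariski decomposition, Proposition \ref{zdabfg} implies that $R(X,D)$ is finitely generated. The assertion now follows immediately from Theorem \ref{ratsimval}.
\end{proof}
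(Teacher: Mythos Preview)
Your proof is correct and is essentially identical to the paper's own proof: both invoke Proposition \ref{zdabfg} to extract finite generation of $R(X,D)$ and then apply Theorem \ref{ratsimval}.
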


\begin{proof}
By Proposition \ref{zdabfg}, $D$ has a finitely generated section ring. Then the assertion now follows from Theorem \ref{ratsimval}.
\end{proof}

We now turn to the limiting Okounkov body case.

\begin{lemma}\label{oklimnef}
Let $P$ be a nef divisor on $X$, and consider an admissible flag $Y_\bullet$ on $X$ containing a smooth positive volume subvariety $V=Y_{n-\kappanu(D)}$ of $P$. Then $\oklim_{Y_\bullet}(P)=\okbd_{Y_{n-\kappanu(P)\bullet}}(P|_V)$.
\end{lemma}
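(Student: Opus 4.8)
The statement asserts that for a nef divisor $P$ and an admissible flag $Y_\bullet$ whose $(n-\kappanu(P))$-codimensional member $V=Y_{n-\kappanu(P)}$ is a smooth positive volume subvariety of $P$, one has $\oklim_{Y_\bullet}(P)=\okbd_{Y_{n-\kappanu(P)\bullet}}(P|_V)$.

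The plan is to unwind both sides via perturbation by an ample divisor and compare them slice by slice using Theorem \ref{newtheorem}. First I would fix an ample $\Z$-divisor $A$ on $X$ and write, by definition of the limiting body, $\oklim_{Y_\bullet}(P)=\bigcap_{\epsilon>0}\okbd_{Y_\bullet}(P+\epsilon A)$. Since $V$ is a positive volume subvariety of $P$, Theorem \ref{chpwmain}(2) gives $\oklim_{Y_\bullet}(P)\subseteq\{0\}^{n-\kappanu(P)}\times\R^{\kappanu(P)}$ with $\dim\oklim_{Y_\bullet}(P)=\kappanu(P)$, and Lemma \ref{notinbm} gives $V\not\subseteq\bm(P)$, hence $V=Y_{n-\kappanu(P)}\not\subseteq\bp(P+\epsilon A)$ for every $\epsilon>0$. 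So Theorem \ref{newtheorem} applies to $P+\epsilon A$ with $k=\kappanu(P)$, yielding
$$
\okbd_{Y_{n-\kappanu(P)\bullet}}(P+\epsilon A)=\okbd_{Y_\bullet}(P+\epsilon A)\cap(\{0\}^{n-\kappanu(P)}\times\R_{\geq0}^{\kappanu(P)}).
$$
Intersecting over $\epsilon>0$ and using that $\oklim_{Y_\bullet}(P)$ already lies in $\{0\}^{n-\kappanu(P)}\times\R^{\kappanu(P)}$, I get $\oklim_{Y_\bullet}(P)=\bigcap_{\epsilon>0}\okbd_{Y_{n-\kappanu(P)\bullet}}(P+\epsilon A)$, and the right-hand side is the limiting Okounkov body $\oklim_{Y_{n-\kappanu(P)\bullet}}(P|_V)$ computed on $V$ with respect to the partial flag. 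Thus the problem reduces to showing $\oklim_{Y_{n-\kappanu(P)\bullet}}(P|_V)=\okbd_{Y_{n-\kappanu(P)\bullet}}(P|_V)$ on $V$, i.e. that the limiting and ordinary Okounkov bodies of $P|_V$ agree.

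For that reduction I would use that $P|_V$ is nef on $V$ (restriction of a nef divisor), and that $\vol_V(P|_V)=\vol_{X|V}^+(P)>0$ because $V$ is a positive volume subvariety — here I would invoke the chain of inequalities $\vol_{X|V}(P)\leq\vol_{X|V}^+(P)\leq\vol_V(P|_V)$ recalled in the preliminaries together with the fact that for the nef divisor $P$ one actually has $\vol_{X|V}^+(P)=\vol_V(P|_V)$ (nef divisors restrict with no loss of positive volume). So $P|_V$ is a nef and big divisor on $V$; for a big divisor the limiting Okounkov body coincides with the usual one, so $\oklim_{Y_{n-\kappanu(P)\bullet}}(P|_V)=\okbd_{Y_{n-\kappanu(P)\bullet}}(P|_V)$, which closes the argument.

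The main obstacle I anticipate is the bookkeeping in the reduction step: one must be careful that the partial flag $Y_{n-\kappanu(P)\bullet}$ really is admissible on the smooth variety $V$ (which is exactly why smoothness of $V$ is hypothesized), that each perturbed body $\okbd_{Y_{n-\kappanu(P)\bullet}}(P+\epsilon A)$ is genuinely computed as an Okounkov body on $V$ rather than merely as an abstract slice, and that passing to the intersection over $\epsilon$ commutes with the identification of the slice with $\oklim$ on $V$ — this last point is where the hypothesis $V\not\subseteq\bm(P)$, i.e. that $Y_n$ is general for $P|_V$ in the sense of the earlier remark, is used to guarantee the limiting body on $V$ is well-behaved and independent of $A$. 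The positivity $\vol_V(P|_V)>0$, needed to invoke bigness of $P|_V$, is the only genuinely geometric input and it is handed to us by the positive volume subvariety hypothesis.
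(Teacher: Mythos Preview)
Your argument is correct but takes a different route from the paper. The paper's proof is a short volume comparison: the inclusion $\oklim_{Y_\bullet}(P)\supseteq\okbd_{Y_{n-\kappanu(P)\bullet}}(P|_V)$ is immediate from the definitions, and then it suffices to show the two bodies have the same $\kappanu(P)$-dimensional Euclidean volume. By Theorem~\ref{chpwmain}(2) and \cite[Theorem A]{lm-nobody} this amounts to $\vol_{X|V}^+(P)=\vol_V(P|_V)$, which the paper checks directly: for each $\epsilon>0$ the divisor $P+\epsilon A$ is ample, so $\vol_{X|V}(P+\epsilon A)=\vol_V((P+\epsilon A)|_V)$, and one passes to the limit by continuity.

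Your approach instead slices via Theorem~\ref{newtheorem} and identifies the intersection over $\epsilon$ with $\oklim_{Y_{n-\kappanu(P)\bullet}}(P|_V)$, then reduces to bigness of $P|_V$. This works, but note two points. First, the symbol $\okbd_{Y_{n-\kappanu(P)\bullet}}(P+\epsilon A)$ in Theorem~\ref{newtheorem} denotes the Okounkov body of the \emph{restricted} linear series on $V$, not of the divisor $(P+\epsilon A)|_V$; you need the extra observation that these coincide because $P+\epsilon A$ is ample, so $H^0(X,m(P+\epsilon A))\to H^0(V,m(P+\epsilon A)|_V)$ is surjective for $m\gg 0$. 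Second, you only need the inequality $\vol_V(P|_V)\geq\vol_{X|V}^+(P)>0$ from the preliminaries to conclude $P|_V$ is big; the equality $\vol_{X|V}^+(P)=\vol_V(P|_V)$ you mention is precisely what the paper proves, and is not needed in your reduction. Your route trades the paper's clean volume trick for heavier machinery (Theorem~\ref{newtheorem}), but it does yield the equality constructively rather than by squeezing volumes.
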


\begin{proof}
By definition, it is clear that $\oklim_{Y_\bullet}(P) \supseteq \okbd_{Y_{n-\kappanu(P)\bullet}}(P|_V)$. Thus it is sufficient to show that their Euclidean volumes in $\R^{\kappanu(P)}$ are equal,  i.e.,$ \vol_{\R^{\kappanu(P)}}(\oklim_{Y_\bullet}(P))=\vol_{\R^{\kappanu(P)}}(\okbd_{Y_{n-\kappanu(P)\bullet}}(P|_V))$, or equivalently, $\vol_{X|V}^+(P)=\vol_{V}(P|_V)$ by Theorem \ref{chpwmain}.
Fix an ample divisor $A$ on $X$.
Since $P+\eps A$ is ample for any $\eps >0$, it follows that $\vol_{X|V}(P+\eps A)=\vol_V((P+\eps A)|_V)$. By the continuity of the volume function, we obtain
$$
\vol_{X|V}^+(P)=\lim_{\eps \to 0+}\vol_{X|V}(P+\eps A)=\lim_{\eps \to 0+} \vol_V((P+\eps A)|_V)=\vol_V(P|_V),
$$
so we complete the proof.
\end{proof}

We next obtain an analogous result on the rational polyhedrality of $\oklim_{Y_\bullet}(D)$.

\begin{theorem}\label{ratsimlim}
Let $D$ be a pseudoeffective $\Q$-divisor on $X$ which admits the birational good Zariski decomposition. Then there exists an admissible flag $Y_\bullet$ on $X$ containing a positive volume subvariety of $D$ such that $\oklim_{Y_\bullet}(D)$ is a rational simplex in $\{0\}^{n-\kappanu(D)}\times\R^{\kappanu(D)}$ of dimension $\kappanu(D)$.
\end{theorem}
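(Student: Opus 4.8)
The strategy is to reduce the statement to the nef case on a birational model, where Lemma \ref{oklimnef} and Lemma \ref{simplex} can be combined. By hypothesis there is a birational morphism $f \colon \widetilde{X} \to X$ from a smooth projective variety such that $f^*D = P+N$ is the good Zariski decomposition; by Proposition \ref{qgoodzd}, $P$ and $N$ are $\Q$-divisors and $P$ is semiample. The plan is first to use Lemma \ref{okbdbir} to replace $D$ by $f^*D$: any admissible flag $\widetilde{Y}_\bullet$ on $\widetilde{X}$ with $\widetilde{Y}_n$ general that pushes forward to an admissible flag $Y_\bullet$ on $X$ satisfies $\oklim_{\widetilde{Y}_\bullet}(f^*D) = \oklim_{Y_\bullet}(D)$, and a positive volume subvariety of $f^*D$ pushes forward to one of $D$ (this last point, and the preservation of the genericity of the flag, needs a short verification along the lines of Remark \ref{gensub} and the arguments in the proof of Theorem \ref{ratsimval}). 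Next, by Lemma \ref{okbdzd}, $\oklim_{\widetilde{Y}_\bullet}(f^*D) = \oklim_{\widetilde{Y}_\bullet}(P_\sigma)$, and since the good Zariski decomposition is the divisorial Zariski decomposition we have $P_\sigma = P$. So it suffices to produce an admissible flag on $\widetilde{X}$ with respect to which $\oklim_{\widetilde{Y}_\bullet}(P)$ is a rational simplex.

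\textbf{The nef case.} Now $P$ is a semiample $\Q$-divisor on $\widetilde{X}$; write $\kappa := \kappanu(P) = \kappanu(D)$ (equality of the two numerical Iitaka dimensions holds because $N$ is fixed in the divisorial Zariski decomposition, so $\kappanu(P_\sigma) = \kappanu(D)$). After passing to a sufficiently divisible multiple $mP$, which is base point free, we get a morphism $\phi \colon \widetilde{X} \to Z$; this is the Iitaka/Nakayama fibration, so $\dim Z = \kappa$. Choose sufficiently general ample divisors $A_1, \ldots, A_{n-\kappa}$ on $\widetilde{X}$ and set $Y'_i := A_1 \cap \cdots \cap A_i$ for $1 \le i \le n-\kappa$; by Remark \ref{gensub}, $V := Y'_{n-\kappa}$ is a smooth positive volume subvariety of $P$. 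Restricting, $\phi|_V \colon V \to Z$ is a surjective morphism of projective varieties of the same dimension $\kappa$, and it is defined by the base point free linear series $W_1$ on $V$ obtained by restricting $|mP|$; moreover $W_\bullet$ (with $W_k$ the restriction of $|kmP|$, or rather the image of $H^0(\widetilde{X}, kmP)$) is generated by $W_1$. Continue the flag inside $V$ by taking sufficiently general members $E_1, \ldots, E_\kappa$ of $W_1$, setting $Y'_{n-\kappa+i} := E_1 \cap \cdots \cap E_i$ and $Y'_n = \{x\}$ a general point. By Lemma \ref{simplex}, $\okbd_{Y'_{n-\kappa\bullet}}(W_\bullet)$ is a $\kappa$-dimensional simplex with rational vertices. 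On the other hand, $\okbd_{Y'_{n-\kappa\bullet}}(W_\bullet) = \okbd_{Y'_{n-\kappa\bullet}}((mP)|_V)$ up to rescaling by $1/m$ in the last coordinate direction — since $W_k = H^0(Z, \cdot)$ pulled back — so this is $\okbd_{Y'_{n-\kappa\bullet}}(P|_V)$ after the obvious $\Q$-linear rescaling, still a rational simplex. Finally, by Lemma \ref{oklimnef} applied to the nef divisor $P$, we get $\oklim_{Y'_\bullet}(P) = \okbd_{Y'_{n-\kappa\bullet}}(P|_V)$, hence $\oklim_{Y'_\bullet}(P)$ is a rational simplex of dimension $\kappa$, contained in $\{0\}^{n-\kappa} \times \R^{\kappa}$ by Theorem \ref{chpwmain}(2).

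\textbf{Descent and conclusion.} Pushing forward via Lemma \ref{okbdbir}: because all the $A_j$ and $E_i$ are chosen sufficiently general, $Y_\bullet := f(Y'_\bullet)$ is an admissible flag on $X$, $Y_n$ is a general point, $f(V)$ is a positive volume subvariety of $D$, and $\oklim_{Y_\bullet}(D) = \oklim_{Y'_\bullet}(f^*D) = \oklim_{Y'_\bullet}(P)$. Therefore $\oklim_{Y_\bullet}(D)$ is a rational simplex in $\{0\}^{n-\kappanu(D)} \times \R^{\kappanu(D)}$ of dimension $\kappanu(D)$, as required.

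\textbf{Main obstacle.} The conceptual heart of the argument is the reduction to the nef (in fact semiample) positive part via Lemma \ref{okbdzd}, together with the observation that for a nef divisor the limiting Okounkov body coincides with an honest Okounkov body of the restriction to a positive volume subvariety (Lemma \ref{oklimnef}); once that is in hand, the rational polyhedrality is exactly \cite[Proposition 4]{AKL} repackaged through Lemma \ref{simplex}. The point requiring the most care is the bookkeeping of genericity when passing between $\widetilde{X}$ and $X$ and when cutting the flag first by general ample divisors on $\widetilde{X}$ (to reach a positive volume subvariety $V$) and then by general members of $|mP|$ inside $V$ (to apply Lemma \ref{simplex}): one must ensure simultaneously that each intermediate $Y'_i$ is smooth irreducible of the right dimension, that $\phi|_V$ remains a generically finite — here, equidimensional — surjection, that the pushed-forward flag stays admissible, and that $f$ is an isomorphism near the general point $x$. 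This is routine but is where all the hypotheses ("sufficiently general", "sufficiently divisible $m$") get used.
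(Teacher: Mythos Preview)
Your overall architecture is exactly the paper's: pass to $\widetilde{X}$ via Lemma~\ref{okbdbir}, strip off $N$ via Lemma~\ref{okbdzd}, cut down to a positive volume subvariety $V$ by general ample divisors, invoke Lemma~\ref{oklimnef} to identify $\oklim_{Y'_\bullet}(P)$ with $\okbd_{Y'_{n-\kappa\bullet}}(P|_V)$, and then show the latter is a rational simplex. The paper does the last step by citing \cite[Theorem~7]{AKL} applied directly to the big semiample divisor $P|_V$ on $V$.

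There is, however, a genuine gap in your version of that last step. You build the flag inside $V$ from general members of the \emph{restricted} linear series $W_1=\operatorname{im}\big(H^0(\widetilde{X},mP)\to H^0(V,mP|_V)\big)$ and then assert that $\okbd_{Y'_{n-\kappa\bullet}}(W_\bullet)=\okbd_{Y'_{n-\kappa\bullet}}((mP)|_V)$ ``up to rescaling by $1/m$ in the last coordinate.'' This identification is not correct. The morphism $\phi|_V\colon V\to Z$ is generically finite of degree $d=A_1\cdots A_{n-\kappa}\cdot F$ (the intersection with a general fiber $F$ of $\phi$), which is typically $>1$. Consequently $\vol_V((mP)|_V)=d\cdot\vol_Z(H)=d\cdot\vol_V(W_\bullet)$, so the full Okounkov body $\okbd_{Y'_{n-\kappa\bullet}}((mP)|_V)$ has strictly larger Euclidean volume than $\okbd_{Y'_{n-\kappa\bullet}}(W_\bullet)$ and cannot coincide with it (nor is the discrepancy a factor of $m$). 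Since Lemma~\ref{oklimnef} hands you $\okbd_{Y'_{n-\kappa\bullet}}(P|_V)$ and not $\okbd(W_\bullet)$, your simplex computation for $W_\bullet$ does not conclude the argument.

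The fix is straightforward and brings you back to the paper's proof: work with the \emph{full} linear system on $V$. The divisor $P|_V$ is big (this uses \cite[Theorem~2.18]{CHPW1}, which you should cite rather than leave implicit) and semiample, so $R(V,P|_V)$ is finitely generated; choose $m$ so that $R(V,mP|_V)$ is generated in degree one and take $E_1,\ldots,E_{\kappa}$ general in $|mP|_V|$. Then either cite \cite[Theorem~7]{AKL} directly, or apply Lemma~\ref{simplex} to the full graded series $W'_k:=H^0(V,kmP|_V)$ (now genuinely generated by $W'_1$), obtaining that $\okbd_{Y'_{n-\kappa\bullet}}(mP|_V)$ is a rational simplex; finally $\okbd_{Y'_{n-\kappa\bullet}}(P|_V)=\tfrac{1}{m}\okbd_{Y'_{n-\kappa\bullet}}(mP|_V)$ by homogeneity (uniform scaling in all coordinates, not just the last).
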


\begin{proof}
Let $f \colon \widetilde{X} \to X$ be a birational morphism of smooth projective varieties of dimension $n$ such that $f^*D=P+N$ is the good Zariski decomposition. Let $A_1, \ldots, A_{n-\kappanu(D)}$ be sufficiently general ample divisors on $\tilde{X}$ such that each $Y_i':=A_1 \cap \cdots \cap A_i$ for $1 \leq i \leq n-\kappanu(D)$ is a smooth irreducible subvariety of dimension $n-i$. By Remark \ref{gensub}, $V:=Y_{n-\kappanu(D)}'$ is a positive volume subvariety of $f^*D$.
By \cite[Theorem 2.18]{CHPW1}, $P|_V$ is big, and $mP|_V$ on $V$ is base point free for a sufficiently divisible and large integer $m>0$.
Let $E_1, \ldots, E_{\kappanu(D)-1} \in |mP|_V|$ be general members such that each $Y_{n-\kappanu(D)+i}':=E_1 \cap \cdots \cap E_i$ for $1 \leq i \leq \kappanu(D)-1$ is a smooth irreducible subvariety of $X$ of dimension $\kappanu(D)-i$, and $Y'_n:=\{ x\}$ where $x$ is a general point in $V$.
Then $Y'_\bullet : \widetilde{X}=Y'_0\supseteq \cdots\supseteq Y'_n$ is an admissible flag on $\widetilde{X}$. By \cite[Theorem 7]{AKL}, $\okbd_{Y'_{n-\kappanu(D)\bullet}}(P|_V)$ is a $\kappanu(D)$-dimensional simplex.
By Lemma \ref{oklimnef}, $\oklim_{Y'_\bullet}(P)=\okbd_{Y'_{n-\kappanu(D)\bullet}}(P|_V)$, and by Lemma \ref{okbdzd}, $\oklim_{Y'_\bullet}(f^*D)=\oklim_{Y'_\bullet}(P)$. By the genericity assumption on $Y'_\bullet$, we can assume that $Y_\bullet : f(Y'_0) \supseteq \cdots\supseteq f(Y'_n)$ is an admissible flag on $X$ and $f(Y_{n-\kappanu(D)}')$ is a positive volume subvariety of $D$.
By Lemma \ref{okbdbir}, we obtain $\oklim_{Y_\bullet}(D)=\oklim_{Y'_\bullet}(f^*D)$, and hence, $\oklim_{Y_\bullet}(D)$ is a rational simplex. Finally, by Theorem \ref{chpwmain}, $\oklim_{Y_\bullet}(D)$ is in $\{0\}^{n-\kappanu(D)}\times\R^{\kappanu(D)}$ and of dimension $\kappanu(D)$.
\end{proof}

\begin{remark}\label{ratrem}
The problem of the  rational polyhedrality of Okounkov body is not yet fully understood. It was shown in \cite[Corollary 13]{AKL} and \cite[Theorems 1.1 and 4.17]{CPW} that on a smooth projective surface, there always exists an admissible flag with respect to which the Okounkov body of any $\Q$-divisor is a rational polytope.
Thus, in particular, even if a pseudoeffective $\Q$-divisor is not abundant or does not have finitely generated section ring, the associated Okounkov body can still be a rational polytope with respect to some admissible flag.
On the other hand,  even when the given variety is  a Mori dream space, the Okounkov body can be non-polyhedral for some admissible flag (see \cite[Section 3]{KLM}).
\end{remark}

\end{document}